\newcommand{\keywordsname}{Key words}
\newcommand{\keywords}[1]{%
\def\thekeywords{#1}%
\begin{@bstr@ctlist}
\hspace*{\abstitleskip}{\abstractnamefont\keywordsname\@bslabeldelim}\abstracttextfont\
#1%
\par\end{@bstr@ctlist}
}
\newcommand{\subjclassname}{Mathematics subject classification}
\newcommand{\subjclass}[2][2010]{%
\begin{@bstr@ctlist}
\hspace*{\abstitleskip}{\abstractnamefont\subjclassname\ (#1)\@bslabeldelim}\abstracttextfont\
#2%
\par\end{@bstr@ctlist}
}
\def\and{
	\end{tabular}%
	and%
	\begin{tabular}[t]{c}}%
\def\thanks#1{
\protected@xdef\@thanks{\@thanks
\protect\footnotetext[\the\c@footnote]{#1}}%
}
\let\addresses\@empty      
\newcommand{\address}[2][]{\g@addto@macro\addresses{\address{#1}{#2}}}
\newcommand{\curraddr}[2][]{\g@addto@macro\addresses{\curraddr{#1}{#2}}}
\newcommand{\email}[2][]{\g@addto@macro\addresses{\email{#1}{#2}}}
\newcommand{\urladdr}[2][]{\g@addto@macro\addresses{\urladdr{#1}{#2}}}
\def\enddoc@text{
  \ifx\@empty\addresses \else\@setaddresses\fi}
\def\emailaddrname{E-mail address}
\def\@setaddresses{\par
  \nobreak \begingroup
%
%
  \interlinepenalty\@M
  \def\address##1##2{\begingroup%
    \par\addvspace\bigskipamount
    \@ifnotempty{##1}{(\ignorespaces##1\unskip) }%
    {\noindent\ignorespaces##2}\par\endgroup}%
%
%
  \def\email##1##2{\begingroup
    \@ifnotempty{##2}{\nobreak\noindent\emailaddrname
      \@ifnotempty{##1}{, \ignorespaces##1\unskip}\/:\space
      \ttfamily##2\par}\endgroup}%
%
%
  \addresses
  \endgroup
}
\def\cstar#1{\expandafter\@cstar\csname c@#1\endcsname}
\def\@cstar#1{\ifcase#1\or $\ast$\or $\ast\ast$\or $\ast\ast\ast$\fi}
\AddEnumerateCounter{\cstar}{\@cstar}{$\ast\ast\ast$}
\newlist{conditions}{enumerate}{1}
\newlist{nconditions}{enumerate}{1}
\setlist[conditions]{label=\normalfont(\alph*),ref=\normalfont\alph*}
\setlist[nconditions]{label=\normalfont(\arabic*),ref=\normalfont\arabic*}
\mathchardef\mhyphen="2D
\newcommand{\AB}{\mathbb{A}}
\newcommand{\CB}{\mathbb{C}}
\newcommand{\DB}{\mathbb{D}}
\newcommand{\N}{\mathbb{N}}
\newcommand{\PB}{\mathbb{P}}
\newcommand{\R}{\mathbb{R}}
\newcommand{\A}{\mathcal{A}}
\newcommand{\OC}{\mathcal{O}}
\newcommand{\hot}{(\mathrm{hot})}
\newcommand{\codim}{\func{codim}}
\newcommand{\Gr}{\func{Gr}}
\newcommand{\Pole}{\func{Pole}}
\newcommand{\Sing}{\func{Sing}}
\newtheorem{theorem}{Theorem}[section]
\newtheorem{corollary}[theorem]{Corollary}
\newtheorem{proposition}[theorem]{Proposition}
\newtheorem{lemma}[theorem]{Lemma}
\theoremstyle{definition}
\newtheorem*{acknowledgements}{Acknowledgements}
\newtheorem{definition}[theorem]{Definition}
\newtheorem{example}[theorem]{Example}
\newtheorem{remark}[theorem]{Remark}
\newtheorem{case}{\indent Case}
\title{\bf Curve-rational functions}
\date{}
\author{J\'anos Koll\'ar, Wojciech Kucharz \and Krzysztof Kurdyka}
\address{J\'anos Koll\'ar \\
Department of Mathematics, Fine Hall, Washington Road
\\
Princeton University,  Princeton, NJ 08544-1000  USA}
\email {kollar@math.princeton.edu}
\address{Wojciech Kucharz\\Institute of Mathematics, Faculty of Mathematics and Computer
Science,  Jagiellonian University {\L}ojasiewicza 6, 30-348
Krak\'ow, Poland}
\email{Wojciech.Kucharz@im.uj.edu.pl}
\address{Krzysztof Kurdyka\\Laboratoire de Math\'ematiques (LAMA) UMR 5127
CNRS, Universit\'e  Savoie Mont Blanc \\ Campus Scientifique 73 376 Le
Bourget-du-Lac Cedex France}
\email{Krzysztof.Kurdyka@univ-savoie.fr}
\newcounter{proof}
\begin{document}
\maketitle
\thispagestyle{empty}

\begin{abstract}
Let $W$ be a subset  of the set of real points of a real algebraic variety $X$.
We investigate which  functions $f: W \to \R$ are the restrictions of rational functions on $X$.
We introduce two new notions:  {\it curve-rational functions}  (i.e., continuous rational on algebraic curves) and
{\it arc-rational functions} (i.e.,  continuous rational on arcs of algebraic curves). We prove that  under  mild assumptions the following classes of functions coincide: continuous hereditarily rational
(introduced recently by the first named author),
curve-rational and arc-rational. In particular, if $W$ is semialgebraic and $f$  is arc-rational,  then $f$ is  continuous and semialgebraic.
We also show that an  arc-rational function defined  on an open set is arc-analytic (i.e., analytic on analytic arcs).
Furthermore, we study rational functions on products of  varieties. As an application we obtain a characterization of regular functions.
Finally, we get analogous results in the framework of complex algebraic varieties.
\end{abstract}
\keywords{Continuous rational functions, regular functions, semialgebraic functions, Bertini Theorem.}
\subjclass[2010]{14P05, 14P10, 26C15}

%

\section{Introduction}\label{sec-1}

In this paper, a real algebraic variety is a quasi-projective variety
$X$ defined over $\R$. We always assume that $X$ is reduced but allow it
to be reducible. By a subvariety we mean a closed subvariety. The set of
real points is denoted by $X(\R)$ and regarded as a topological space
(with the Euclidean topology). It is easy to see that there is an open
affine subset $X^0 \subset X$ that contains $X(\R)$. Thus, as in
\cite{bib4}, one can always view $X(\R)$ as an algebraic subset of
$\R^n$ for some~$n$. In particular, $\AB^n(\R) = \R^n$. 


We are interested in real-valued functions, defined on some subset of
$X(\R)$, that are restrictions of regular functions or rational
functions on $X$. The precise definition is as follows.

\begin{definition}\label{def-1-1}
Let $X$ be a real algebraic variety, and $f \colon W \to \R$ a function
defined on some subset $W \subset X(\R)$.

We say that $f$ is \emph{regular at a point $x \in W$} if there exist a
Zariski open neighborhood $X_x \subset X$ of $x$ and a regular function
$\Phi_x$ on $X_x$ such that $f|_{W \cap X_x} = \Phi_x|_{W \cap X_x}$.
Moreover, $f$ is called a \emph{regular function} if it is regular at
every point in $W$. Thus, regarding $X(\R)$ as an algebraic  subset of $\R^n$, the function
$f$ is regular at $x$ if and only if there exist two polynomials $p,q \in \R[x_1, \dots,x_n]$ such that $q(x) \ne0$ and $f= p/q$ 
on $W\cap \{q\ne 0\}$.

Denoting by $Y$ the Zariski closure of $W$ in $X$, we see that $f$ is
regular at $x$ if and only if $f|_{W \cap Y_x} = F_x|_{W \cap Y_x}$ for
some regular function $F_x$ defined on a Zariski open neighborhood $Y_x
\subset Y$ of $x$.

We say that $f$ is a \emph{rational function} if there exist a Zariski
open dense subset $Y^0 \subset Y$ and a regular function $F$ on $Y^0$
with $f|_{W \cap Y^0} = F|_{W \cap Y^0}$. In other words, $f$ is a
rational function if and only if there exist a rational function $R$ on
$Y$ and a Zariski open dense subset $Y^0 \subset Y$ such that $Y^0
\subset Y \setminus \Pole(R)$ and $f|_{W \cap Y^0} = R|_{W \cap Y^0}$,
where $\Pole(R)$ stands for the polar set of $R$.
\end{definition}

It easily follows that each regular function on $W$ is also a rational
function.

While the definition makes sense for an arbitrary subset $W$, it is
sensible only if $W$ contains a sufficiently large portion of $Y(\R)$.
The key examples of interest are open subsets and semialgebraic subsets,
with $W = X(\R)$ being the most important case.

We are mainly interested in \emph{continuous rational functions} on $W$,
that is, continuous functions which are also rational.

The following are standard examples.

\begin{example}\label{ex-1-2}
The function $f \colon \R^2 \to \R$, defined by
\begin{equation*}
f(x,y) = \frac{x^3}{x^2 + y^2}\quad \textrm{for}\ (x,y) \neq (0,0)\quad
\textrm{and}\quad f(0,0)=0,
\end{equation*}
is continuous rational but it is not regular at $(0,0)$.

The function $g(x,y) = 1/(1 + x^2 + y^2)$ is regular on $\R^2$.
\end{example}

\begin{example}\label{ex-1-3}
Consider the curve $C \coloneqq (x^3 - y^2 = 0) \subset \AB^2$ and the
functions $f$, $g$ defined on $C(\R)$ by
\begin{gather*}
f(x,y) = \frac{y}{x}\quad \textrm{for}\ (x,y) \neq (0,0)\quad
\textrm{and}\quad
f(0,0)=0,\\
g(x,y) = \frac{x}{y}\quad\textrm{for}\ (x,y) \neq (0,0)\quad
\textrm{and}\quad
g(0,0)=0.
\end{gather*}
Then $f$ is continuous rational, whereas $g$ is rational but it is not
continuous at $(0,0)$.
\end{example}

Regular functions on ${W = X(\R)}$, of course, are in common use
\cite{bib4}. On the other hand, continuous rational functions on ${W =
X(\R)}$ have only recently become the object of serious research. Their
algebraic and geometric properties were considered in \cite{bib9, bib10,
bib11, bib20, bib25}. The homotopy and approximation properties of maps
defined by continuous rational functions were studied in \cite{bib12,
bib13, bib14, bib15, bib17, bib27}, and applications of such maps to
algebraic and stratified-algebraic vector bundles were given in
\cite{bib2, bib16, bib18, bib19}.

Several examples discussed in \cite{bib11} show that continuous rational
functions on ${W = X(\R)}$ behave in a rather unusual way. To eliminate
some unexpected and undesirable phenomena, the notion of hereditarily
rational function was introduced in \cite{bib11}. Such functions played
an important role in \cite{bib9, bib16, bib18, bib19, bib25, bib27}.

\begin{definition}\label{def-1-4}
With notation as in Definition~\ref{def-1-1}, $f \colon W \to \R$ is
called a \emph{hereditarily rational function} if for every real
subvariety $Z \subset X$, the restriction $f|_{W \cap Z}$ is a rational
function.
\end{definition}

If $X$ is smooth, then every continuous rational function on $W = X(\R)$
is hereditarily rational \cite[Proposition~8]{bib11}. It is not the case
for singular varieties. We now recall \cite[Example~2]{bib11}.

\begin{example}\label{ex-1-5}
Consider the algebraic surface
\begin{equation*}
S \coloneqq (x^3 - (1 + z^2)y^3 = 0) \subseteq \AB^3.
\end{equation*}
Then ${S(\R) \subset \R^3}$ is an analytic submanifold and the function
${f\colon S(\R) \to \R}$, defined by \linebreak ${f(x, y, z) = (1+z^2)^{1/3}}$, is
analytic and semialgebraic. Furthermore, $f$ is a continuous rational
function on $S(\R)$ since $f(x,y,z) = x/y$ on $S(\R)$ without the
$z$-axis. On the other hand, $f$ restricted to the $z$-axis is not a
rational function. Thus $f$ is not hereditarily rational.
\end{example}

It turns out that hereditarily rational functions can be characterized
by restrictions to irreducible real algebraic curves.

\begin{definition}\label{def-1-6}
With notation as in Definition~\ref{def-1-1}, $f \colon W \to \R$ is
said to be \emph{rational on algebraic curves} if for every irreducible
real algebraic curve $C \subset X$, the function $f|_{W \cap C}$ is
rational. If, in addition, $f|_{W \cap C}$ is continuous, then $f$ is
said to be \emph{continuous rational on algebraic curves} or
\emph{curve-rational} for short.
\end{definition}

Our main result on curve-rational functions is the following.

\begin{theorem}\label{th-1-7}
Let $X$ be a real algebraic variety and let $W \subset X(\R)$ be a
subset that is either open or semialgebraic. For a function $f \colon W
\to \R$, the following conditions are equivalent:
\begin{conditions}
\item\label{th-1-7-a} $f$ is continuous and hereditarily rational.

\item\label{th-1-7-b} $f$ is curve-rational.
\end{conditions}
\end{theorem}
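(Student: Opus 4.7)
The implication $(\ref{th-1-7-a}) \Rightarrow (\ref{th-1-7-b})$ is immediate from the definitions: hereditary rationality gives rationality on each irreducible algebraic curve $C \subset X$, and continuity of $f$ is inherited by the restriction $f|_{W \cap C}$. The content of the theorem lies in $(\ref{th-1-7-b}) \Rightarrow (\ref{th-1-7-a})$, which I will prove by induction on $\dim X$ (equivalently, on the dimension of the Zariski closure of $W$, after replacing $X$ by that closure in order to assume $W$ is Zariski dense). The case $\dim X \le 1$ follows directly from the hypothesis together with trivial handling of $0$-dimensional subvarieties.

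For the inductive step I first observe that for every proper subvariety $Z \subsetneq X$, the restriction $f|_{W \cap Z}$ is again curve-rational as a function on $W \cap Z \subset Z(\R)$ (since irreducible curves in $Z$ are irreducible curves in $X$), and $W \cap Z$ remains either open or semialgebraic in $Z(\R)$. The induction hypothesis then yields that $f|_{W \cap Z}$ is continuous and hereditarily rational. This already handles rationality on, and continuity within, every proper subvariety of $X$.

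The two remaining tasks are rationality of $f$ as a function on $W$ itself and continuity of $f$ at points of the ``bad'' locus. For rationality I plan to use a Bertini-style fibration: choose a dominant rational map $\pi \colon X \dashrightarrow B$ with $\dim B = \dim X - 1$ whose general fibre $C_b$ is an irreducible algebraic curve such that $W \cap C_b$ is Zariski dense in $C_b$. On each such $C_b$ the restriction $f|_{W \cap C_b}$ is continuous rational, and these rational functions must glue algebraically: the Zariski closure $\Gamma \subset X \times \AB^1$ of the graph of $f|_W$ meets $C_b \times \AB^1$ in the graph of $f|_{W \cap C_b}$, so $\Gamma$ has dimension $\dim X$ and projects birationally onto $X$. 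The second projection then defines a rational function $R$ on $X$ with $f = R$ on a Zariski open dense subset $X^0 \cap W$ of $W$, which is the required rationality.

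For continuity, $f = R$ is automatically continuous on $W \cap X^0$, while continuity of $f$ inside $W \cap \Pole(R)$ is given by the inductive step applied to $Z = \Pole(R)$. The delicate point is continuity across $\Pole(R)$ from the side of $X^0$: given $x_0 \in W \cap \Pole(R)$ and a sequence $x_k \to x_0$ in $W \cap X^0$ with $|f(x_k) - f(x_0)| \ge \varepsilon$, I localize to a semialgebraic neighbourhood of $x_0$ (allowed since $W$ is open or semialgebraic) and note that $\{p \in W \cap X^0 : |R(p) - f(x_0)| \ge \varepsilon\}$ is then semialgebraic and has $x_0$ in its Euclidean closure. The \L{}ojasiewicz curve selection lemma produces a real-analytic semialgebraic arc $\gamma$ in this set ending at $x_0$; its Zariski closure is an irreducible algebraic curve $C \ni x_0$, to which the curve-rational hypothesis applies, forcing $f(\gamma(t)) \to f(x_0)$ and giving the desired contradiction. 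I expect the Bertini construction of $R$ and this curve-selection argument to be the main technical hurdles.
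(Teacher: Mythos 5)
Your outer skeleton (induction on $\dim X$, handling proper subvarieties by the inductive hypothesis, and proving continuity across the bad locus by \L{}ojasiewicz curve selection applied to $\{p\in W\cap X^0: |R(p)-f(x_0)|\ge\varepsilon\}$ and the curve-rational hypothesis on the Zariski closure of the selected arc) is sound and close in spirit to what the paper does for continuity (Proposition~\ref{prop-5-1}). The genuine gap is in the rationality step. You assert that the Zariski closure $\Gamma\subset X\times\AB^1$ of the graph of $f|_W$ meets $C_b\times\AB^1$ in the graph of $f|_{W\cap C_b}$ and hence has dimension $\dim X$ and projects birationally to $X$. The containment only goes one way: $\Gamma\cap(C_b\times\AB^1)$ \emph{contains} the closure of the graph of $f|_{W\cap C_b}$, but nothing you have said prevents $\Gamma$ from being all of $X\times\AB^1$, in which case $\Gamma\cap(C_b\times\AB^1)=C_b\times\AB^1$ and no dimension bound follows. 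In other words, you are implicitly assuming that $f$ is semialgebraic (its graph lies in a proper hypersurface) at least on some open set, which is essentially the hardest point of the theorem and cannot be extracted from the curve-wise data by a purely formal gluing argument; Example~\ref{ex-2-7} of the paper is there precisely to warn that excellent behaviour on all (semi)algebraic arcs does not by itself force semialgebraicity. The paper resolves this with genuinely analytic input: Lemma~\ref{lem-3-4}, resting on the Siciak--B\l{}ocki theorem on separately analytic functions (Theorem~\ref{th-3-1}) and the Bochner--Martin theorem on separately Nash functions (Theorem~\ref{th-3-2}), first produces a nonempty open set on which $f$ is Nash, hence has graph in an irreducible hypersurface; only then does the Bertini argument (Proposition~\ref{prop-2-5}) enter, and even there the birationality of the projection is not automatic but is proved by showing that degree $m>1$ would yield a Bertini curve carrying a smooth algebraic arc on which $f$ could not be regular.

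A secondary point: even after a rational $R$ with $f=R$ on $W\cap X^0$ is found on one connected piece, extending the identity over all of $W$ (the role of Lemma~\ref{lem-2-6}, Lemma~\ref{lem-4-2} and the stratification of Proposition~\ref{prop-4-4} in the paper, which also covers semialgebraic $W$ that is nowhere open in $X(\R)$ and open $W$ with infinitely many components) needs an argument you have not supplied; your Bertini fibration also needs care when $W$ is a low-dimensional semialgebraic set, where ``$W\cap C_b$ Zariski dense in $C_b$ for general $b$'' is not available without first stratifying $W$. So the proposal as written does not close the central rationality step; supplying it would require either the separately-analytic/Nash machinery the paper uses or a substitute for it.
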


A function on $\R^n$ that is rational on algebraic curves need not be
rational.

\begin{example}\label{ex-1-8}
Consider the transcendental curve
%
$
T \coloneqq (e^x - y = 0) \subset \R^2.
$
%
The function $f \colon \R^2 \to \R$, defined by
\begin{equation*}
f(x,y)=0\quad \textrm{for}\ (x,y) \in T\quad \textrm{and}\quad
f(x,y)=1\quad \textrm{for}\ (x,y) \in \R^2 \setminus T,
\end{equation*}
is rational on algebraic curves but it is not rational.
\end{example}

In Section~\ref{sec-4} we give a detailed description of relationships
between hereditarily rational functions (not necessarily continuous) and
functions rational on algebraic curves.

It is convenient to have the following local variant of
Definition~\ref{def-1-6}.

\begin{definition}\label{def-1-9}
With notation as in Definition~\ref{def-1-1}, $f \colon W \to \R$ is
said to be \emph{continuous rational on algebraic arcs} or
\emph{arc-rational} for short if for every point $x \in W$ and every
irreducible real algebraic curve $C \subset X$, with $x \in C(\R)$,
there exists an open neighborhood $U_x \subset W$ of $x$ such that
$f|_{U_x \cap C}$ is a continuous rational function.
\end{definition}

In Definition~\ref{def-1-9}, one could require only that $f|_{U_x \cap
C}$ be a rational function (not necessarily continuous), but such a
weaker notion would not be useful for us.

Clearly, any curve-rational function is arc-rational. The converse does
not hold for a rather obvious reason. For instance, consider the
hyperbola $H \coloneqq (xy - 1 =0) \subseteq \AB^2$. Any real-valued
function on $H(\R)$ that is constant on each connected component of
$H(\R)$ is arc-rational, but it must be constant to be rational.

Our main result on arc-rational functions concerns functions defined on
connected open sets that avoid singularities.

Let $X$ be a real algebraic variety. We say that an open subset $U
\subset X(\R)$ is \emph{smooth} if it is contained in $X \setminus
\Sing(X)$, where $\Sing(X)$ stands for the singular locus of $X$.

\begin{theorem}\label{th-1-10}
Let $X$ be a real algebraic variety and let $U \subset X(\R)$ be a
connected smooth open subset. For a function $f \colon U \to \R$, the
following conditions are equivalent:
\begin{conditions}
\item\label{th-1-10-a} $f$ is continuous and hereditarily rational.

\item\label{th-1-10-b} $f$ is arc-rational.
\end{conditions}
\end{theorem}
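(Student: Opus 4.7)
The plan is to dispatch \ref{th-1-10-a}$\Rightarrow$\ref{th-1-10-b} immediately from the definitions and then invoke Theorem~\ref{th-1-7} to reduce the substantive direction to upgrading arc-rationality to curve-rationality on $U$. For the easy direction, if $f$ is continuous and hereditarily rational then for each $x\in U$ and each irreducible curve $C\subset X$ with $x\in C(\R)$ the restriction $f|_{U\cap C}$ is continuous and rational, so the choice $U_x=U$ in Definition~\ref{def-1-9} witnesses arc-rationality.

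For \ref{th-1-10-b}$\Rightarrow$\ref{th-1-10-a}, since $W=U$ is open, Theorem~\ref{th-1-7} already equates \ref{th-1-10-a} with $f$ being curve-rational. Fix an irreducible real algebraic curve $C\subset X$; the task is to prove that $f|_{U\cap C}$ is continuous and rational. Continuity on $U\cap C$ is a local condition and is immediate from Definition~\ref{def-1-9}. For rationality, arc-rationality produces at each $x\in U\cap C$ a Euclidean open $U_x\subset U$ and a rational function $R_x$ on $C$ with $f|_{U_x\cap C}=R_x|_{U_x\cap C}$ on a Zariski open dense subset. Since $C$ is irreducible, two such representatives $R_x$ and $R_{x'}$ must coincide as rational functions on $C$ whenever $U_x\cap U_{x'}$ contains a point of $C(\R)$ that is not isolated in $C(\R)$---otherwise their difference would be a non-zero rational function vanishing on an infinite subset of $C$. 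Consequently $x\mapsto R_x$ is locally constant on the non-isolated part of $U\cap C$, and in particular constant on each of its connected components.

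The main obstacle, and the place where the connectedness of $U$ and the smoothness hypothesis $U\subset X\setminus\Sing(X)$ enter, is to rule out the possibility that $R_x$ changes between distinct connected components of $U\cap C$. Given $x_0,x_1$ in distinct components, the plan is to exploit the connectedness of $U$ to join them by a continuous path in $U$, cover the path by finitely many small Euclidean open subsets of $U$, and in each cover element construct, via a Bertini-type argument valid on the smooth locus, auxiliary irreducible real algebraic curves through prescribed points of $U$ meeting $C$ (or one another) in non-isolated real points. Applying the local-constancy principle along each such auxiliary curve and chaining through common non-isolated meeting points then propagates $R_{x_0}$ to $R_{x_1}$, yielding $R_{x_0}=R_{x_1}$ as rational functions on $C$ and hence a single rational function $R$ on $C$ with $f=R$ on a Zariski open dense subset of $U\cap C$. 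The hardest step I anticipate is producing these auxiliary curves with the required incidence behaviour; this is exactly where smoothness of $U$ should be indispensable, plausibly via a refined Bertini-type existence theorem for irreducible curves on smooth real algebraic varieties.
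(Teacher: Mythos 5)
Your easy direction, the reduction via Theorem~\ref{th-1-7} to showing that $f$ is curve-rational, and the local observation that two representatives $R_x, R_{x'}$ on a fixed irreducible curve $C$ must coincide once they agree with $f$ on an infinite subset of $C(\R)$, are all fine. The genuine gap is the step you yourself flag as the main obstacle: proving that the representative does not change between distinct connected components of $U \cap C$. The chaining mechanism you sketch cannot work as described. Two distinct irreducible curves intersect in only finitely many points, so an auxiliary curve $C'$ can never meet $C$ in a set that is infinite in $C$; what an auxiliary curve transmits about $f|_{U\cap C}$ is only the values of $f$ at finitely many points of $C$, and finitely many values never identify a rational function on $C$ (two distinct rational functions can agree there). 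More fundamentally, before one knows that $f$ is globally rational on $U$, the values of $f$ off $C$ bear no algebraic relation to the candidate rational functions on $C$, so no curve-by-curve propagation through $U$ can close the argument; the ``refined Bertini-type existence theorem'' you defer to would not repair this even if available.

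The paper resolves exactly this difficulty by globalizing first rather than arguing curve by curve. Since $f$ is arc-rational it is regular on smooth algebraic arcs, so Theorem~\ref{th-2-4} (whose proof is where Bertini, Nash functions and the connectedness and smoothness of $U$ actually enter, via Lemma~\ref{lem-3-4}, Proposition~\ref{prop-2-5} and Lemma~\ref{lem-2-6}) gives a single rational function $R$ on $X$ with $f = R$ on $U$ off a set of codimension at least $2$; Proposition~\ref{prop-5-1} gives continuity, so $f$ is continuous rational on $U$. Hereditary rationality is then obtained from Proposition~\ref{prop-5-2} (the analogue of \cite[Proposition~8]{bib11}): for a real subvariety $Z \subset X$ with $U \cap Z \neq \varnothing$, one applies it to each irreducible component $Y$ of the Zariski closure of $U \cap Z$, using that $U \cap (Y \setminus \Sing(Y)) \neq \varnothing$ because $U$ is a smooth open set. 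This handles all subvarieties at once (in particular all curves, which is what your approach would need) and uses continuity in an essential way that your proposal never exploits. To fix your write-up you would have to either prove your bridging step by some such globalization, or import Theorem~\ref{th-2-4} and Proposition~\ref{prop-5-2} explicitly; as it stands the central step is asserted, not proved.
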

The main properties of arc-rational functions on semialgebraic   sets 
can be summarized as follows.

\begin{theorem}\label{th-1-11a}
Let $X$ be a real algebraic variety and let
$f:W\to \R$ be  an arc-rational function defined on a semialgebraic subset $W\subset X(\R)$.
Then $f $ is continuous and there exists a sequence of semialgebraic sets
\begin{equation*}
W = W_0 \supset W_1 \supset \ldots \supset W_m = \emptyset
\end{equation*}
which are closed in $W$,  such that $f$ is a regular function on each connected component
of $W_i\setminus W_{i+1}$, for $i=0, \dots, m-1$. In particular, $f$ is a  semialgebraic function.
\end{theorem}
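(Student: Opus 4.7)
I plan to prove Theorem~\ref{th-1-11a} by Noetherian induction on the dimension $d$ of the Zariski closure $Y$ of $W$ in $X$. First I would replace $X$ by $Y$: any irreducible curve $C \subset X$ not contained in $Y$ meets $W$ in a finite set on which the arc-rational condition is automatic, so this reduction preserves all hypotheses. The base case $d = 0$ is trivial since $W$ is then finite.

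For the inductive step, let $X' \subset X$ be the union of $\Sing(X)$ with the irreducible components of $X$ of dimension strictly less than $d$; so $\dim X' < d$ and $(X \setminus X')(\R)$ is a smooth open subset of $X(\R)$, with finitely many connected components $U_1, \ldots, U_r$. The crux of the proof is the claim that for each index $\alpha$ with $W \cap U_\alpha \neq \emptyset$, there is a rational function $R_\alpha$ on $X$ coinciding with $f$ on $(W \cap U_\alpha) \setminus \Pole(R_\alpha)(\R)$. To establish this claim I would apply Theorem~\ref{th-1-10} to the connected smooth open set $U_\alpha$, which requires extending $f$ from $W \cap U_\alpha$ to an arc-rational function $\widetilde f_\alpha \colon U_\alpha \to \R$. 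I envisage building the extension by a Bertini-style argument: sweep a pencil of irreducible algebraic curves through $U_\alpha$, use arc-rationality of $f$ to propagate values of $f$ along each curve, and verify that the propagated values assemble consistently; Theorem~\ref{th-1-10} then ensures $\widetilde f_\alpha$ is continuous and hereditarily rational, producing the required $R_\alpha$ after a Zariski-open globalization.

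Granting the claim, set $W_1 \coloneqq W \cap \bigl(X' \cup \bigcup_\alpha \Pole(R_\alpha)\bigr)(\R)$, a semialgebraic subset closed in $W$ whose Zariski closure has dimension less than $d$. The inductive hypothesis applied to $f|_{W_1}$ supplies the tail $W_1 \supset W_2 \supset \cdots \supset W_m = \emptyset$ of the stratification, together with continuity and semialgebraicity of $f$ on $W_1$. Any connected component of $W \setminus W_1$ lies entirely in some $U_\alpha$ and is disjoint from $\Pole(R_\alpha)(\R)$, so on it $f$ coincides with the regular function $R_\alpha$, yielding the required regularity. Semialgebraicity of $f$ on all of $W$ is then immediate from the stratification. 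Continuity of $f$ at $x \in W$ is obtained via semialgebraic curve selection: once $f$ is known to be semialgebraic, any accumulation at $x$ in the semialgebraic set $\{y \in W : |f(y) - f(x)| > \varepsilon\}$ would produce a semialgebraic analytic arc through $x$ lying on some irreducible algebraic curve, along which arc-rationality forces $f$ to be continuous at $x$, a contradiction.

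The main obstacle I anticipate is the Bertini-style construction of the arc-rational extension $\widetilde f_\alpha$; once this bridge between the curve-wise data of $f$ on $W \cap U_\alpha$ and the global framework of Theorems~\ref{th-1-7} and~\ref{th-1-10} is in place, the remainder of the proof is essentially bookkeeping.
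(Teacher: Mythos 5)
Your reduction to the Zariski closure and the inductive bookkeeping are reasonable, but the crux of your argument --- extending $f$ from $W \cap U_\alpha$ to an arc-rational function $\widetilde f_\alpha$ on all of $U_\alpha$ so that Theorem~\ref{th-1-10} applies, and thereby obtaining a \emph{single} rational function $R_\alpha$ agreeing with $f$ on $(W \cap U_\alpha)\setminus\Pole(R_\alpha)(\R)$ --- is not a technical obstacle to be overcome later; it is false. Take $X=\AB^2$, so $X'=\varnothing$ and $U_1=\R^2$, and let $W$ be the union of two disjoint closed discs with $f\equiv 0$ on one and $f\equiv 1$ on the other. This $f$ is arc-rational (a small neighborhood in $W$ of any point stays inside one disc, where $f$ is constant), yet no rational function $R$ on $\AB^2$ can agree with $f$ on $W\setminus\Pole(R)(\R)$: since $\Pole(R)(\R)$ is a proper algebraic subset of $\R^2$, $R$ would vanish on a nonempty open subset of the first disc, hence identically, contradicting $R=1$ on an open subset of the second. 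For the same reason no arc-rational extension of $f$ to $\R^2$ exists, because by Theorem~\ref{th-1-10} (or Theorem~\ref{th-2-4}) such an extension would itself agree with one rational function off a codimension-two set. The only true statement at this stage is ``one rational function per connected component of a suitable piece of $W$'', which is precisely what the theorem asserts; your claim assumes a false strengthening of the conclusion.

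A second point you glide over is that $W\cap U_\alpha$ need not be open in $U_\alpha$, so Theorem~\ref{th-2-4} or~\ref{th-1-10} cannot be applied to it directly; the paper resolves this by shrinking rather than extending. In Proposition~\ref{prop-4-4} one works inside the Zariski closure $Y_0$ of $W$, removes from $W$ the semialgebraic part that is not interior to $W\cap\bigl(Y_0(\R)\setminus\Sing(Y_0)\bigr)$ --- its Zariski closure is nowhere dense in $Y_0$, so it is simply deferred to the next stratum --- and applies Theorem~\ref{th-2-4} (only ``regular on smooth algebraic arcs'' is needed, which arc-rationality implies) to each of the finitely many connected components of the remaining smooth open semialgebraic set, each component getting its own rational function; the poles and loci of non-regularity are collected into one Zariski nowhere dense subvariety defining $W_1$, and the construction is iterated. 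This yields the stratification and semialgebraicity (Corollary~\ref{cor-4-6}); continuity is then Proposition~\ref{prop-5-1}, proved by Nash curve selection applied to the graph, much as in your final paragraph. So your continuity step is essentially in line with the paper, but the heart of your proof needs to be replaced by this shrink-and-stratify argument.
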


We also establish a connection between arc-rational functions and,
introduced earlier in \cite{bib22}, arc-analytic functions.
A~function $\varphi \colon V \to \R$, defined on a real analytic variety
$V$, is said to be \emph{arc-analytic} if $\varphi \circ \eta$ is
analytic for every analytic arc $\eta \colon (-1,1) \to V$. An
arc-analytic function on $\R^n$ need not be continuous \cite{bib1} and even for $n=2$   
it may have a nondiscrete singular set \cite{bib23}.

\begin{theorem}\label{th-1-11}

Let $X$ be a real algebraic variety and let
$f:W\to \R$ be an arc-rational function defined on an open subset $W\subset X(\R)$.
Then $f$ is continuous and arc-analytic.
\end{theorem}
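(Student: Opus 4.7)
The plan is to prove continuity and arc-analyticity separately.

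\textbf{Continuity.} For each $x_0 \in W$, embed $X(\R) \hookrightarrow \R^n$ and choose an open Euclidean ball $B \subset W$ about $x_0$. Since $B$ is open and semialgebraic, Theorem~\ref{th-1-11a} applied to the arc-rational restriction $f|_B$ delivers continuity of $f$ at $x_0$; as $x_0$ was arbitrary, $f$ is continuous on $W$.

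\textbf{Arc-analyticity.} Let $\eta \colon (-1,1) \to W$ be a non-constant analytic arc with $\eta(0) = x_0$, and let $Z \subset X$ be the Zariski closure of $\eta((-1,1))$; connectedness of $(-1,1)$ forces $Z$ to be irreducible (if $Z = Z_1 \cup Z_2$ algebraically, then the closed analytic sets $\eta^{-1}(Z_i(\R))$ cover $(-1,1)$, so one of them is everything). The core is an elementary lemma: if $p,q \colon (-\varepsilon,\varepsilon) \to \R$ are real-analytic with $q \not\equiv 0$ and the quotient $p/q$ extends continuously across a zero $t_0$ of $q$, then the extension is analytic at $t_0$ (compare vanishing orders: continuity forces $\mathrm{ord}_{t_0}(p) \geq \mathrm{ord}_{t_0}(q)$, and one divides out the common factor $(t-t_0)^{\mathrm{ord}_{t_0}(q)}$). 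I would apply this to $p := P \circ \eta$ and $q := Q \circ \eta$, where $P, Q$ are polynomials on $\R^n$ with $Q|_Z \not\equiv 0$ and with $f = P/Q$ holding on a Zariski open dense subset of $Z(\R)$ near $x_0$. Zariski density of $\eta((-1,1))$ in $Z$ forces $q \not\equiv 0$, and the identity $f(\eta(t)) = p(t)/q(t)$ holds off the discrete set $\{q=0\} \subset (-1,1)$; the lemma then yields analyticity of $f \circ \eta$ at each zero of $q$, and hence on all of $(-1,1)$.

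The main obstacle is producing the polynomials $P, Q$. When $x_0$ is smooth in $X(\R)$, this is immediate from Theorem~\ref{th-1-10}: restricting to a connected open smooth neighborhood $U \subset W$ of $x_0$ makes $f|_U$ continuous and hereditarily rational, so $f|_{U \cap Z(\R)}$ is rational and provides such $P, Q$. When $x_0 \in \Sing(X)(\R)$, I would argue by induction on $\dim X$ (the base $\dim X = 0$ being trivial). If $\eta$ maps into a proper algebraic subvariety $X' \subsetneq X$, I apply the inductive hypothesis to the restriction $f|_{W \cap X'(\R)}$ (which is arc-rational on the open subset $W \cap X'(\R) \subset X'(\R)$, since every irreducible algebraic curve in $X'$ is one in $X$). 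Otherwise $Z$ is not contained in $\Sing(X)$, so $Z$ has points smooth in $X$; applying the smooth-case analysis at such a point yields a rational function on the irreducible variety $Z$ that agrees with $f$ on a Zariski open dense subset, and this rational function is globally representable as a quotient $P/Q$ of polynomials on the ambient $\R^n$. Evaluating along $\eta$ and invoking the lemma completes the argument.
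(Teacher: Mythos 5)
Your continuity argument is fine (it is essentially Proposition~\ref{prop-5-1}), and your smooth case is also sound: near a point of $X(\R)\setminus\Sing(X)$, Theorem~\ref{th-1-10} plus hereditary rationality applied to the (irreducible) Zariski closure $Z$ of the arc, together with your order-of-vanishing lemma, does give analyticity of $f\circ\eta$ there. The genuine gap is in your singular case (b), which is exactly the hard point of the theorem. Applying the smooth-case analysis at a smooth point $y_0=\eta(t_1)$, $t_1\neq 0$, only yields $f=P/Q$ on $U'\cap Z^0$ for a small Euclidean neighborhood $U'$ of $y_0$ and some Zariski open dense $Z^0\subset Z$; it does not yield agreement of $f$ with $P/Q$ on a Zariski dense subset of $Z(\R)$ near $x_0$, hence not the identity $f(\eta(t))=p(t)/q(t)$ on a punctured neighborhood of $t=0$, which is what your lemma needs. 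Arc-rational functions need not be given by a single rational function across the locus where smoothness or connectedness fails (compare the hyperbola example before Theorem~\ref{th-1-10}), so the rational functions you obtain from smooth points on the two sides of $0$ may a priori differ; continuity of $f$ at $x_0$ only matches their limits at $t=0$, i.e.\ the zeroth-order terms of the two one-sided expansions of $f\circ\eta$, not the higher-order terms required for analyticity. Since $\dim Z$ can be $\geq 2$ and $\eta$ need not lie on any algebraic curve, arc-rationality gives no direct control along $\eta$ near $0$. Your induction does not rescue this: in case (b) one may have $Z=X$, so there is no dimension drop (and in case (a) a proper subvariety of a reducible $X$ need not have smaller dimension, though that is easily repaired by first passing to the irreducible component containing $Z$).

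Matching the two one-sided expansions across the singular locus is precisely where the paper does its real work. It first makes $f$ continuous and semialgebraic (Corollary~\ref{cor-4-6} and Proposition~\ref{prop-5-1}), hence locally H\"older by the \L{}ojasiewicz inequality; then, assuming $f\circ\eta$ is not analytic at $0$, it quantifies the failure by an estimate valid against \emph{every} analytic function $h$, and contradicts it by approximating $\eta$ to arbitrarily high order by Nash arcs $\gamma$ inside $W$ (Kurdyka's lemma, or Artin approximation), for which $f\circ\gamma$ \emph{is} analytic by Lemma~\ref{lem-5-3}, because the image of a Nash arc does lie on an algebraic curve and arc-rationality applies. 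Some ingredient of this kind--combining quantitative (H\"older) continuity of $f$ with genuinely algebraic arcs approximating $\eta$--is needed to cross $\Sing(X)$; your proposal has no substitute for it, so as written the arc-analyticity claim is established only at points of the arc lying over the smooth locus.
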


The paper is organized as follows.

In Section~\ref{sec-2}, imposing a weaker condition than in
Definition~\ref{def-1-9}, we introduce \emph{functions regular on smooth
algebraic arcs}. The key result is Theorem~\ref{th-2-4}. It asserts that
a function regular on smooth algebraic arcs, defined on a connected
smooth open set, is rational. 


Section~\ref{sec-4} contains several results that
can be derived from Theorem~\ref{th-2-4}. According to
Theorems~\ref{th-4-3} and~\ref{th-4-5}, a function defined on an open or
semialgebraic set is hereditarily rational, provided that it is rational
on algebraic curves and regular on smooth algebraic arcs.
Theorem~\ref{th-4-7} says that a function defined on a semialgebraic set
is hereditarily rational if and only if it is rational on algebraic
curves and semialgebraic. By Corollary~\ref{cor-4-6}, a function defined
on a semialgebraic set and regular on smooth algebraic arcs is
semialgebraic. This latter fact is important for the proofs of
Theorems~\ref{th-1-7},~\ref{th-1-10}, ~\ref{th-1-11a}  and~\ref{th-1-11} given in
Section~\ref{sec-5}. Rational functions regular on smooth algebraic arcs
need not be continuous (Example~\ref{ex-2-3}) and for this reason we
introduced arc-rational functions.


In Section~\ref{sec-7} we investigate rational functions on products of
varieties. Theorem~\ref{th-7-1} is a substantial generalization of
Theorem~\ref{th-2-4}. It is one of our main results, along with the theorems announced in this section.

Section~\ref{sec-8} is devoted to regular functions.
Theorem~\ref{th-8-1} says that a function defined on a connected open
subset $U \subset \R^n$, with $n \geq 2$, is regular if and only if its
restriction to $U \cap M$ is regular for every $2$-dimensional affine
plane $M \subset \R^n$. A~variant of this result for functions defined
on $X(\R)$, where $X$ is a smooth real algebraic variety, is given in
Theorem~\ref{th-8-2}.

In Section~\ref{sec-9} we consider analogous notions in the framework of
complex algebraic varieties and obtain counterparts of the results
described above.

\section{Functions regular on smooth algebraic arcs}\label{sec-2}

\subsection{The key result}
Let $C$ be an irreducible real algebraic curve. We call any noncompact connected
smooth open subset ${A \subset C(\R)}$ a \emph{smooth algebraic arc}.
Thus, a subset $A \subset C(\R)$ is a smooth algebraic arc if and only
if it is homeomorphic to $\R$ and contained in $C \setminus \Sing(C)$.
If, in addition, $C$ is a curve in a real algebraic variety $X$, we say
that $A$ is a smooth algebraic arc in $X(\R)$.

\begin{definition}\label{def-2-1}
Let $X$ be a real algebraic variety, and $f \colon W \to \R$ a function
defined on some subset $W \subset X(\R)$.
We say that $f$ is \emph{regular on smooth algebraic arcs} if for every
point $x \in W$ and every smooth algebraic arc $A$ in $X(\R)$, with $x
\in A$, there exists an open neighborhood $U_x \subset W$ of $x$ such
that the function $f|_{U_x \cap A}$ is regular (equivalently, one can
require that the function $f|_{U_x \cap A}$ be continuous rational).

Assuming that $W$ is an open subset, we see that $f$ is regular on
smooth algebraic arcs if and only if the restriction of $f$ is a regular
function on each smooth algebraic arc contained in $W$.
\end{definition}

The following example is given just to illustrate the definition.

\begin{example}\label{ex-2-2}
Let $W = \{ x \in \R \mid x = 0\ \textrm{or}\ x \geq 1 \}$ and let $f
\colon W \to \R$ be defined by $f(0) = 0$ and $f(x) = 1/x$ for $x \geq
1$. Then $f$ is regular on smooth algebraic arcs. Clearly, $f$ cannot be
extended to a regular function on $\R$.
\end{example}

Any arc-rational function is regular on smooth algebraic arcs.
A~rational function on $\R^n$ can be regular on smooth algebraic arcs
without being even locally bounded on algebraic curves.

\begin{example}\label{ex-2-3}
\renewcommand{\theequation}{\roman{equation}}
The rational function $f \colon \R^2 \to \R$, defined by
\begin{equation*}
f(x,y) = \frac{x^8 + y(x^2 - y^3)^2}{x^{10} + (x^2 - y^3)^2}\quad
\textrm{for}\ (x,y) \neq (0,0)\quad \textrm{and}\quad f(0,0)=0,
\end{equation*}
has the following properties:
\begin{nconditions}
\item\label{ex-2-3-1} $f$ is not locally bounded on the curve $x^2 - y^3
=0$;

\item\label{ex-2-3-2} $f$ is not arc-rational;

\item\label{ex-2-3-3} $f$ is regular on smooth algebraic arcs.
\end{nconditions}
Conditions (\ref{ex-2-3-1}) and (\ref{ex-2-3-2}) hold since $f(x,y) = 1/x^2$
on the curve $x^2 - y^3 = 0$ away from $(0,0)$. In order to prove
(\ref{ex-2-3-3}), it suffices to show that for any smooth algebraic arc $A
\subset \R^2$, with $(0,0) \in A$, the function $f|_A$ is regular at
$(0,0)$. Such an arc $A$ has near $(0,0)$ a local analytic
parametrization of the form
\begin{align}\label{eq-i}
x(t) &= at + \hot,\quad a\in \R,\\
y(t) &= t + \hot \notag
\end{align}
or
\begin{align}\label{eq-ii}
x(t) &= t + \hot,\\
y(t) &= bt^k + \hot,\quad b \in \R,\ k>1, \notag
\end{align}
where $\hot = \textrm{higher order terms}$. In case \eqref{eq-i},
$f(x(t), y(t)) = t + \hot$ no matter whether $a = 0$ or $a \neq 0$. In
case \eqref{eq-ii},  
\[
  f((x(t),y(t))=\begin{cases}
               bt^k +\hot \hskip1,2 cm \text {if} \hskip0,5 cm  2\leq k\leq 3,\\
            (b+1)t^4+\hot \hskip0,3 cm \text {if} \hskip0,5 cm k=4,\\
            t^4+\hot \hskip1,4 cm  \text {if} \hskip0,5 cm k\ge 5.
            \end{cases}
\]
Thus, $f|_A$ is
regular at $(0,0)$ as required.
%
\renewcommand{\theequation}{\arabic{equation}}
\end{example}

The following result will play a key role in the subsequent sections.

\begin{theorem}\label{th-2-4}
Let $X$ be a real algebraic variety, $U \subset X(\R)$ a connected
smooth open subset, and $f \colon U \to \R$ a function regular on smooth
algebraic arcs. Then there exists a rational function $R$ on $X$ such
that $P \coloneqq U \cap \Pole(R)$ has codimension at least $2$ and
$f|_{U \setminus P} = R|_{U \setminus P}$.
\end{theorem}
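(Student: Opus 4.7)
The plan is to prove the theorem by induction on $n = \dim X$, combining a Bertini-style pencil of hyperplane sections with fiberwise application of the inductive hypothesis. For the reductions, since $U$ is connected and contained in $X \setminus \Sing(X)$, it lies in a single irreducible component of $X$, so I may replace $X$ by that component and assume $X$ is irreducible. After shrinking away from $\Sing(X)$ (which only enlarges the eventual pole set by a codimension-$\geq 2$ amount), I may further assume $X$ is smooth and quasi-projective, and embed it in some projective space for use below.

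For the base case $n = 1$: $X$ is an irreducible smooth curve, and every point of $U$ lies on a smooth algebraic arc $A \subset X(\R)$. Since $A$ and $U$ are both open in the $1$-dimensional topological manifold $X(\R)$, the hypothesis forces $f$ to be locally a regular function on $U$, hence regular on all of $U$. Because $U$ is a non-empty open subset of the smooth irreducible curve $X$, $f$ extends to a rational function $R$ on $X$ whose pole set is disjoint from $U$, and the codimension condition holds vacuously.

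For the inductive step with $n \geq 2$, I would choose a linear pencil $\{H_t\}_{t \in \PB^1}$ of hyperplane sections of $X$ whose base locus $B$ has codimension $\geq 2$ in $X$, arranged so that generic real members contain real points of $U$. By Bertini in characteristic zero, $X \cap H_t$ is irreducible and smooth off $B$ for a Zariski-dense set of $t$, and by choice of pencil it meets $U$ in a non-empty open subset $U_t$. Since any smooth algebraic arc in $H_t$ is also a smooth algebraic arc in $X$, the restriction $f|_{U_t}$ is regular on smooth algebraic arcs in $H_t$; by induction applied to $(X \cap H_t, U_t, f|_{U_t})$, there is a rational function $R_t$ on $X \cap H_t$ agreeing with $f$ off a codimension-$\geq 2$ subset of $U_t$.

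The main obstacle is the assembly step: gluing the fiberwise rational functions $R_t$ into a single rational function $R$ on $X$. My plan is to work on the incidence variety $\mathcal{X} = \{(x,t) : x \in H_t\} \subset X \times \PB^1$, with the birational projection $p : \mathcal{X} \to X$ (the blow-up of $B$) and the pencil projection $q : \mathcal{X} \to \PB^1$. The pullback $F = f \circ p$ is then fiberwise rational over $q$, and to upgrade this to global rationality on $\mathcal{X}$, I would use a second, transverse pencil whose members meet each $H_t$ in a curve on which $f$ is rational (by the hypothesis and the $n=1$ case); this propagates rationality in the $t$-direction and yields a single rational function on $\mathcal{X}$, which pushes down along $p$ to the desired $R$ on $X$. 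The pole-codimension bound combines the fiberwise codim-$\geq 2$ bounds with the one-parameter family in $t$.
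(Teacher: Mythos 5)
Your induction-on-dimension plan founders exactly at the step you flag as ``the main obstacle,'' and the fix you sketch does not work. After slicing by the pencil you have, for each $t$, a rational function $R_t$ on $X\cap H_t$ agreeing with $f$ on (part of) $U\cap H_t$; but $f$ carries no a priori regularity across fibers -- it is not assumed continuous, semialgebraic, or even measurable -- so nothing constrains how $R_t$ varies with $t$: the degrees of the $R_t$ could be unbounded and their coefficients completely wild functions of $t$. Adding a second, transverse pencil only tells you that $f$ is rational along two families of curves, and ``separately rational along families of curves'' does not imply rational: the paper's Example~\ref{ex-7-4} exhibits $f(x,y)=\sqrt{x^2+y^2+1}$, rational (even regular) on every member of arbitrarily large families of curves covering the plane, yet not rational on any open set. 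The one genuine positive result of this type in the paper (Theorem~\ref{th-7-1}, via Lemmas~\ref{lem-7-6}--\ref{lem-7-8}) needs a product structure, a determinant/linear-algebra argument over a fixed basis of $\OC(X)$, a Baire-category uniformization of degrees, and -- in the general case -- Theorem~\ref{th-2-4} itself as input; none of this is supplied by the phrase ``this propagates rationality in the $t$-direction.'' There is also a smaller issue: the inductive hypothesis requires a connected smooth open set, while $U\cap H_t(\R)$ may be disconnected, and on different components the rational representations can genuinely differ (hyperbola-type examples), so even the fiberwise statement needs care.

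The paper's route is structurally different and shows what is actually needed to bridge this gap: first one manufactures some definability of $f$ on a small open set. Working in a local Nash chart, arc-regularity makes $f$ separately Nash along coordinate directions; the Siciak--B\l{}ocki theorem (Theorem~\ref{th-3-1}) gives analyticity on a smaller open set, and Bochner--Martin (Theorem~\ref{th-3-2}) upgrades this to Nash, hence semialgebraic (Lemma~\ref{lem-3-4}). Only then does Bertini enter (Proposition~\ref{prop-2-5}): the graph of $f$ lies on a hypersurface $Y\subset X\times\AB^1$, and if the projection $Y\to X$ had degree $m>1$, a suitable irreducible linear-section curve would force $f$ to be non-regular on some smooth arc, a contradiction; finally Lemma~\ref{lem-2-6} propagates the identity $f=R$ over the connected set $U$ and yields $\codim P\ge 2$. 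Your proposal contains no analogue of the first (Nash/semialgebraic) step, and without it there is no algebraic object tying the fiberwise data together, so the gluing cannot be carried out as described.
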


\subsection{ Semialgebraic case}
First  we show that Theorem~\ref{th-2-4} holds if $f$ is
assumed to be a semialgebraic function. 

In the proof of the next result, we use Bertini's theorem
\cite[Theorem~3.3.1]{bib24} to produce irreducible real algebraic
curves.

Given integers $1 \leq k \leq N$, we denote by $\Gr(k,N)$ the
Grassmann variety of $k$-dimensional linear subspaces of $\PB^N$.

\begin{proposition}\label{prop-2-5}
Let $X$ be a real algebraic variety, $U \subset X(\R)$ a nonempty smooth
open subset, and $f \colon U \to \R$ a function regular on smooth
algebraic arcs. Assume that  the function~$f$ is 
semialgebraic (so $U$ is a semialgebraic set). Then there exist a nonempty open subset $U_0 \subset U$
and a rational function $R$ on $X$ such that $U_0 \subset X \setminus
\Pole(R)$ and $f|_{U_0} = R|_{U_0}$.
\end{proposition}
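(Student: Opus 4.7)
The plan is to produce an irreducible hypersurface $S\subset X\times\AB^1$ containing the graph of $f$ over a dense open subset of $X$, with projection $\pi\colon S\to X$ generically finite of some degree $d$, and to show via Bertini that $d=1$. After replacing $X$ by an irreducible component meeting $U$ and by an affine open, one invokes the standard semialgebraic Nash stratification: there is a Zariski open dense $X'\subset X$ with $X'(\R)\subset U$ open and $f|_{X'(\R)}$ a Nash function. Hence $f$ is algebraic over the function field of $X$, its minimal polynomial defines the desired irreducible hypersurface $S$, and the graph of $f|_{X'(\R)}$ lies in $S$. If $d=1$ then $S$ is the graph of a rational function $R$ on $X$, and the proposition follows by restricting to $U_0:=X'(\R)\setminus\Pole(R)$.

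To show $d=1$, pick a very general real point $x_0\in X'(\R)$ and, assuming $\dim X\ge 2$ (the case $\dim X=1$ is immediate, since $U$ is then locally a smooth algebraic arc), apply Bertini's theorem simultaneously to $X$ and to $S$. This yields a $1$-dimensional linear complete intersection $C\subset X$ through $x_0$ such that $C$ is geometrically irreducible and smooth at $x_0$, and the preimage $\pi^{-1}(C)\subset S$ is also geometrically irreducible. The connected component of the smooth part of $C(\R)$ through $x_0$ is then a smooth algebraic arc $A$, and the arc-regularity hypothesis produces a rational function $R_C$ on $C$ with $f=R_C$ on a neighborhood of $x_0$ in $A$. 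The graph $\Gamma_{R_C}\subset C\times\AB^1$ is an irreducible curve of degree $1$ over $C$, whereas $\pi^{-1}(C)$ is irreducible of degree $d$ over $C$. Since they agree on infinitely many points of $A$, they coincide as curves in the surface $C\times\AB^1$, and comparing degrees forces $d=1$.

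The principal obstacle is the combined real Bertini step: the generic linear section $C$ must simultaneously be geometrically irreducible, pass through the chosen real point $x_0$, be smooth there (so as to carry a real smooth algebraic arc through $x_0$), and have geometrically irreducible preimage in $S$. Each of these is a generic condition on the Grassmannian of linear subspaces through $x_0$, and jointly they define a nonempty open subset of that Grassmannian because $x_0$ is a real smooth point of $X$ (which holds as $U\subset X\setminus\Sing(X)$), so real smooth arcs through $x_0$ are available on generic members of the family.
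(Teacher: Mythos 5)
Your argument is essentially the paper's own proof: semialgebraicity places the graph of $f$ over a nonempty open piece of $U$ inside an irreducible hypersurface of $X\times\AB^1$ with generically finite projection to $X$, and a Bertini curve section carrying a real smooth algebraic arc, combined with regularity of $f$ on that arc, forces the projection to have degree $1$, hence gives the rational function $R$. The only slips are minor and easily repaired: one cannot arrange $X'(\R)\subset U$ for a Zariski dense open $X'$ (a nonempty semialgebraic open $W\subset U$ on which $f$ is Nash suffices, since the conclusion is local), the real branch of $C(\R)$ through $x_0$ may be compact so one should take a small open sub-arc, and the paper sidesteps your ``Bertini through a fixed real point'' step by taking a generic linear space $L$ in the full Grassmannian and using density of such $L$ to find one whose section meets $W$ in a smooth arc.
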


\begin{proof}
By replacing $U$ with a smaller subset, we may assume that $X$ is
irreducible. The assertion holds if $\dim X \leq 1$, so suppose that $d
\coloneqq \dim X \geq 2$.

By definition of semialgebraic, there exist a nonempty semialgebraic
open subset $W \subset U$ and an irreducible hypersurface $Y \subset X
\times \AB^1$ such that the graph of $f|_W$ is contained in $Y$. Then
$f|_W$ is a rational function if and only if the first projection $\pi_1
\colon Y \to X$ is birational. Clearly, once we know that $f|_W$ is a
rational function, we immediately obtain $U_0$ and $R$ with the required
properties.

Suppose that $\pi_1 \colon Y \to X$ has degree $m>1$. Fix an embedding $X
\subset \PB^N$. By Bertini's theorem, the set 
$G^* \subset \Gr(N-d-1,N)$ 
consisting of those linear subspaces $L$ for which
$\pi_1^{-1}(X \cap L)$ is $1$-dimensional, irreducible and $\pi_1^{-1}(X
\cap L) \to X \cap L$ (the restriction of $\pi_1$) has degree $m$ is
open and dense in the Zariski topology. Thus there exists an $L \in G^*$
such that $W \cap L$ contains a smooth algebraic arc~$A$. By
construction, the graph of $f|_A$ lies on the irreducible real algebraic
curve $\pi_1^{-1}(X \cap L)$, hence $f|_A$ is not regular, a
contradiction.
\end{proof}

It is not hard to extend a rational representation from an open set to a
larger one.

\begin{lemma}\label{lem-2-6}
Let $X$ be a real algebraic variety, $U \subset X(\R)$ a connected
smooth open subset, and $f \colon U \to \R$ a function regular on smooth
algebraic arcs. Assume that there exists a nonempty open subset $U_0
\subset U$ and a rational function $R$ on $X$ such that $U_0 \subset X
\setminus \Pole(R)$ and $f|_{U_0} = R|_{U_0}$. Then $P \coloneqq U \cap
\Pole(R)$ has codimension at least $2$ and $f|_{U \setminus P} = R|_{U
\setminus P}$.
\end{lemma}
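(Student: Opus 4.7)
The plan is to reduce the lemma to a propagation statement for $f-R$ along smooth algebraic arcs inside $U$, and then to connect an arbitrary point of $U$ to $U_0$ by a finite chain of such arcs built with the help of Bertini's theorem. Since distinct irreducible components of $X$ meet only inside $\Sing(X)$, the connected smooth set $U$ lies in a single irreducible component, so I may assume $X$ is irreducible throughout.

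The local propagation I would establish first is the following. If $A$ is a smooth algebraic arc in $X(\R)$ with Zariski closure the irreducible curve $C$, and $\tilde A$ is a connected open subset of $A \cap U$ meeting $U_0$, then $f|_{\tilde A}$ is the restriction of a rational function on $C$ which coincides with $R|_C$. Indeed, the arc-regularity hypothesis produces, around each point of $\tilde A$, a local rational representation of $f|_A$ on $C$; any two such representations agree on a nonempty open overlap and, by the identity principle on the irreducible curve $C$, coincide as rational functions. Walking along the connected arc $\tilde A$ and taking a finite subcover of any compact path inside it glues these representations into a single rational function $S$ on $C$. Since $S = R|_C$ on the nonempty open set $\tilde A \cap U_0$, the identity principle forces $S = R|_C$ globally; in particular $\tilde A$ is disjoint from $\Pole(R|_C)$, and $f = R$ on $\tilde A \setminus \Pole(R)$.

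The global step is to build, for each $x \in U$, a chain of smooth algebraic arcs $A_1, \dots, A_m$ in $X(\R)$ together with connected open pieces $\tilde A_i \subset A_i \cap U$ such that $x \in \tilde A_1$, $\tilde A_m$ meets $U_0$, and $\tilde A_i \cap \tilde A_{i+1} \neq \emptyset$ for every $i$. Using connectedness of $U$, I take a continuous path from $x$ to some point of $U_0$ inside $U$, cover it by finitely many small Euclidean balls contained in $U$, and between consecutive overlap points I invoke Bertini's theorem — applied, after fixing an embedding $X \subset \PB^N$, to linear subspaces of appropriate codimension through both points — to produce an irreducible algebraic curve through both, smooth at each. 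Shrinking the balls and applying the implicit function theorem at each pair of smooth endpoints arranges that a single connected component of the real locus inside the ball contains both points, yielding $\tilde A_i$. Iterating the local propagation along the chain gives $f(x) = R(x)$ whenever $x \notin \Pole(R)$.

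Finally, to show that $P = U \cap \Pole(R)$ has codimension at least $2$, I argue by contradiction: if a codimension-$1$ irreducible component $D$ of $\Pole(R)$ met $U$, I could pick a generic $x \in D \cap U$ at which $R$ has a genuine pole, and Bertini would let me arrange the first arc $A_1$ of the chain so that its Zariski closure $C_1$ is transverse to $D$ at $x$, forcing $x \in \Pole(R|_{C_1})$; this contradicts the inclusion $\tilde A_1 \subset C_1 \setminus \Pole(R|_{C_1})$ supplied by the propagation step. The principal obstacle is the chain construction itself: Bertini delivers irreducibility and smoothness of the intermediate curves at their two marked points, but one must separately control the real topology to ensure that one connected component of the real locus inside each prescribed ball contains both endpoints. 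Once this ball-shrinking argument is in hand, everything else reduces to the rigidity of rational functions on an irreducible algebraic curve.
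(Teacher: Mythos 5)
Your local propagation step (gluing the local rational representations of $f$ along a connected piece of an arc and matching the glued function with $R$ on the Zariski closure $C$) is sound, and your codimension-$2$ argument is essentially the one in the paper. The genuine gap is in the global chaining. You link consecutive arcs only through nonempty intersections $\tilde A_i \cap \tilde A_{i+1}$, but the Zariski closures $C_i$ and $C_{i+1}$ are distinct irreducible curves, so they meet in a finite set. Hence, after you know $f=R$ on $\tilde A_m$, the glued rational function on $C_{m-1}$ is known to agree with $R|_{C_{m-1}}$ only at finitely many points, and the identity principle on $C_{m-1}$ gives nothing; the induction never advances beyond the first arc. (Since your proof of $\codim P\geq 2$ invokes the conclusion of this propagation for $\tilde A_1$, it inherits the same gap.) What is needed — and what the paper does — is to propagate through $d$-dimensional open sets rather than through arcs: cover the path by neighborhoods $U(p_0),\dots,U(p_r)$, each having the property that any two of its points lie on a common smooth algebraic arc contained in it, and prove $f|_{U(p_i)\setminus P}=R|_{U(p_i)\setminus P}$ by induction on $i$. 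In the inductive step the arc joining an arbitrary point $x\in U(p_{j+1})\setminus P$ to a point of the overlap meets the already-handled open set $U(p_j)$ in a subset that is open in the arc, hence infinite in its Zariski closure, so the identity principle on that single curve does apply.

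Relatedly, your Bertini construction of an irreducible curve through two prescribed points whose real locus connects them inside a prescribed ball — which you yourself flag as the principal obstacle — is both delicate (Bertini with assigned base points, plus control of real connectivity) and unnecessary. The paper obtains the needed ``any two points of $U(p)$ lie on a common arc inside $U(p)$'' property without Bertini: choose a real morphism $\varphi\colon X(p)\to\AB^d$ restricting to a real analytic diffeomorphism $\psi$ of a neighborhood of $p$ onto the cube $(-1,1)^d$; then $\psi^{-1}(I)$ is a smooth algebraic arc for every open interval $I$ in a smaller cube, and any two points of $U(p):=\psi^{-1}((-\varepsilon,\varepsilon)^d)$ lie on such an arc contained in $U(p)$. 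This sidesteps entirely the real-connectivity problem you would otherwise have to solve.
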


\begin{proof}
If $\dim X \leq 1$, then $f$ is a regular function, hence the assertion
holds. Suppose that $d \coloneqq \dim X \geq 2$. The Zariski closure of
$U$ in $X$ is an irreducible component of $X$, so we may assume that $X$
is irreducible.

First we prove that
\begin{equation}\label{eq-2-6-1}
f|_{U \setminus P} = R|_{U \setminus P}.
\end{equation}
Let $\A$ be the set of all smooth algebraic arcs in $X(\R)$ that are
contained in $U$. We claim that each point $p \in U$ has an arbitrarily
small open neighborhood $U(p) \subset U$ such that any two points of
$U(p)$ belong to an arc in $\A$, contained in $U(p)$. Such a
neighborhood $U(p)$ can be constructed as follows. We can find a Zariski
open neighborhood $X(p) \subset X$ of $p$, a real morphism $\varphi
\colon X(p) \to \AB^d$ and an open neighborhood $V(p) \subset U$ of $p$
such that $\varphi(V(p)) = (-1,1)^d \subset \R^d$, $\varphi(p) = 0$ and
the restriction $\psi \colon V(p) \to (-1,1)^d$ of $\varphi$ is a real
analytic diffeomorphism. If $0 < \varepsilon <1$ and $I \subset
(-\varepsilon, \varepsilon)^d$ is an open interval, then $\psi^{-1}(I)
\subset \psi^{-1}((-\varepsilon, \varepsilon)^d)$ is a smooth algebraic arc.
We can take $U(p) \coloneqq \psi^{-1}((-\varepsilon, \varepsilon)^d)$
for $0 < \varepsilon \ll 1$.

Fix a point $p_0 \in U_0$ and let $p \in U \setminus P$ be an arbitrary
point. Let $\gamma \colon [0,1] \to U$ be a continuous path with
$\gamma(0) = p_0$ and $\gamma(1) = p$. We can cover the compact set
$\gamma([0,1])$ by a finite collection of open sets $U(p_0), U(p_1),
\ldots, U(p_r)$ such that $U(p_0) \subset U_0$, $p_r = p$, and the
intersection $U(p_i) \cap U(p_{i+1})$ is nonempty for all
$i=0,\ldots,r-1$. Now we use induction on $i$ to show that
\begin{equation}\label{eq-2-6-2}
f|_{U(p_i) \setminus P} = R|_{U(p_i) \setminus P}
\end{equation}
for $i = 0, \ldots, r$. This is clear for $i=0$. Suppose that
\eqref{eq-2-6-2} holds for $i=j$, where $0 \leq j < r$. Fix a point $x_0
\in ( U(p_j) \cap U(p_{j+1}) ) \setminus P$ and let $x \in U(p_{j+1})
\setminus P$ be an arbitrary point. We choose an arc~$A$ in $\A$ such
that $A \subset U(p_{j+1})$ and $x_0, x \in A$. The functions $f|_{A
\setminus P}$, $R|_{A \setminus P}$ are regular and equal on the
nonempty open subset $U(p_j) \cap (A \setminus P)$ of $A$, hence $f|_{A
\setminus P} = R|_{A \setminus P}$ and $f(x) = R(x)$. This completes the
inductive proof of \eqref{eq-2-6-2}. Equality \eqref{eq-2-6-1} follows.

It remains to prove that $\codim P \geq 2$. Suppose to the contrary that
$\codim P = 1$. Let $B$ be an arc in $\A$ that meets $P$ transversally
at a general point. Then $f|_B$ is a regular function satisfying
\begin{equation*}
(f|_B)|_{B \setminus P} = f|_{B \setminus P} = R|_{B \setminus P},
\end{equation*}
which means that $R$ cannot have a pole along $B$, a contradiction.
\end{proof}

\subsection {Reduction to the semialgebraic case}
Our goal  now is  to reduce Theorem \ref{th-2-4} to the already known semialgebraic case.
The problem is rather subtle as illustrated by the following.
\begin{example}\label{ex-2-7}
We construct  a continuous arc-semialgebraic function $k:\R^2 \to \R$ which is not semialgebraic.
Let $\gamma >0$ be an irrational number, and set 
$$
\Gamma := \{ (x,y) \in \R^2: \,  x^\gamma -e^{-1/x} < y < x^\gamma
+e^{-1/x}, \, x \in (0,1)\}.
$$
For any semialgebraic curve $C$  there exists $\varepsilon >0$  such that
\begin{equation}\label{eq-2-7}
C\cap\Gamma\cap ((0,\varepsilon) \times \R) =\emptyset.
\end{equation}
This  follows from the Puiseux expansion of the branches (contained in
$\{x>0\} $) of $C$  at the origin.
Indeed, each such branch is of the form $y= h(x^{1/q})$, $x>0$, where $q$ is a positive integer
 and $h: (-\delta,\delta) \to \R$ is an analytic function.
  Set  $c_n = (1/n,(1/n)^\gamma)$ and choose a sequence
$r_n\searrow 0$ such that each ball $B_n: =B(c_n,r_n)\subset \Gamma$
and all these balls are disjoint.
For  $z\in B_n$ we define 
$$
k(z) = \max \{0,r_n- |z-c_n|\}
$$
and put $k\equiv 0$ on the complement of $\bigcup_{n\in \N} B_n$.  Clearly
$k:\R^2\to \R$ is continuous but not semialgebraic.
Actually, $k$ is semialgebraic on any compact semialgebraic set  $K\subset
\R^2\setminus \{(0,0)\}$.

Let $\varphi :(-1,1) \to  \R^2$ be a continuous semialgebraic  arc. Then
by \eqref{eq-2-7} its image meets only finitely many balls $B_n$, hence 
$k\circ \varphi $ is a  semialgebraic (and continuous) function.

\end{example}

For the reduction step we need a result on real analytic functions due to Siciak \cite{bib26}
and B\l{}ocki \cite{bib3}. The following theorem is a special case of
\cite[Theorem~A]{bib3}.

\begin{theorem}\label{th-3-1}
Let $f \colon U \to \R$ be a function defined on a nonempty open subset
$U \subset \R^n$. Assume that the restriction of $f$ is analytic on any
open interval contained in $U$ and parallel to one of the coordinate
axes. Then there exists a nonempty open subset $U_0 \subset U$ such that
the function $f|_{U_0}$ is analytic. \qed
\end{theorem}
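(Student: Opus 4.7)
The plan is to use a Baire category argument combined with one-variable Cauchy estimates, in the spirit of the classical Hartogs--Osgood theorem for separately analytic functions. We may assume $U$ is bounded. For each $x \in U$ and each coordinate index $i$, separate analyticity of $t \mapsto f(x + t e_i)$ gives a positive radius $\rho_i(x)$ on which this map is the sum of its convergent Taylor series; in particular the partial derivatives $\partial_i^k f(x)$ are well defined for all $k \geq 0$.

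For each positive integer $N$, set
\[
F_N = \bigl\{ x \in U : |\partial_i^k f(x)| \leq N^{k+1} k! \text{ for all } i \in \{1,\ldots,n\} \text{ and all } k \geq 0 \bigr\}.
\]
The one-variable Cauchy estimates applied pointwise give $U = \bigcup_N F_N$, and each $F_N$ is closed in $U$ because its defining inequalities are equivalent to uniform convergence of the Taylor expansions along each coordinate line, a condition preserved under limits. By the Baire category theorem some $F_N$ has nonempty interior and hence contains an open polydisk $D = \prod_i (a_i - r, a_i + r)$ around a point $a$.

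On $D$ one constructs the full mixed partial derivatives $\partial^\alpha f(a)$ by iterating separate analyticity one coordinate at a time. The key observation is that if $h$ is separately analytic on $D$ with $|\partial_1^k h| \leq N^{k+1} k!$ on $D$, then each Taylor coefficient $\tfrac{1}{k!}\partial_1^k h(x_1, x_2, \dots, x_n)$ is again a separately analytic function of the remaining variables, with comparable uniform bounds recovered from Cauchy's integral representation along the $x_1$-axis. Iterating through the $n$ coordinates produces $\partial^\alpha f(a)$ together with an estimate of the form $|\partial^\alpha f(a)| \leq (cN)^{|\alpha|+n} \alpha!$ for an absolute constant $c$, so the multivariable Taylor series around $a$ converges absolutely on a smaller polydisk $D' \subset D$ to a real-analytic function $g$. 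One then checks $g = f$ on $D'$ by restricting to each coordinate line through $a$ and invoking uniqueness of one-variable Taylor expansions, then propagating equality along broken lines parallel to the axes, which connect any two points of $D'$.

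The main obstacle is the inductive construction of the mixed partials with uniform bounds and the verification that $F_N$ is closed: both require carefully tracking how analyticity together with a uniform coefficient bound in one variable transfers to the Taylor coefficients viewed as functions of the remaining variables. Everything else reduces to standard majorization and to the uniqueness of analytic continuation along coordinate axes.
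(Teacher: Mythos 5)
The paper does not actually prove this statement: Theorem~\ref{th-3-1} is quoted as a special case of B\l{}ocki's Theorem~A (and of Siciak's earlier work) on singular sets of separately analytic functions, which is why it ends with an immediate \qed. Your attempt is therefore a from-scratch proof of a genuinely hard result, and as written it has two gaps that sit exactly at the heart of the difficulty. First, the closedness of $F_N$ is unjustified. The quantities $\partial_i^k f(x)$ are defined through the one-variable restrictions of $f$ to coordinate lines, and a separately analytic function need not be continuous in the transverse directions (e.g.\ $f(x,y)=x^2y^2/(x^4+y^4)$, $f(0,0)=0$, is separately analytic, bounded, and discontinuous at the origin); hence $x\mapsto \partial_i^k f(x)$ has no a priori regularity, and your argument that the defining inequalities are ``preserved under limits'' tacitly assumes that $f$ restricted to the line through a limit point is controlled by its restrictions to nearby parallel lines --- precisely the joint regularity that is to be proven. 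Without closedness, Baire only yields an $F_N$ whose closure contains an open set, and the bounds do not transfer to that closure for the same reason.

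Second, the induction step is circular: to say that $\tfrac{1}{k!}\partial_1^k f(x_1,x')$ is separately analytic in $x'$ ``with comparable uniform bounds recovered from Cauchy's integral representation along the $x_1$-axis'' you would need the holomorphic extension in the complexified $x_1$ to depend analytically (or at least continuously) on $x'$, which again is the conclusion, not a hypothesis. Pointwise, the coefficient is a limit of divided differences of $f$ in $x_1$; each divided difference is analytic in $x'$, but pointwise limits of analytic functions need not be analytic, so no separate analyticity of the coefficient functions follows. This is exactly why the real separately analytic case cannot be handled by the naive Hartogs--Osgood scheme (which works for separately holomorphic functions, where continuity on slices and Osgood's lemma are available): even local boundedness, which a corrected Baire argument can deliver, does not imply analyticity in the real setting, as the example above shows. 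The known proofs (Siciak, B\l{}ocki) complexify in each variable and use potential-theoretic tools (Bernstein--Walsh/Leja-type polynomial estimates, cross theorems) to pass from bounds on a non-polar set of parameters to a joint holomorphic extension; some such input would have to replace your two unjustified steps.
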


Let $X$ be a real algebraic variety, and $U \subset X(\R)$ a
semialgebraic smooth open subset. Recall that a function $f \colon U \to
\R$ is called a~\emph{Nash function} (or an \emph{algebraic function})
if it is analytic and semialgebraic \cite{bib4}.

The following result is contained in \cite[p.~202, Theorem~6]{bib8}.

\begin{theorem}\label{th-3-2}
Let $U = (a_1, b_1) \times \cdots \times (a_d, b_d) \subset \R^d$ be the
product of open intervals and let $f \colon U \to \R$ be an analytic
function. Assume that the restriction of $f$ is a Nash function on any
open interval contained in $U$ and parallel to one of the coordinate
axes. Then $f$ is a  Nash function. \qed
\end{theorem}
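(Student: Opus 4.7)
The plan is to proceed by induction on the dimension $d$. The base case $d=1$ is immediate: applying the hypothesis to the single coordinate direction $x_1$ is exactly the conclusion.

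For the inductive step, fix $d \geq 2$ and assume the theorem in $d-1$ variables. My first move is, for each $x_d \in (a_d, b_d)$, to consider the slice $g_{x_d}(x_1, \ldots, x_{d-1}) := f(x_1, \ldots, x_{d-1}, x_d)$. This slice is analytic on the $(d-1)$-dimensional box, and its restrictions to lines parallel to the first $d-1$ coordinate axes are Nash, since they coincide with restrictions of $f$ to such lines in $\R^d$. By the inductive hypothesis, each $g_{x_d}$ is Nash, and therefore admits a monic minimal annihilating polynomial of some $Y$-degree $m(x_d)$, which I write as $Y^{m(x_d)} + \sum_{j<m(x_d)} a_j(x_1,\ldots,x_{d-1}; x_d) Y^j$.

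Next, I would argue that on a dense open subbox $U' \subset U$ the integer $m(x_d)$ is constant. It is upper semicontinuous and integer-valued, and its jumps lie on a proper analytic subset because $f$ is analytic. After shrinking to $U'$, set $m := m(x_d)$. The coefficients $a_j$ can be expressed as elementary symmetric functions of the ``conjugates'' of $f$ over the appropriate function field, so they depend analytically on all $d$ variables. Moreover, the hypothesis applied along lines parallel to the $x_d$-axis tells us that $f(x_1, \ldots, x_{d-1}, \cdot)$ is Nash, whence by the symmetric-function relation each $a_j(x; \cdot)$ is Nash in $x_d$ as well.

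The main obstacle is the concluding step: promoting the coefficients $a_j$---which are known to be Nash in $(x_1, \ldots, x_{d-1})$ for each fixed $x_d$ and Nash in $x_d$ for each fixed $(x_1, \ldots, x_{d-1})$---to functions that are jointly Nash on $U'$, after which a resultant/elimination argument would yield a single polynomial $Q(X_1, \ldots, X_d, Y)$ annihilating $f$. The inductive hypothesis cannot be applied directly to the $a_j$, since they still live on a $d$-dimensional box, so bypassing this apparent circularity requires a genuinely new ingredient. As carried out in \cite[p.~202, Theorem~6]{bib8}, one combines the uniform degree bound $m$ with an elimination-theoretic Noetherian argument on the ideal of polynomials annihilating $f$, exploiting analytic continuation to package the pointwise algebraic relations into a single global one.
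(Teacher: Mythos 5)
The paper itself contains no proof of this statement: it is stated with a q.e.d.\ symbol as a quotation of \cite[p.~202, Theorem~6]{bib8}, so the only thing your write-up can be measured against is whether it stands on its own. It does not. The step you yourself label the main obstacle---passing from the $a_j$ (or from $f$) being Nash in $(x_1,\dots,x_{d-1})$ for each fixed $x_d$ and Nash in $x_d$ for each fixed $(x_1,\dots,x_{d-1})$ to a single nonzero polynomial $Q(X_1,\dots,X_d,Y)$ with $Q(x,f(x))\equiv 0$ on $U$---is precisely the content of the theorem, not a technical finish. Applied to the coefficients $a_j$ the problem is in no way easier than the original one (they are again analytic, separately Nash functions of $d$ variables), which is why your induction on $d$ cannot close; and resolving it by writing ``as carried out in \cite[p.~202, Theorem~6]{bib8}'' makes the argument circular as a proof of the statement. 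A genuine proof must supply exactly that missing ingredient, e.g.\ the Bochner--Martin device: bound the degrees on a set of parameters that is dense in some subinterval (Baire category), view the coefficient vectors of annihilating polynomials of bounded degree as a linear-algebra problem, and extract a relation with coefficients that are themselves Nash---this is the same mechanism the present paper implements for the rational analogue in Lemmas~\ref{lem-7-7} and~\ref{lem-7-8}.

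There are also unproved assertions in the portion you do carry out: the upper semicontinuity and generic constancy of the minimal degree $m(x_d)$ is claimed without argument (the Baire-category covering by the sets $\{x_d: m(x_d)\le m\}$ is the standard substitute), and the joint analyticity of the coefficients $a_j$ ``as symmetric functions of the conjugates'' is not justified from the data you have. By contrast, the shrinking to a dense open sub-box $U'$ is harmless and fixable: once the analytic function $f$ satisfies a nontrivial polynomial relation on a nonempty open subset of the connected set $U$, the relation propagates to all of $U$ by analytic continuation, and an analytic function satisfying a nontrivial polynomial equation on a connected open semialgebraic set is Nash (see \cite[Chapter~8]{bib4}). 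So the structure of your sketch is reasonable and follows the classical line, but as it stands it has a genuine gap at the decisive step, which is outsourced to the very result being proved.
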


We only need the following straightforward consequence of
Theorems~\ref{th-3-1} and~\ref{th-3-2}.

\begin{corollary}\label{cor-3-3}
Let $f \colon U \to \R$ be a function defined on a nonempty open subset
$U \subset \R^n$. Assume that the restriction of $f$ is a Nash function
on any open interval contained in $U$ and parallel to one of the
coordinate axes. Then there exists a nonempty semialgebraic open subset
$U_0 \subset U$ such that the restriction $f|_{U_0}$ is a Nash function.
\qed
\end{corollary}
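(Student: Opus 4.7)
The plan is to combine Theorems~\ref{th-3-1} and~\ref{th-3-2} in sequence, with a shrinking step in between to produce the required product-of-intervals domain.

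First I would apply Theorem~\ref{th-3-1}. Since every Nash function is in particular analytic, the hypothesis of Theorem~\ref{th-3-1} is satisfied: the restriction of $f$ to any open interval contained in $U$ and parallel to a coordinate axis is analytic. Theorem~\ref{th-3-1} then provides a nonempty open subset $U_1 \subset U$ on which $f|_{U_1}$ is analytic.

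Next I would pick a nonempty semialgebraic open set $U_0 \subset U_1$ of the form $U_0 = (a_1, b_1) \times \cdots \times (a_n, b_n)$; this is possible because $U_1$ is a nonempty open subset of $\R^n$. The restriction $f|_{U_0}$ is analytic on the product of intervals $U_0$, and the restriction of $f|_{U_0}$ to any open interval in $U_0$ parallel to a coordinate axis coincides with the restriction of $f$ to that interval, which is a Nash function by hypothesis. Applying Theorem~\ref{th-3-2} to $f|_{U_0}$ on $U_0$ then yields that $f|_{U_0}$ is a Nash function, which is exactly the conclusion sought.

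There is essentially no real obstacle here; the content is entirely supplied by the two cited theorems. The only point requiring (trivial) care is that the hypotheses of each theorem must be checked after the corresponding shrinking, which is immediate: analyticity is inherited by restriction and the Nash property on axis-parallel intervals, being an assumption about all such intervals in $U$, automatically persists on $U_1$ and $U_0$.
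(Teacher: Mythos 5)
Your proposal is correct and follows exactly the route intended by the paper, which gives no written proof beyond stating that the corollary is a straightforward consequence of Theorems~\ref{th-3-1} and~\ref{th-3-2}: apply Theorem~\ref{th-3-1} to get an open set of analyticity, shrink to an open box (which is semialgebraic), and then apply Theorem~\ref{th-3-2} there. The hypothesis checks you note (Nash implies analytic, and the axis-parallel interval condition persists under shrinking) are precisely the only points needing verification.
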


After these preparations, we can prove a variant of
Corollary~\ref{cor-3-3} in the framework of real algebraic varieties.

\begin{lemma}\label{lem-3-4}
Let $X$ be a real algebraic variety, $U \subset X(\R)$ a nonempty smooth
open subset, and $f \colon U \to \R$ a function regular on smooth
algebraic arcs. Then there exists a nonempty semialgebraic open subset
$U_0 \subset U$ such that the restriction $f|_{U_0}$ is a Nash function.
\end{lemma}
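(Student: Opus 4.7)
The plan is to transfer the problem to an open box in $\R^d$ via algebraic local coordinates, where Corollary~\ref{cor-3-3} becomes directly applicable, and then pull the conclusion back to $U$.

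I would start by fixing a point $p\in U$ and constructing local algebraic coordinates exactly as in the proof of Lemma~\ref{lem-2-6}: since $U$ avoids $\Sing(X)$, one can find regular functions $y_1,\dots,y_d$ on a Zariski open neighborhood $X(p)\subset X$ of $p$ whose differentials at $p$ are linearly independent, so that the morphism $\varphi=(y_1,\dots,y_d)\colon X(p)\to\AB^d$ is \'etale at $p$. After shrinking, I obtain a semialgebraic open neighborhood $V(p)\subset U$ of $p$ such that the restriction $\psi\colon V(p)\to(-1,1)^d$ of $\varphi$ is a semialgebraic real analytic diffeomorphism with $\psi(p)=0$; in particular $\psi^{-1}$ is a Nash map.

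Next, I claim that for every open interval $I\subset(-1,1)^d$ parallel to a coordinate axis, the preimage $\psi^{-1}(I)$ is a smooth algebraic arc in $X(\R)$ contained in $U$. Indeed, if $I$ is parallel to the $i$-th axis with the remaining coordinates fixed at values $c_j$, then $\psi^{-1}(I)\subset\varphi^{-1}(L)$, where $L\subset\AB^d$ is the corresponding algebraic line. The algebraic set $\varphi^{-1}(L)$ is cut out inside $X(p)$ by the $d-1$ regular functions $y_j-c_j$ ($j\neq i$), and these form a regular sequence at every point of $\psi^{-1}(I)$ because $\varphi$ is \'etale there. Hence the irreducible component of $\varphi^{-1}(L)$ meeting $\psi^{-1}(I)$ has Zariski closure $C\subset X$ an irreducible algebraic curve with $\psi^{-1}(I)\subset C\setminus\Sing(C)$, and $\psi^{-1}(I)$ is homeomorphic to $\R$ via $\psi$, so it is indeed a smooth algebraic arc. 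Since $U$ is open, Definition~\ref{def-2-1} then gives that $f|_{\psi^{-1}(I)}$ is a regular function, and in particular Nash.

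To finish, I would define $g\colon(-1,1)^d\to\R$ by $g:=f\circ\psi^{-1}$. For each axis-parallel open interval $I\subset(-1,1)^d$, the restriction $g|_I=f|_{\psi^{-1}(I)}\circ(\psi^{-1}|_I)$ is the composition of a Nash function on $\psi^{-1}(I)$ with the Nash map $\psi^{-1}|_I$, hence Nash. Corollary~\ref{cor-3-3} then supplies a nonempty semialgebraic open subset $V_0\subset(-1,1)^d$ on which $g|_{V_0}$ is Nash, and setting $U_0:=\psi^{-1}(V_0)\subset V(p)\subset U$ yields $f|_{U_0}=g|_{V_0}\circ\psi|_{U_0}$, which is a composition of Nash functions and therefore Nash. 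The main thing to be careful about is the second step: verifying that $\psi^{-1}(I)$ is a genuine smooth algebraic arc, not merely a connected smooth semialgebraic curve; this is precisely where the \'etale property of $\varphi$ along $\psi^{-1}(I)$ is needed, to guarantee that the equations $y_j=c_j$ cut out an irreducible algebraic curve smooth along $\psi^{-1}(I)$.
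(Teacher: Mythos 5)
Your argument is correct and follows essentially the same route as the paper: shrink to a neighborhood on which an algebraic morphism $\varphi$ restricts to a Nash (semialgebraic analytic) isomorphism $\psi$ onto an open subset of $\R^d$, observe that preimages of axis-parallel intervals are smooth algebraic arcs so that $f\circ\psi^{-1}$ is Nash on such intervals, apply Corollary~\ref{cor-3-3}, and pull the resulting set back by $\psi$. The only difference is cosmetic: you work locally at a point $p$ with a box $(-1,1)^d$ and spell out (via the \'etale property of $\varphi$ along $\psi^{-1}(I)$) why $\psi^{-1}(I)$ is genuinely a smooth algebraic arc, a verification the paper leaves implicit in the proofs of Lemmas~\ref{lem-2-6} and~\ref{lem-3-4}.
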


\begin{proof}
By replacing $U$ with a smaller subset, we may assume that $U$ is
semialgebraic and $X$ is irreducible. Setting $d \coloneqq \dim X$ and
shrinking $U$ further if necessary, we can find a nonempty Zariski open
subset $X^0 \subset X$, a real morphism $\varphi \colon X^0 \to \AB^d$
and a semialgebraic open subset $V \subset \R^d$ such that $U \subset
X^0$, $\varphi(U) = V$ and the restriction $\psi \colon U \to V$ of
$\varphi$ is a Nash isomorphism. For any open interval $I \subset V$,
the inverse image $A \coloneqq \psi^{-1}(I)$ is a smooth algebraic arc in
$X(\R)$, hence the restriction $f|_A$ is a regular function. It follows
that $(f \circ \psi^{-1})|_I$ is a Nash function. By
Corollary~\ref{cor-3-3}, there exists a nonempty semialgebraic open
subset $V_0 \subset V$ such that $(f \circ \psi^{-1})|_{V_0}$ is a Nash
function. Thus, $U_0 \coloneqq \psi^{-1}(V_0)$ is a nonempty
semialgebraic open subset of $U$ and the restriction $f|_{U_0}$ is a
Nash function, as required.
\end{proof}

\begin{proof}[Proof of Theorem~\ref{th-2-4}]
It suffices to combine Lemma~\ref{lem-3-4}, Proposition~\ref{prop-2-5}
and Lemma~\ref{lem-2-6}.
\end{proof}

\section{Consequences of Theorem~\ref{th-2-4}}\label{sec-4}
\subsection{Hereditarily
rational functions}
First we record a straightforward characterization of hereditarily
rational functions.

\begin{proposition}\label{prop-4-1}
Let $X$ be a real algebraic variety, and $W \subset X(\R)$ some subset.
For a function $f \colon W \to \R$, the following conditions are
equivalent:
\begin{conditions}
\item\label{prop-4-1-a} $f$ is hereditarily rational.
\item\label{prop-4-1-b} There exists a sequence of sets
\begin{equation*}
W = W_0 \supset W_1 \supset \ldots \supset W_m = \varnothing
\end{equation*}
such that, if $Y_i$ is the Zariski closure of $W_i$ in $X$, then $W_i =
W \cap Y_i$, $Y_i \setminus Y_{i+1}$ is Zariski dense in $Y_i$, $W_i
\setminus W_{i+1} \subset Y_i \setminus \Sing(Y_i)$ and $f|_{W_i \setminus
W_{i+1}}$ is a regular function for $i = 0, \ldots, m-1$.

\item\label{prop-4-1-c} There exists a sequence of sets
\begin{equation*}
W = W_0 \supset W_1 \supset \ldots \supset W_m = \varnothing
\end{equation*}
such that, if $Y_i$ is the Zariski closure of $W_i$ in $X$, then $W_i =
W \cap Y_i$ and $f|_{W_i \setminus W_{i+1}}$ is a regular function for
$i = 0, \ldots, m-1$.
\end{conditions}
\end{proposition}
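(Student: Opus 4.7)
The implication (\ref{prop-4-1-b}) $\Rightarrow$ (\ref{prop-4-1-c}) is immediate, so I would focus on (\ref{prop-4-1-a}) $\Rightarrow$ (\ref{prop-4-1-b}) and (\ref{prop-4-1-c}) $\Rightarrow$ (\ref{prop-4-1-a}).

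For (\ref{prop-4-1-a}) $\Rightarrow$ (\ref{prop-4-1-b}), the plan is a Noetherian induction. Set $W_0 := W$ and let $Y_0$ be its Zariski closure in $X$. Assuming $W_i$ and $Y_i$ have been built with $W_i = W \cap Y_i$, apply hereditary rationality to $f|_{W_i}$ to obtain a Zariski open dense $U_i \subset Y_i$ — shrunk if necessary to lie inside $Y_i \setminus \Sing(Y_i)$ — and a regular function $F_i$ on $U_i$ with $f = F_i$ on $W_i \cap U_i$. Define $W_{i+1} := W_i \setminus U_i$ and let $Y_{i+1}$ be its Zariski closure. Then the required properties are routine to verify: $W_{i+1} = W \cap Y_{i+1}$ (using $Y_{i+1} \subset Y_i \setminus U_i$); $Y_i \setminus Y_{i+1} \supset U_i$ is Zariski dense in $Y_i$; $W_i \setminus W_{i+1} = W_i \cap U_i \subset Y_i \setminus \Sing(Y_i)$; and $f|_{W_i \setminus W_{i+1}}$ is regular since it agrees with $F_i$ on this set. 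The strictly descending chain $Y_0 \supsetneq Y_1 \supsetneq \cdots$ must terminate by noetherianity, yielding $Y_m = \varnothing$.

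For (\ref{prop-4-1-c}) $\Rightarrow$ (\ref{prop-4-1-a}), let $Z \subset X$ be an arbitrary subvariety; I must show $f|_{W \cap Z}$ is rational. The intersected sequence $W'_i := W_i \cap Z$, with $Y'_i$ its Zariski closure, inherits the hypotheses of (\ref{prop-4-1-c}) for $W' := W \cap Z$: one checks $W'_i = W' \cap Y'_i$, while regularity of $f|_{W_i \setminus W_{i+1}}$ restricts to regularity of $f|_{W'_i \setminus W'_{i+1}}$. Thus it suffices to deduce, from (\ref{prop-4-1-c}), that $f|_W$ is rational. Write $Y$ for the Zariski closure of $W$, and decompose it into irreducible components $Y^{(1)}, \ldots, Y^{(s)}$, each with $W \cap Y^{(\ell)}$ Zariski dense in $Y^{(\ell)}$. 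For each $\ell$ let $i(\ell)$ be the largest index with $Y^{(\ell)} \subset Y_{i(\ell)}$; then the set $W \cap Y^{(\ell)} \setminus Y_{i(\ell)+1}$ lies in $W_{i(\ell)} \setminus W_{i(\ell)+1}$ and, by irreducibility of $Y^{(\ell)}$, is Zariski dense in $Y^{(\ell)}$. The local regular representations furnished by regularity of $f|_{W_{i(\ell)} \setminus W_{i(\ell)+1}}$, restricted to $Y^{(\ell)}$, agree on overlaps because any two such representations differ by a regular function on a Zariski open subset of the irreducible $Y^{(\ell)}$ that vanishes on a Zariski dense subset, hence vanishes identically. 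Gluing produces a regular $G^{(\ell)}$ on a Zariski open dense $V^{(\ell)} \subset Y^{(\ell)}$ equal to $f$ on $V^{(\ell)} \cap W \setminus Y_{i(\ell)+1}$. Finally, setting $Y^{(\ell)}_{\mathrm{gen}} := Y^{(\ell)} \setminus \bigcup_{\ell' \neq \ell} Y^{(\ell')}$, the disjoint Zariski open pieces $V^{(\ell)} \cap Y^{(\ell)}_{\mathrm{gen}} \setminus Y_{i(\ell)+1}$ combine into a Zariski open dense subset of $Y$ on which the $G^{(\ell)}$ patch together into a single regular function representing $f$.

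The only real obstacle is the gluing step in (\ref{prop-4-1-c}) $\Rightarrow$ (\ref{prop-4-1-a}): compatibility of local representations within one component $Y^{(\ell)}$ relies on irreducibility together with Zariski density of the agreement set, while possible mismatch between $G^{(\ell)}$ and $G^{(\ell')}$ on intersections $Y^{(\ell)} \cap Y^{(\ell')}$ is sidestepped by restricting to the generic parts, which is harmless since rationality only demands a Zariski open dense subset.
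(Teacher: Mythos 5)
Your proposal is correct and follows essentially the same route as the paper: the construction for (\ref{prop-4-1-a})~$\Rightarrow$~(\ref{prop-4-1-b}) is the paper's (removing the dense open set of regular representation, away from $\Sing(Y_i)$, and descending by noetherianity/dimension), and your (\ref{prop-4-1-c})~$\Rightarrow$~(\ref{prop-4-1-a}) uses the same key idea of taking, for each irreducible component of the relevant Zariski closure, the maximal index $i$ with the component contained in $Y_i$, getting density from irreducibility and regularity from $f|_{W_i\setminus W_{i+1}}$. Your preliminary step of intersecting the sequence with $Z$, and your explicit gluing over the generic parts of the components, merely spell out what the paper does directly with the components of the closure of $W\cap Z$ and leaves implicit in ``it follows that $f|_{W\cap Z}$ is rational.''
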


\begin{proof}
To prove (\ref{prop-4-1-a})~$\Rightarrow$~(\ref{prop-4-1-b}), suppose
that (\ref{prop-4-1-a}) holds, set $W_0 \coloneqq W$ and denote by $Y_0$
the Zariski closure of $W_0$ in $X$. Since $f$ is a rational function,
we can find a real subvariety $Z_1 \subset Y_0$ such that $Y_0 \setminus
Z_1$ is Zariski dense in $Y_0$, $Y_0 \setminus Z_1 \subset Y_0 \setminus
\Sing(Y_0)$ and $f|_{W_0 \setminus Z_1}$ is the restriction of a regular
function on $Y_0 \setminus Z_1$. Set $W_1 \coloneqq W \cap Z_1$ and let
$Y_1$ be the Zariski closure of $W_1$ in $X$. Then $Y_1 \subset Z_1$,
$W_1 = W \cap Y_1$ and $f|_{W_0 \setminus W_1}$ is a regular function.
Note that $\dim Y_0 > \dim Y_1$. Since $f|_{W_1}$ is a rational
function, we can repeat this construction to get $W_2$, and so on. The
process terminates after finitely many steps with $W_m = \varnothing$,
which proves (\ref{prop-4-1-b}).

It is clear that (\ref{prop-4-1-b})~$\Rightarrow$~(\ref{prop-4-1-c}).

Suppose that (\ref{prop-4-1-c}) holds. Let $Z \subset X$ be a real
subvariety, $S$ the Zariski closure of $W \cap Z$ in~$X$, and $T$ an
irreducible component of $S$. We have $T \subset Y_i$ and $T^0 \coloneqq
T \setminus Y_{i+1} \neq \varnothing$ for some $i$. Clearly, $T^0$ is
Zariski open dense in $T$. Furthermore, $W \cap T^0 \subset W_i
\setminus W_{i+1}$, hence $f|_{W \cap T^0}$ is a regular function. It
follows that $f|_{W \cap Z}$ is a rational function. Thus,
(\ref{prop-4-1-c}) implies (\ref{prop-4-1-a}).
\end{proof}
\subsection{Functions defined on open subsets}

To derive from Theorem~\ref{th-2-4} some global results, we have to deal
with functions defined on smooth open sets that are not necessarily
connected.

\begin{lemma}\label{lem-4-2}
Let $X$ be a real algebraic variety, $U \subset X(\R)$ a smooth open
subset, and $f \colon U \to \R$ a function rational on algebraic curves.
Let $\{U_i\}$ be the family of all connected components of~$U$. Assume
that the restrictions $f|_{U_i}$ are rational functions. Then the
following hold:
\begin{nconditions}
\item\label{lem-4-2-1} There exist a rational function $R$ on $X$ and a
family $\{X_i^0\}$ of Zariski open dense subsets of $X$ such that $X_i^0
\subset X \setminus \Pole(R)$ and $f|_{U_i \cap X_i^0} = R|_{U_i \cap
X_i^0}$ for all $i$.

\item\label{lem-4-2-2} $f$ is a rational function if the family
$\{U_i\}$ is finite.

\item\label{lem-4-2-3} $f$ is a rational function if it is regular on
smooth algebraic arcs.
\end{nconditions}
\end{lemma}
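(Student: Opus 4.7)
My plan is as follows. Each connected component $U_i$ is a nonempty open subset of $X(\R) \setminus \Sing(X)$, hence lies in a unique irreducible component $Z_i$ of $X$ (since smooth points belong to only one component) and is Zariski dense in $Z_i$. The hypothesis that $f|_{U_i}$ is rational then supplies a rational function $R_i$ on $Z_i$ and a Zariski dense open subset $V_i \subset Z_i \setminus \Pole(R_i)$ with $f = R_i$ on $U_i \cap V_i$. The heart of (\ref{lem-4-2-1}) is to show that if two connected components $U_a, U_b$ of $U$ lie in the same irreducible component $Z$, then $R_a = R_b$ as rational functions on $Z$. Once this is granted, one unambiguously defines a rational function $R$ on $X$ by setting $R|_Z \coloneqq R_i$ for any $U_i \subset Z$ (and $0$ on components meeting no $U_i$), and lets $X_i^0 \coloneqq (X \setminus \Pole(R)) \setminus (Z_i \setminus V_i)$, which is Zariski open and dense in $X$ and satisfies $U_i \cap X_i^0 = U_i \cap V_i$, so $f = R$ there.

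To establish the coincidence $R_a = R_b$ on $Z$, I would split on $\dim Z$. If $\dim Z = 1$, the hypothesis that $f$ is rational on algebraic curves, applied to $C = Z$, produces a single rational function on $Z$ representing $f|_{U \cap Z}$; both $R_a$ and $R_b$ agree with it on a nonempty open subset of $Z(\R)$, forcing $R_a = R_b$. If $\dim Z \geq 2$, I would embed $Z \subset \PB^N$, pick smooth points $p_a \in U_a$ and $p_b \in U_b$, and invoke Bertini's theorem, in the spirit of Proposition~\ref{prop-2-5}, to produce irreducible real algebraic curves $C \subset Z$ passing through both $p_a$ and $p_b$ and not contained in $\Pole(R_a) \cup \Pole(R_b)$. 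Each such curve meets $U_a$ and $U_b$ in nonempty open subsets of $C(\R)$, so the rationality of $f|_{U \cap C}$ forces $R_a|_C = R_b|_C$. As the linear section varies, the union of these curves sweeps out a Zariski dense subset of $Z$, giving $R_a = R_b$ on $Z$ and completing~(\ref{lem-4-2-1}).

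The remaining parts are then short. For (\ref{lem-4-2-2}), with $\{U_i\}$ finite the intersection $X^0 \coloneqq \bigcap_i X_i^0$ is still Zariski open and dense in $X$; letting $Y$ be the Zariski closure of $U$ in $X$ and $Y^0 \coloneqq Y \cap X^0$, we obtain a Zariski dense open $Y^0 \subset Y \setminus \Pole(R)$ with $f|_{U \cap Y^0} = R|_{U \cap Y^0}$, so $f$ is rational in the sense of Definition~\ref{def-1-1}. For (\ref{lem-4-2-3}), each $U_i$ is a connected smooth open subset on which $f$ is regular on smooth algebraic arcs and agrees with $R$ on the nonempty open set $U_i \cap X_i^0 \subset X \setminus \Pole(R)$; Lemma~\ref{lem-2-6} then yields $f = R$ on $U_i \setminus \Pole(R)$ for every $i$, hence $f = R$ on $U \setminus \Pole(R)$, and again $f$ is rational.

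The main obstacle I anticipate is the Bertini step in (\ref{lem-4-2-1}): exhibiting a sufficiently rich family of irreducible real algebraic curves through a prescribed pair of points $p_a, p_b$ whose union is Zariski dense in $Z$. I plan to handle this exactly as in Proposition~\ref{prop-2-5}, by parametrizing the linear subspaces $L \subset \PB^N$ of codimension $\dim Z - 1$ containing both $p_a$ and $p_b$ via an appropriate subvariety of $\Gr(N - \dim Z + 1, N)$ and arguing that the generic such $L$ cuts $Z$ in an irreducible curve, with the total family sweeping out a Zariski dense subset of $Z$.
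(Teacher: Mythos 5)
Your overall strategy---distributing the components $U_i$ among the irreducible components of $X$, gluing the componentwise rational representatives into a single $R$, getting (\ref{lem-4-2-2}) by intersecting finitely many dense open sets, and getting (\ref{lem-4-2-3}) from Lemma~\ref{lem-2-6} applied to each $U_i$---is the same as the paper's, and those parts are fine. The gap is in the key step where you prove $R_a=R_b$ for two components $U_a,U_b$ lying in the same irreducible component $Z$ with $\dim Z\ge 2$. You fix $p_a\in U_a$, $p_b\in U_b$ and claim that the good linear sections $Z\cap L$, with $L$ of dimension $N-\dim Z+1$ containing $p_a$ and $p_b$, sweep out a Zariski dense subset of $Z$. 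This already fails in the most basic case: if $Z$ is Zariski dense in $\PB^N$ (e.g.\ $X=\AB^n$ with its standard embedding, so in particular for functions on $\R^2$), then $N-\dim Z+1=1$, the sections are lines, and there is exactly one line through $p_a$ and $p_b$; its trace on $Z$ is not Zariski dense, so your argument only yields $R_a=R_b$ along that single curve, which does not force $R_a=R_b$ on $Z$ (and that unique line may itself be bad, e.g.\ contained in $\Pole(R_a)\cup\Pole(R_b)$). Even when $N-\dim Z+1\ge 2$, knowing that the \emph{generic} $L$ through $p_a,p_b$ is good does not by itself give density of the union of the good curves: the sections through a further prescribed general point of $Z$ form a positive-codimension subfamily which could a priori consist of bad members, so an incidence-variety or dimension-count argument is still missing.

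Both defects are repairable, and the paper's proof shows the cheapest repair: argue by contradiction, which requires only \emph{one} curve rather than a sweeping family. If $R_a\ne R_b$ on $Z$, then, since $U_b$ is Zariski dense in $Z$, they differ at some real point $x_0\in U_b$ off their polar sets, hence on a whole neighborhood $U(x_0)\subset U_b$. By Bertini one finds a single irreducible real curve $C=Z\cap L$, for $L$ generic through one point of $U(x_0)$ and one point of $U_a\cap V_a$ (a line already suffices dimensionally), meeting both open sets in smooth arcs and not contained in the bad locus. The hypothesis that $f$ is rational on algebraic curves provides one rational function $G$ on $C$ with $f=G$ on $U\cap C^0$; comparing on infinite subsets of the two arcs gives $R_a=G=R_b$ along $C$, contradicting $R_a\ne R_b$ throughout $U(x_0)$. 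Alternatively you could salvage your sweeping argument by re-embedding $Z$ via a Veronese map so that $N\ge\dim Z+2$ and then supplying the incidence argument, but as written the density claim is unjustified precisely in the cases $X=\AB^n$ and $X=\PB^n$.
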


\begin{proof}
\setcounter{equation}{\value{nconditionsi}}
It is clear that (\ref{lem-4-2-1}) implies (\ref{lem-4-2-2}).
Furthermore, according to Lemma~\ref{lem-2-6}, (\ref{lem-4-2-1}) also
implies~(\ref{lem-4-2-3}).

In the proof of (\ref{lem-4-2-1}), we may assume without loss of
generality that $X$ is irreducible. The case ${\dim X \leq 1}$ is obvious,
$f$ being rational on algebraic curves. Suppose that ${d \coloneqq \dim X
\geq 2}$.

For each $i$, there exist a Zariski open dense subset $X_i^0 \subset X$
and a regular function $F_i$ on $X_i^0$ such that
\begin{equation}\label{eq-4-2-4}
f = F_i\quad \textrm{on}\ U_i \cap X_i^0.
\end{equation}
It remains to prove that for any $j$, the equality $F_i = F_j$ holds on
$X_i^0 \cap X_j^0$, which is equivalent to proving that $F_i = F_j$ on
$U_j \cap X_i^0 \cap X_j^0$. Suppose to the contrary that $F_i (x_0)
\neq F_j(x_0)$ for some $x_0 \in U_j \cap X_i^0 \cap X_j^0$. Then $F_i (x) \neq F_j(x)$
for all $x$ in an open neighborhood $U(x_0) \subset U_j
\cap X_i^0 \cap X_j^0$ of $x_0$. By Bertini's theorem, there exists an
irreducible real algebraic curve $C \subset X$ such that the
intersections $U(x_0) \cap C$ and $U_i \cap X_i^0 \cap C$ contain some
smooth algebraic arcs of $C(\R)$ (after fixing an embedding $X \subset
\PB^N$, such a curve $C$ is obtained by intersecting $X$ with a suitable
linear subspace $L \subset \PB^N$ of dimension $N - d -1$). Since $f$ is
rational on algebraic curves, there exists a regular function $G$
defined on a Zariski open dense subset $C^0 \subset C$ with
\begin{equation}\label{eq-4-2-5}
f = G \quad \textrm{on}\ U \cap C^0.
\end{equation}
From \eqref{eq-4-2-4} and \eqref{eq-4-2-5}, we get $F_i = G$ on $U_i \cap
X_i^0 \cap C^0$, which in turn implies that $F_i = G$ on $X_i^0 \cap
C^0$, hence
\begin{equation}\label{eq-4-2-6}
F_i = G\quad \textrm{on}\ U(x_0) \cap C^0.
\end{equation}
On the other hand, \eqref{eq-4-2-4} and \eqref{eq-4-2-5} also yield $F_j =
G$ on $U_j \cap X_j^0 \cap C^0$, hence
\begin{equation}\label{eq-4-2-7}
F_j = G\quad \textrm{on}\ U(x_0) \cap C^0.
\end{equation}
By \eqref{eq-4-2-6} and \eqref{eq-4-2-7}, $F_i = F_j$ on $U(x_0) \cap C^0$, a
contradiction.
\end{proof}

We now present the first application of Theorem~\ref{th-2-4}.

\begin{theorem}\label{th-4-3}
Let $X$ be a real algebraic variety, ${U \subset X(\R)}$ an open subset,
and $f \colon U \to \R$ a~function rational on algebraic curves. Assume
that $f$ is regular on smooth algebraic arcs. Then $f$ is hereditarily
rational.
\end{theorem}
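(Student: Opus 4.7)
The plan is to apply Proposition~\ref{prop-4-1} as the working characterization of hereditarily rational functions and to induct on $d \coloneqq \dim Y$, where $Y$ is the Zariski closure of $U$ in $X$. The case $d \leq 0$ is immediate. For the inductive step it suffices to exhibit a proper subvariety $Z \subsetneq Y$, with $Z \supset \Sing(Y)$ and $\dim Z < d$, such that $f|_{U \setminus Z}$ is the restriction of a regular function on $Y \setminus Z$. Then $U \cap Z$ is an open subset of $Z(\R)$ whose Zariski closure has dimension at most $\dim Z < d$; the restriction $f|_{U \cap Z}$ inherits from $f$ both hypotheses, because every irreducible real algebraic curve and every smooth algebraic arc in $Z$ is automatically one in $X$; so by the inductive hypothesis $f|_{U \cap Z}$ is hereditarily rational, and prepending $W_0 \coloneqq U$, $W_1 \coloneqq U \cap Z$ to the resulting chain produces the chain required by Proposition~\ref{prop-4-1}.

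To construct $Z$, I would replace $X$ by $Y$ so that $U$ is Zariski dense in $X$ and set $U' \coloneqq U \setminus \Sing(X)$, which is a smooth (and still Zariski dense) open subset of $X(\R)$. Applying Theorem~\ref{th-2-4} to each connected component of $U'$ shows that the restriction of $f$ to each such component is a rational function, thereby verifying the standing hypothesis of Lemma~\ref{lem-4-2}. Part~(\ref{lem-4-2-3}) of that lemma, which additionally uses that $f$ is regular on smooth algebraic arcs, then yields a single rational function $R$ on $X$ and a Zariski open dense subset $X^0 \subset X \setminus \Pole(R)$ on which $f = R$ throughout $U' \cap X^0$. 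Taking $Z \coloneqq \Sing(X) \cup (X \setminus X^0)$ gives a subvariety satisfying all requirements: neither summand contains an irreducible component of $X$, so $\dim Z < d$, and $f = R$ is regular on $U \setminus Z \subset X \setminus \Pole(R)$.

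The main subtle point I anticipate is the transition from the per-component rational representations supplied by Theorem~\ref{th-2-4} to a single global rational function $R$ on $X$ valid on all of $U'$; this is precisely what Lemma~\ref{lem-4-2}~(\ref{lem-4-2-3}) is designed to accomplish. The hypothesis that $f$ is rational on algebraic curves is used there to force the component-wise rational functions to coincide (via curves cutting across different components, produced by Bertini), and the hypothesis that $f$ is regular on smooth algebraic arcs is used to propagate the equality $f = R$ from a Zariski open dense subset of a connected component to the whole component. Once the global $R$ is in hand, the remainder of the argument is routine bookkeeping with Proposition~\ref{prop-4-1}.
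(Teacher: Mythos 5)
Your argument is correct, and its core---passing to the smooth locus $U'=U\setminus\Sing(X)$ after replacing $X$ by the Zariski closure of $U$, applying Theorem~\ref{th-2-4} to each connected component, and then gluing these componentwise representations into a single rational function via Lemma~\ref{lem-4-2}(\ref{lem-4-2-3}), which uses both hypotheses exactly as you describe---is precisely the paper's proof. The only divergence is in how ``hereditarily'' is handled: the paper notes in one line that it suffices to prove $f$ rational, since the two hypotheses are inherited by every restriction $f|_{U\cap Z}$ for $Z\subset X$ a subvariety, so the rationality statement applies to each such restriction, whereas you make this reduction explicit through an induction on dimension and the assembly of a chain as in Proposition~\ref{prop-4-1}(\ref{prop-4-1-c}), much as the paper itself does for the semialgebraic analogue in Proposition~\ref{prop-4-4} and Theorem~\ref{th-4-5}; both routes are valid, yours merely being the longer bookkeeping.
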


\begin{proof}
It suffices to prove that $f$ is a rational function. By replacing $X$
with the Zariski closure of $U$ in $X$, we may assume that $U$ is
Zariski dense in $X$. Let $W$ be a connected component of $V \coloneqq U
\cap (X \setminus \Sing(X))$ and let $Y$ be the Zariski closure of $W$
in $X$. Then $W$ is a smooth open subset of $Y(\R)$. Clearly, $f|_W$ is
regular on smooth algebraic arcs. Thus, by Theorem~\ref{th-2-4}, $f|_W$
is a rational function. According to Lemma~\ref{lem-4-2}, $f|_V$ is a
rational function, hence $f$ is also a rational function.
\end{proof}

Theorem~\ref{th-2-4} allows us also to give the following
characterization of rational functions.

\begin{proposition}\label{prop-4-8}
Let $X$ be a real algebraic variety and let $U \subset X(\R)$ be a
connected smooth open subset. For a function $f \colon U \to \R$, the
following conditions are equivalent:
\begin{conditions}
\item\label{prop-4-8-a} $f$ is rational.
\item\label{prop-4-8-b} There exists a Zariski nowhere dense real
subvariety $Y \subset X$ such that for any smooth algebraic arc $A$
contained in $U$ the restriction $f|_{A \setminus Y}$ is a regular
function.
\end{conditions}
\end{proposition}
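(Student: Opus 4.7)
The easy direction is (\ref{prop-4-8-a})~$\Rightarrow$~(\ref{prop-4-8-b}). If $f$ is rational, write $f|_{U \cap Y^0} = R|_{U \cap Y^0}$ for a rational function $R$ on $X$ and a Zariski open dense subset $Y^0 \subset X$ disjoint from $\Pole(R)$, and set $Y \coloneqq \Pole(R) \cup (X \setminus Y^0)$, a Zariski nowhere dense real subvariety of $X$. For any smooth algebraic arc $A \subset U$ one has $A \setminus Y \subset Y^0 \setminus \Pole(R)$, where $f$ coincides with the regular function $R$; hence $f|_{A \setminus Y}$ is regular.

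For the converse (\ref{prop-4-8-b})~$\Rightarrow$~(\ref{prop-4-8-a}) my plan is to apply Theorem~\ref{th-2-4} on the open subset $V \coloneqq U \setminus Y$ of $X(\R)$ and then to glue the resulting local rational representations across $Y$ using Bertini's theorem. First I would verify that $f|_V$ is regular on smooth algebraic arcs in the sense of Definition~\ref{def-2-1}: given $x \in V$ and a smooth algebraic arc $A$ in $X(\R)$ with $x \in A$, the connected component $A_U$ of $A \cap U$ containing $x$ is itself a smooth algebraic arc contained in $U$, so hypothesis (\ref{prop-4-8-b}) makes $f|_{A_U \setminus Y}$ regular; any sufficiently small Euclidean neighborhood $U_x$ of $x$ in $V$ then satisfies $U_x \cap A \subset A_U \setminus Y$, which delivers the required local regularity.

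Let $\{V_i\}$ denote the connected components of $V$. Each $V_i$ is a connected smooth open subset of $X(\R)$, so Theorem~\ref{th-2-4} produces a rational function $R_i$ on $X$ with $f|_{V_i \setminus P_i} = R_i|_{V_i \setminus P_i}$, where $P_i \coloneqq V_i \cap \Pole(R_i)$ has $\codim P_i \geq 2$. Once all the $R_i$ are shown to equal a single rational function $R$, the equality $f = R$ holds on $U \setminus (Y \cup \Pole(R))$, which is the intersection of $U$ with a Zariski open dense subset of $X$; this exhibits $f$ as a rational function.

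The hard part will be identifying the $R_i$, since $V$ may have many connected components and condition (\ref{prop-4-8-b}) gives no direct information at points of $Y$. My plan is to exploit connectedness of $U$: any two components $V_i$, $V_j$ are joined by a continuous path in $U$ which, after a small perturbation, meets $Y$ only at finitely many codimension one smooth points of $Y$. It then suffices to treat two neighboring components $V$, $V'$ whose closures share such a smooth hypersurface point $y_0 \in Y \cap U$. In the spirit of the proof of Lemma~\ref{lem-4-2}, I would fix an embedding $X \subset \PB^N$ and apply Bertini's theorem to the family of linear subspaces through $y_0$ of dimension $N - \dim X + 1$: for a Zariski generic such $L$, the intersection $C_L \coloneqq X \cap L$ is an irreducible real algebraic curve meeting $Y$ transversally at $y_0$, and the connected component $A_L$ of $C_L \cap U$ through $y_0$ is a smooth algebraic arc in $U$ whose two branches at $y_0$ lie in $V$ and $V'$ respectively. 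Since $f|_{A_L \setminus Y}$ is regular, it extends uniquely to a rational function on $C_L$ that must coincide with both $R_V|_{C_L}$ and $R_{V'}|_{C_L}$; hence $R_V - R_{V'}$ vanishes identically on $C_L$. As $L$ varies, the curves $C_L$ sweep out a Zariski dense subset of $X$, forcing $R_V = R_{V'}$ as rational functions on $X$, and chaining along the path finally yields $R_i = R_j$ for all $i, j$.
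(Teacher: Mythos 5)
Your (\ref{prop-4-8-a})~$\Rightarrow$~(\ref{prop-4-8-b}) direction matches the paper. For the converse you take a genuinely different and heavier route: you apply Theorem~\ref{th-2-4} to every connected component of $U\setminus Y$ and then glue the resulting rational functions by perturbed paths and Bertini curves through wall points. The paper does not glue at all: it applies Theorem~\ref{th-2-4} once, to a single nonempty open subset $U_0\subset U\setminus Y$, obtaining one rational function $R$ with $f|_{U_0}=R|_{U_0}$, sets $P\coloneqq Y\cup\Pole(R)$, and then reruns the propagation argument of Lemma~\ref{lem-2-6} on the connected set $U$ itself. That argument only uses arcs $A$ lying in small coordinate neighborhoods $U(p)\subset U$, and for such arcs hypothesis~(\ref{prop-4-8-b}) makes $f|_{A\setminus Y}$ (hence $f|_{A\setminus P}$) regular even when $A$ crosses $Y$; since $f|_{A\setminus P}$ and $R|_{A\setminus P}$ agree on a nonempty open subset of $A$, they agree on $A\setminus P$, and chaining along a path gives $f=R$ on $U\setminus P$. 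So connectedness of $U$ (not of $U\setminus Y$) does all the work, and no Bertini-type input is needed beyond what is already inside Theorem~\ref{th-2-4}.

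Within your route, the step that needs real justification is the constrained Bertini claim: that for a Zariski generic linear space $L$ of dimension $N-\dim X+1$ \emph{through the fixed point} $y_0$, the section $X\cap L$ is an irreducible real curve. This is strictly stronger than the Bertini statement the paper uses (curves through prescribed open sets, as in Lemma~\ref{lem-4-2}), and it is false at singular points: for the quadric cone in $\PB^3$, every plane through the vertex cuts a pair of lines. It does hold at your $y_0$ because $y_0\in U$ is a smooth point (project from $y_0$ and apply Bertini irreducibility iteratively, using that $X$ is not a cone with smooth vertex), but this must be argued, not asserted. Two smaller points: the full connected component $A_L$ of $C_L\cap U$ through $y_0$ need not be a smooth algebraic arc (it may be a compact oval, or meet $\Sing(C_L)$ away from $y_0$), so you should shrink to a small sub-arc at $y_0$, which suffices since regularity of $f$ on that sub-arc minus $Y$ is already given by one rational function on $C_L$ agreeing with both $R_V$ and $R_{V'}$ on infinite sets; and the existence of a path meeting $Y(\R)$ only at finitely many smooth codimension-one points, transversally, is a standard stratification/transversality fact that should at least be invoked explicitly. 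With these repairs your argument goes through, but it is considerably longer than the paper's.
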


\begin{proof}
It is clear that (\ref{prop-4-8-a}) implies (\ref{prop-4-8-b}).

Suppose that (\ref{prop-4-8-b}) holds. By Theorem~\ref{th-2-4}, there
exist a nonempty open subset ${U_0 \subset U \setminus Y}$ and a rational
function $R$ on $X$ such that ${U_0 \subset X \setminus \Pole(R)}$ and
$f|_{U_0} = R|_{U_0}$. Set $P \coloneqq {Y \cup \Pole(R)}$. Proceeding as
in the proof of Lemma~\ref{lem-2-6}, we obtain that ${f|_{U \setminus P}
= R|_{U \setminus P}}$. Thus, (\ref{prop-4-8-b}) imp\-lies~(\ref{prop-4-8-a}).
\end{proof}

\subsection{Functions defined on semialgebraic subsets}

Next we consider functions defined on semialgebraic sets.

\begin{proposition}\label{prop-4-4}
Let $X$ be a real algebraic variety, $W \subset X(\R)$ a semialgebraic
subset, and $f \colon W \to \R$ a function regular on smooth algebraic
arcs. Then there exists a sequence of semialgebraic sets
\begin{equation*}
W = W_0 \supset W_1 \supset \ldots \supset W_m = \emptyset
\end{equation*}
such that, if $Y_i$ is the Zariski closure of $W_i$ in $X$, then for $i =
0, \ldots, m-1$ the following conditions hold:
\begin{nconditions}
\item\label{prop-4-4-1} $W_i = W \cap Y_i$;

\item\label{prop-4-4-2} $W_i \setminus W_{i+1}$ is a smooth open subset
of $Y_i(\R)$;

\item\label{prop-4-4-3} the restriction of $f$ is a regular function on
each connected component of $W_i \setminus W_{i+1}$;

\item\label{prop-4-4-4} $Y_i \setminus Y_{i+1}$ is Zariski dense in
$Y_i$.
\end{nconditions}
If, in addition, $f$ is rational on algebraic curves, then the
restrictions $f|_{W_i \setminus W_{i+1}}$ are regular functions.
\end{proposition}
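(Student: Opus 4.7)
The plan is to build the filtration by induction on $i$ so that $\dim Y_{i+1} < \dim Y_i$ at each step; the process must then terminate with $W_m = \emptyset$ after at most $\dim X + 1$ stages. Given $W_i$ and its Zariski closure $Y_i$, let $U_i$ be the interior of $W_i$ in $Y_i(\R)$, set $B_i \coloneqq W_i \setminus U_i$, and let $U_i' \coloneqq U_i \setminus \Sing(Y_i)$. Because $W_i$ is semialgebraic with $\dim W_i = \dim Y_i$, the set $B_i$ is a semialgebraic subset of strictly smaller dimension, and $U_i'$ is a semialgebraic smooth open subset of $Y_i(\R)$ with only finitely many connected components $V_1, \dots, V_r$. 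Each $V_j$ lies in a unique irreducible component $Y_i^{a(j)}$ of $Y_i$ and is a connected smooth open subset of $Y_i^{a(j)}(\R)$, so Theorem~\ref{th-2-4} applied to $f|_{V_j}$ (which inherits regularity on smooth algebraic arcs) yields a rational function $R_j$ on $Y_i^{a(j)}$ with $f = R_j$ on $V_j \setminus P_j$, where $P_j \coloneqq V_j \cap \Pole(R_j)$ has codimension at least two.

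I would then define the auxiliary subvariety
\[
Z_{i+1} \coloneqq \Sing(Y_i) \,\cup\, \overline{B_i}^{\mathrm{Zar}} \,\cup\, \bigcup_{j=1}^{r} \Pole(R_j) \subset Y_i
\]
and put $W_{i+1} \coloneqq W \cap Z_{i+1}$. Each summand has dimension strictly less than that of every irreducible component of $Y_i$---for the pole sets, $\Pole(R_j)$ is a hypersurface in $Y_i^{a(j)}$ and meets any other component only inside $\Sing(Y_i)$---so $Z_{i+1}$ is a proper subvariety of each irreducible component of $Y_i$; in particular $\dim Z_{i+1} < \dim Y_i$ and $Y_i \setminus Z_{i+1}$ is Zariski dense in $Y_i$.

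Writing $Y_{i+1}$ for the Zariski closure of $W_{i+1}$ in $X$, we have $Y_{i+1} \subset Z_{i+1}$, so the chain $W_{i+1} \subset W \cap Y_{i+1} \subset W \cap Z_{i+1} = W_{i+1}$ gives~(\ref{prop-4-4-1}); condition~(\ref{prop-4-4-4}) is immediate from $Y_i \setminus Y_{i+1} \supset Y_i \setminus Z_{i+1}$. For~(\ref{prop-4-4-2}), $W_i \setminus W_{i+1} = W \cap (Y_i \setminus Z_{i+1})$ is contained in $Y_i \setminus \Sing(Y_i)$, and any point $x$ there lies in $U_i$ (since $B_i \subset Z_{i+1}$), hence admits an open $Y_i(\R)$-neighborhood in $W$; intersecting with the Zariski-open set $Y_i \setminus Z_{i+1}$ produces an open neighborhood inside $W_i \setminus W_{i+1}$. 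For~(\ref{prop-4-4-3}), a connected component $V'$ of $W_i \setminus W_{i+1}$ lies in some $V_j$ and avoids $\Pole(R_j) \subset Z_{i+1}$, so $R_j$ is regular on $V'$ and equals $f$ there; the second formulation of regularity in Definition~\ref{def-1-1} (with $Y_x \coloneqq Y_i^{a(j)} \setminus \Pole(R_j)$) certifies $f|_{V'}$ as a regular function.

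For the additional claim, assume $f$ is rational on algebraic curves. I would invoke Lemma~\ref{lem-4-2}(\ref{lem-4-2-1}) within each irreducible component $Y_i^a$, applied to the union of those $V_j$ with $a(j) = a$; its proof shows that the corresponding rational functions $R_j$ all coincide on $Y_i^a$, so they glue into a single rational function $R$ on $Y_i$ with $\Pole(R) = \bigcup_j \Pole(R_j)$. Since $W_i \setminus W_{i+1}$ avoids $\Pole(R)$ and $f = R$ on a Zariski-dense open subset of every connected component of $W_i \setminus W_{i+1}$ where both functions are regular and hence continuous, the two agree throughout $W_i \setminus W_{i+1}$, so $f|_{W_i \setminus W_{i+1}}$ is the restriction of the regular function $R|_{Y_i \setminus \Pole(R)}$ and is itself regular. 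The main obstacle I anticipate is calibrating $Z_{i+1}$ so that (\ref{prop-4-4-2})--(\ref{prop-4-4-4}) hold simultaneously: it must absorb $\Sing(Y_i)$, the semialgebraic boundary $B_i$, and hypersurface-sized pole sets yet remain a proper subvariety of every irreducible component of $Y_i$. This balancing is made possible by the strict semialgebraic dimension drop for $B_i$ and by the fact that each $\Pole(R_j)$ is a hypersurface of a single component of $Y_i$, meeting any other component only in $\Sing(Y_i)$.
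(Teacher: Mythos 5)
Your proof is correct and follows essentially the same route as the paper's: take the interior of $W_i$ inside the smooth locus of $Y_i(\R)$, apply Theorem~\ref{th-2-4} to its finitely many connected components, absorb the semialgebraic boundary, $\Sing(Y_i)$ and the pole sets into a Zariski nowhere dense subvariety $Z_{i+1}$, iterate with a dimension drop, and obtain the final claim from Lemma~\ref{lem-4-2}. Only the phrase ``each summand has dimension strictly less than that of every irreducible component of $Y_i$'' is loose when the components have different dimensions; what is actually needed, and what your argument does give (componentwise for $\overline{B_i}$ and via your remark on the $\Pole(R_j)$), is that no summand of $Z_{i+1}$ contains an irreducible component of $Y_i$.
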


\begin{proof}
Set $W_0 \coloneqq W$ and let $Y_0$ be the Zariski closure of $W_0$ in
$X$. We now describe how to construct $W_1$. To this end, set $M_0
\coloneqq Y_0(\R) \setminus \Sing(Y_0)$. Denote by $W_0^*$ the interior
of $W_0 \cap M_0$ in $M_0$. Then $W_0^*$ is a semialgebraic subset of
$X(\R)$, and the Zariski closure of $W_0 \setminus W_0^*$ in $X$ is
nowhere dense in $Y_0$ \cite[Chapter~2]{bib4}. Let $S_1$ be the Zariski
closure of $(W_0 \setminus W_0^*) \cup \Sing(Y_0)$ in~$X$. Setting $V_1
\coloneqq W_0 \cap S_1$, we see that the semialgebraic subset $W_0
\setminus V_1 \subset M_0$ is open in $M_0$. Clearly, $f|_{W_0 \setminus
V_1}$ is regular on smooth algebraic arcs. By Theorem~\ref{th-2-4}, the
restriction of $f$ to each connected component of $W_0 \setminus V_1$ is
a rational function. Since $W_0 \setminus V_1$ has finitely many
connected components, we can find a Zariski nowhere dense real
subvariety $Z_1 \subset Y_0$ such that $S_1 \subset Z_1$ and for each
connected component $K$ of $W_0 \setminus V_1$, the restriction $f|_{K
\setminus Z_1}$ is a regular function. Set $W_1 \coloneqq W_0 \cap Z_1$
and let $Y_1$ be the Zariski closure of $W_1$ in $X$. Then $Y_1 \subset
Z_1$, $W_1 = W _0\cap Y_1$, $W_0 \setminus W_1 \subset W_0 \setminus V_1$,
and $W_0 \setminus W_1$ is open in $M_0$. By construction, conditions
(\ref{prop-4-4-1}), (\ref{prop-4-4-2}), (\ref{prop-4-4-3}),
(\ref{prop-4-4-4}) hold for $i=0$.

Since the function $f|_{W_1}$ is regular on smooth algebraic arcs, we
can repeat this process to construct $W_2$, and so on. By
(\ref{prop-4-4-4}), $\dim Y_i > \dim Y_{i+1}$, hence we get $W_m =
\varnothing$ after finitely many steps, which proves (\ref{prop-4-4-1}),
(\ref{prop-4-4-2}), (\ref{prop-4-4-3}), (\ref{prop-4-4-4}) for all $i =
0, \ldots, m-1$.

If $f$ is also rational on algebraic curves, it follows from
(\ref{prop-4-4-2}), (\ref{prop-4-4-3}) and Lemma~\ref{lem-4-2} that the
$f|_{W_i \setminus W_{i+1}}$ are regular functions.
\end{proof}

\begin{theorem}\label{th-4-5}
Let $X$ be a real algebraic variety, ${W \subset X(\R)}$ a semialgebraic
subset, and \linebreak ${f \colon W \to \R}$ a function rational on algebraic curves.
Assume that $f$ is regular on smooth algebraic arcs. Then $f$ is
hereditarily rational.
\end{theorem}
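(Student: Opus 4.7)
The plan is to deduce Theorem~\ref{th-4-5} directly by combining Proposition~\ref{prop-4-4} with the characterization of hereditarily rational functions in Proposition~\ref{prop-4-1}; no further geometric work is needed.

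First, I would observe that the hypotheses of Proposition~\ref{prop-4-4} are satisfied, since $W$ is semialgebraic and $f$ is regular on smooth algebraic arcs. Moreover, the extra assumption that $f$ is rational on algebraic curves is also in force. Consequently, the ``in addition'' clause of Proposition~\ref{prop-4-4} produces a finite decreasing sequence of semialgebraic sets
\begin{equation*}
W = W_0 \supset W_1 \supset \ldots \supset W_m = \emptyset
\end{equation*}
such that, if $Y_i$ denotes the Zariski closure of $W_i$ in $X$, then $W_i = W \cap Y_i$ by (\ref{prop-4-4-1}), $W_i \setminus W_{i+1} \subset Y_i \setminus \Sing(Y_i)$ by (\ref{prop-4-4-2}), $Y_i \setminus Y_{i+1}$ is Zariski dense in $Y_i$ by (\ref{prop-4-4-4}), and, crucially, each restriction $f|_{W_i \setminus W_{i+1}}$ is a genuine regular function (not just regular on connected components).

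Next, I would simply note that this is precisely condition (\ref{prop-4-1-b}) of Proposition~\ref{prop-4-1}. Hence $f$ is hereditarily rational, as required.

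The only ``hard'' content here has already been absorbed into Proposition~\ref{prop-4-4}, whose proof in turn relied on Theorem~\ref{th-2-4} and Lemma~\ref{lem-4-2}; the present statement is just the clean reformulation of that filtration in the language of Definition~\ref{def-1-4}. Thus I do not expect any real obstacle in this step beyond careful bookkeeping to verify that the four numbered conclusions of Proposition~\ref{prop-4-4} precisely match the three conditions required in Proposition~\ref{prop-4-1}(\ref{prop-4-1-b}).
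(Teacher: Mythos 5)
Your proposal is correct and is exactly the paper's argument: the paper's proof of Theorem~\ref{th-4-5} simply says it suffices to combine Propositions~\ref{prop-4-1} and~\ref{prop-4-4}, with the ``in addition'' clause of Proposition~\ref{prop-4-4} supplying the regularity of each $f|_{W_i \setminus W_{i+1}}$ and the resulting filtration matching condition (\ref{prop-4-1-b}) (indeed even the weaker condition (\ref{prop-4-1-c}) would suffice).
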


\begin{proof}
It suffices to combine Propositions~\ref{prop-4-1} and~\ref{prop-4-4}.
\end{proof}

The following is an immediate consequence of Proposition~\ref{prop-4-4}.

\begin{corollary}\label{cor-4-6}
Let $X$ be a real algebraic variety, ${W \subset X(\R)}$ a semialgebraic
subset, and \linebreak ${f \colon W \to \R}$ a function regular on smooth algebraic
arcs. Then $f$ is a semialgebraic function. \qed
\end{corollary}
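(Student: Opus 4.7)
Apply Proposition~\ref{prop-4-4} to obtain a chain of semialgebraic sets
\[
W = W_0 \supset W_1 \supset \ldots \supset W_m = \emptyset
\]
satisfying conditions (\ref{prop-4-4-1})--(\ref{prop-4-4-4}). For each $i = 0,\ldots,m-1$, the set $W_i\setminus W_{i+1}$ is semialgebraic (as a difference of semialgebraic sets) and is a smooth open subset of $Y_i(\R)$, hence has only finitely many connected components, each of which is again semialgebraic. By (\ref{prop-4-4-3}), the restriction of $f$ to each such component is a regular function, and in particular semialgebraic, so its graph is a semialgebraic subset of $X(\R)\times\R$. The graph of $f$ is the finite union of these graphs over all components and all indices $i$, and hence is semialgebraic. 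Therefore $f$ is a semialgebraic function.
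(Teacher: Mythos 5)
Your proof is correct and is essentially the paper's own argument: the paper states Corollary~\ref{cor-4-6} as an immediate consequence of Proposition~\ref{prop-4-4}, exactly via the decomposition and the regularity of $f$ on each connected component of $W_i\setminus W_{i+1}$ (note the finiteness of the components follows from semialgebraicity of $W_i\setminus W_{i+1}$, which is what you in fact use).
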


We now give a characterization of hereditarily rational functions
defined on semialgebraic sets.

\begin{theorem}\label{th-4-7}
Let $X$ be a real algebraic variety and let $W \subset X(\R)$ be a
semialgebraic subset. For a function $f \colon W \to \R$, the following
conditions are equivalent:
\begin{conditions}
\item\label{th-4-7-a} $f$ is hereditarily rational.

\item\label{th-4-7-b} $f$ is rational on algebraic curves and
semialgebraic.
\end{conditions}
\end{theorem}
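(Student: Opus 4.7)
Both implications rest on the characterization of hereditarily rational functions supplied by Proposition~\ref{prop-4-1}. The direction (a) $\Rightarrow$ (b) is straightforward: since every irreducible real algebraic curve is a real subvariety, hereditary rationality immediately implies rationality on algebraic curves, and Proposition~\ref{prop-4-1}(\ref{prop-4-1-c}) furnishes a finite filtration $W = W_0 \supset \dots \supset W_m = \varnothing$ on whose strata $W_i\setminus W_{i+1}$ the function $f$ is regular. Each stratum is semialgebraic (the intersection of $W$ with the difference of the real points of two real algebraic varieties) and regular functions are semialgebraic, so $f$ itself is semialgebraic.

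For the nontrivial direction (b) $\Rightarrow$ (a), the plan is to induct on $d=\dim Y$, where $Y$ is the Zariski closure of $W$ in $X$, producing the filtration required by Proposition~\ref{prop-4-1}(\ref{prop-4-1-c}). The case $d=0$ is immediate since $W$ is then finite and every function on a finite set is regular, hence hereditarily rational. For $d\geq 1$, after reducing to $Y$ irreducible by treating each irreducible component of $Y$ separately, I would apply semialgebraic cell decomposition compatible with $W$ and with the graph of $f$ to obtain a partition of $W$ into finitely many connected Nash cells whose top-dimensional pieces $W_1',\dots,W_r'$ necessarily lie in $Y(\R)\setminus\Sing(Y)$ and on each of which $f$ is a Nash function. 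Then $W'=\bigsqcup W_i'$ is a smooth open subset of $Y(\R)$, and $W\setminus W'$ is a semialgebraic set whose Zariski closure in $X$ has dimension strictly less than $d$.

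The crucial step is to verify that $f|_{W'}$ is regular on smooth algebraic arcs. Given $x\in W_i'$ and a smooth algebraic arc $A\subset C$ through $x$, I may assume $C\subset Y$ (otherwise $A$ meets $Y(\R)$ only at the isolated point $x$ locally, on which $f$ is trivially regular). Rationality of $f|_{W\cap C}$ provides a rational function $R_C$ on $C$ with $f=R_C$ on a Zariski dense open subset of $W\cap C$; because $f|_{W_i'}$ is Nash and hence locally bounded at $x$, the function $R_C$ cannot have a pole at $x$, so $R_C$ is regular on a neighborhood of $x$ in $A$ and $f|_{U_x\cap A}=R_C|_{U_x\cap A}$ is regular for a sufficiently small open $U_x\subset W_i'$. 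Theorem~\ref{th-2-4} applied to each connected $W_i'$ then produces a rational function $R_i$ on $Y$ representing $f$ off a codimension-$2$ subset of $W_i'$, and Lemma~\ref{lem-4-2}(\ref{lem-4-2-3}) amalgamates them into a single rational function $R$ on $Y$ such that $f=R$ on a Zariski dense open subset of $W'$.

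To close the induction, let $Y_1$ be the union of $\Sing(Y)$, $\Pole(R)$, and the Zariski closure in $X$ of $W\setminus W'$, and set $W_1=W\cap Y_1$; then $Y_1\subsetneq Y$, $W_0\setminus W_1\subset Y\setminus\Sing(Y)$, and $f=R$ is regular on $W_0\setminus W_1$. The restriction $f|_{W_1}$ is again semialgebraic and rational on algebraic curves: for any irreducible real algebraic curve $C'\subset X$, the set $W_1\cap C'$ is either Zariski dense in $C'$ (and $f|_{W_1\cap C'}$ inherits the rational representation from $f|_{W\cap C'}$) or finite (and $f|_{W_1\cap C'}$ is regular by polynomial interpolation). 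Since the Zariski closure of $W_1$ has dimension strictly less than $d$, the inductive hypothesis yields a filtration of $W_1$; prepending $W_0\supset W_1$ completes the filtration of $W$, and Proposition~\ref{prop-4-1} delivers~(a). The main obstacle is establishing that $f|_{W'}$ is regular on smooth algebraic arcs, which is where semialgebraicity is used in an essential way: $f$ itself need not be regular on smooth algebraic arcs on all of $W$, as the example $f\colon\R^2\to\R$ with $f(x,y)=1/x$ for $x\neq0$ and $f(0,y)=0$ illustrates (it is semialgebraic and rational on algebraic curves, yet fails to be regular on any smooth algebraic arc crossing the $y$-axis).
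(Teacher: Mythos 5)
Your argument is sound in substance but takes a longer, more hands-on route than the paper. The paper's proof of (\ref{th-4-7-b})~$\Rightarrow$~(\ref{th-4-7-a}) is short: it first notes that it suffices to prove $f$ rational (every restriction $f|_{W \cap Z}$ again satisfies (\ref{th-4-7-b}), so the same argument applies to it); then it invokes the standard fact that a semialgebraic function is continuous on $W \cap Y^0$ for some Zariski open dense $Y^0 \subset Y$, observes that continuity together with rationality on curves yields regularity on smooth algebraic arcs, and concludes by applying Theorem~\ref{th-4-5} to $f|_{W \cap Y^0}$. You instead exploit semialgebraicity through a Nash cell decomposition, verify ``regular on smooth algebraic arcs'' only at points of the top-dimensional cells via the boundedness/no-pole argument, and then run an explicit induction on $\dim Y$ through Theorem~\ref{th-2-4}, Lemma~\ref{lem-4-2} and Proposition~\ref{prop-4-1}(\ref{prop-4-1-c}) --- in effect re-deriving the content of Proposition~\ref{prop-4-4}/Theorem~\ref{th-4-5} in the case at hand instead of citing them. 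What your route buys is an explicit filtration and independence from the generic-continuity theorem; what the paper's route buys is brevity, since that theorem plus Theorem~\ref{th-4-5} already package exactly the stratification and induction you rebuild. Your closing example correctly explains why Theorem~\ref{th-4-5} cannot be applied to $f$ on all of $W$ directly, which is the real point of the argument.

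Two details need small repairs. First, a top-dimensional Nash cell of $W$ need not automatically avoid $\Sing(Y)$: the set $Y(\R)$ can be a smooth manifold at points of $\Sing(Y)$ (see Example~\ref{ex-1-5}), so you must also take the decomposition compatible with $W \cap \Sing(Y)(\R)$ --- a harmless addition, but without it the application of Theorem~\ref{th-2-4} to the cells is not licensed. Second, at the closing step Lemma~\ref{lem-4-2}(\ref{lem-4-2-1}) only gives $f = R$ on $W_i' \cap X_i^0$ for Zariski open dense sets $X_i^0$, whereas you need $f = R$ on all of $W_0 \setminus W_1$; either apply Lemma~\ref{lem-2-6} to each component $W_i'$ (its hypotheses hold and it yields $f = R$ on $W_i' \setminus \Pole(R)$), or enlarge $Y_1$ by $Y \setminus \bigcap_i X_i^0$. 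With these adjustments your induction closes as described.
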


\begin{proof}
If (\ref{th-4-7-a}) holds, then $f$ is rational on algebraic curves.
Furthermore, $f$ is semialgebraic by Proposition~\ref{prop-4-1}. Thus,
(\ref{th-4-7-a}) implies (\ref{th-4-7-b}).

Suppose that (\ref{th-4-7-b}) holds. To prove (\ref{th-4-7-a}), it
suffices to show that $f$ is a rational function. Let~$Y$ be the Zariski
closure of $W$ in $X$. Since $f$ is semialgebraic, there exists a
Zariski open dense subset $Y^0 \subset Y$ such that the restriction
$f|_{W \cap Y^0}$ is continuous \cite{bib4}. The function
$f|_{W \cap Y^0}$ is also rational on algebraic curves. It follows that
$f|_{W \cap Y^0}$ is regular on smooth algebraic arcs. Thus, by
Theorem~\ref{th-4-5}, $f|_{W \cap Y^0}$ is rational, which means that
$f$ is rational as well.
\end{proof}

\section{Arc-rational functions}\label{sec-5}
\subsection{Continuity}
First we address continuity of arc-rational functions.

\begin{proposition}\label{prop-5-1}
Let $X$ be a real algebraic variety and let $f \colon W \to \R$ be an
arc-rational function defined on a subset $W \subset X(\R)$ that is
either open or semialgebraic. Then $f$ is continuous.
\end{proposition}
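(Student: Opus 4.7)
The plan is to argue by contradiction. I would assume $f$ is not continuous at some $x_0 \in W$; use the semialgebraic curve selection lemma to produce a Nash arc through $x_0$ along which this discontinuity is realised; take the Zariski closure of this arc to obtain an irreducible algebraic curve $C \subset X$; and finally invoke arc-rationality at $x_0$ along $C$ to contradict the discontinuity.

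First I would arrange that $f$ is semialgebraic. Since continuity at $x_0$ is a local property, in the open case I can replace $W$ by a semialgebraic Euclidean ball around $x_0$, so in either case of the hypothesis I may assume $W$ is semialgebraic. As every arc-rational function is regular on smooth algebraic arcs, Corollary~\ref{cor-4-6} will force $f$ to be semialgebraic. If $f$ were discontinuous at $x_0$, then for some $\varepsilon_0 > 0$ the semialgebraic set $A \coloneqq \{x \in W : |f(x) - f(x_0)| \geq \varepsilon_0\}$ would contain $x_0$ in its closure, and the semialgebraic curve selection lemma would produce a nonconstant Nash arc $\gamma \colon [0,\delta) \to X(\R)$ with $\gamma(0) = x_0$ and $\gamma((0,\delta)) \subset A$.

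The crux of the argument is to show that, for some $0 < \delta' \leq \delta$, the Zariski closure $C$ of $\gamma((0,\delta'))$ in $X$ is an irreducible algebraic curve containing $x_0$. The dimension-one claim should follow from the algebraic relations over $\R[t]$ satisfied by the coordinate Nash functions of $\gamma$, which confine the image of $\gamma$ to a one-dimensional algebraic subset of $X$. Irreducibility is where I expect the main subtlety: for any algebraic subvariety $Z \subset X$, the preimage $\gamma^{-1}(Z) \cap (0,\delta')$ is the zero set of a real analytic function on an interval, hence either discrete or all of $(0,\delta')$; consequently $\gamma((0,\delta'))$ cannot split between two proper algebraic subvarieties of its Zariski closure. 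Once $C$ is constructed, arc-rationality of $f$ at $x_0 \in C(\R)$ will supply an open neighborhood $U_{x_0} \subset W$ of $x_0$ on which $f|_{U_{x_0}\cap C}$ is continuous rational, and hence continuous at $x_0$. Since $\gamma(t) \in U_{x_0}\cap C$ for small $t > 0$, this forces $f(\gamma(t)) \to f(x_0)$, contradicting $\gamma((0,\delta)) \subset A$. The principal obstacle I anticipate is the dimension-and-irreducibility step for the Zariski closure of the Nash arc; the surrounding steps are a clean assembly of Corollary~\ref{cor-4-6} with the curve selection lemma.
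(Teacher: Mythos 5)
Your proposal is correct and is essentially the paper's own argument: reduce to semialgebraic $W$, invoke Corollary~\ref{cor-4-6} to make $f$ semialgebraic, apply the Nash curve selection lemma, and use arc-rationality along the irreducible curve obtained as the Zariski closure of the selected arc. The only cosmetic difference is that the paper applies curve selection to the closure of the graph in $X(\R)\times\PB^1(\R)$ (showing every limit value equals $f(x_0)$) rather than to your bad set $\{x\in W: |f(x)-f(x_0)|\ge \varepsilon_0\}$, and the dimension-and-irreducibility fact you flag as the main subtlety is simply taken as standard there — your identity-theorem sketch (together with the fact that Zariski closure preserves the dimension of a semialgebraic set) is the right justification.
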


\begin{proof}
Since continuity is a local property, it suffices to consider $W$
semialgebraic. Then $f$ is a semialgebraic function by
Corollary~\ref{cor-4-6}. We now prove continuity of $f$ at an arbitrary
point $x_0 \in W$.

We fix an embedding $\R \subset \PB^1(\R)$ and regard $\Gamma(f)$, the
graph of $f$, as a subset of ${X(\R) \times \PB^1(\R)}$. Let $l \in
\PB^1(\R)$ be any point such that $(x_0, l)$ belongs to the closure of
$\Gamma(f)$. It remains to prove that $f(x_0) = l$. By the Nash curve
selection lemma \cite[Proposition~8.1.13]{bib4}, there exists a Nash arc
$\varphi = (\gamma, \psi) \colon (-1,1) \to X(\R) \times \PB^1(\R)$ with
\begin{equation*}
\varphi(0) = (\gamma(0), \psi(0)) = (x_0, l)\quad \textrm{and}\quad
\varphi((0,1)) \subset \Gamma(f).
\end{equation*}
In particular,
\begin{equation*}
\psi(t) = f(\gamma(t))\quad \textrm{for}\ t \in (0,1).
\end{equation*}
Let $C \subset X$ be the Zariski closure of the semialgebraic set
$\gamma((-1,1))$. Then, either $C = \{x_0\}$ or $C$ is an irreducible
real algebraic curve with $x_0 \in C(\R)$. Since $f$ is arc-rational,
the restriction $f|_{W \cap C}$ is continuous. Consequently, the
function $f \circ \gamma$, which is well defined on $[0, 1)$, is
continuous at $0$, hence
\begin{equation*}
\lim_{t \to 0^+} f(\gamma(t)) = f(x_0).
\end{equation*}
On the other hand,
\begin{equation*}
\lim_{t \to 0} \psi(t) = l.
\end{equation*}
It follows that $f(x_0) = l$, as required.
\end{proof}

\begin{proof}[Proof of Theorem~\ref{th-1-7}]
It is clear that (\ref{th-1-7-a}) implies (\ref{th-1-7-b}).

Suppose that (\ref{th-1-7-b}) holds. Then $f$ is regular on smooth
algebraic arcs. Hence, according to Theorems~\ref{th-4-3}
and~\ref{th-4-5}, $f$ is hereditarily rational. Furthermore, $f$ is
arc-rational, hence continuous by Proposition~\ref{prop-5-1}. Thus
(\ref{th-1-7-a}) holds.
\end{proof}

\begin{proposition}\label{prop-5-2}
Let $X$ be a real algebraic variety, $U \subset X(\R)$ an open subset,
and ${f \colon U \to \R}$ a~continuous rational function.\ Then, for each
irreducible real subvariety ${Z \subset X}$ with \linebreak ${U \cap (X \setminus
\Sing(X)) \cap (Z \setminus \Sing(Z)) \neq \varnothing}$, the restriction
$f|_{U \cap Z}$ is a rational function.
\end{proposition}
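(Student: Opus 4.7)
The plan is to restrict $f$ to the smooth part of $U$, invoke \cite[Proposition~8]{bib11} (continuous rational on a smooth variety is hereditarily rational) to get a rational representation on $V \cap Z$, and then transfer it to $U \cap Z$ by absorbing the singular locus of $X$ into the indeterminacy set.

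I would first set $V := U \cap (X \setminus \Sing(X))$. The hypothesis $x_0 \in U \cap (X \setminus \Sing(X)) \cap (Z \setminus \Sing(Z))$ makes $V$ a nonempty Euclidean open subset of the smooth variety $X \setminus \Sing(X)$, on which $f$ is still continuous rational. Moreover, $V \cap Z$ contains a Euclidean open neighborhood of $x_0$ inside the real manifold $Z(\R) \setminus \Sing(Z)$ of (real) dimension $\dim Z$; since $Z$ is irreducible, this forces $V \cap Z$, and a fortiori $U \cap Z$, to be Zariski dense in $Z$. In particular, the Zariski closure of $U \cap Z$ in $X$ is exactly $Z$.

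Next, \cite[Proposition~8]{bib11} (applied on a smooth affine Zariski open piece containing $V$) shows that $f|_V$ is hereditarily rational, so $f|_{V \cap Z}$ is rational. Combined with the density statement above, this yields a rational function $R_Z$ on $Z$ with $f = R_Z$ on $V \cap Z^0$, where $Z^0 := Z \setminus \Pole(R_Z)$ is Zariski open and dense in $Z$.

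Finally, I would take $Y^0 := Z^0 \setminus \Sing(X)$; this is still Zariski open and dense in $Z$, because $\Pole(R_Z) \subsetneq Z$ (as $R_Z$ is genuinely rational) and $Z \cap \Sing(X) \subsetneq Z$ (as $x_0 \in Z \setminus \Sing(X)$). A direct set-theoretic computation gives
\[
U \cap Z \cap Y^0 \;=\; U \cap (X \setminus \Sing(X)) \cap Z^0 \;=\; V \cap Z^0,
\]
so $f = R_Z|_{Y^0}$ on $U \cap Z \cap Y^0$, which is precisely what Definition~\ref{def-1-1} requires for $f|_{U \cap Z}$ to be rational. The only delicate point is the density claim, which rests on the standard fact that a nonempty Euclidean open subset of the smooth real locus of an irreducible real variety is Zariski dense in that variety; the rest is bookkeeping with the definitions.
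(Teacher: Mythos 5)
Your reductions (passing to $V := U \cap (X\setminus\Sing(X))$, the Zariski density of $V\cap Z$ in $Z$ coming from a Euclidean neighborhood of $x_0$ in the smooth real locus of $Z$, and the final bookkeeping with $Y^0 = Z^0\setminus\Sing(X)$) are all correct, and they do reduce the proposition to the claim that $f|_V$ restricts rationally to $Z$. The gap is in the one step that carries all the weight: you invoke \cite[Proposition~8]{bib11} to conclude that $f|_V$ is hereditarily rational, but that proposition is stated for a continuous rational function defined on the \emph{whole} set of real points $X'(\R)$ of a smooth variety $X'$, whereas your $f|_V$ is defined only on a Euclidean open subset $V$ of the smooth locus, with no reason to extend continuously (or at all) to the rest of $X'(\R)$. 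Passing to ``a smooth affine Zariski open piece containing $V$'' does not repair this, since $V$ remains a proper Euclidean open subset of that piece's real points. In other words, the statement you want to quote is exactly the open-subset (local) analogue of Koll\'ar--Nowak's result, which is the content of Proposition~\ref{prop-5-2} itself; the paper accordingly does not cite \cite[Proposition~8]{bib11} as a black box but says that its \emph{proof} (an argument with blow-ups and limits along the subvariety) must be repeated with minor modifications in this local setting. Note also that you cannot instead appeal to Theorem~\ref{th-1-10}, since its proof uses Proposition~\ref{prop-5-2} and this would be circular.

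So the missing piece is precisely the hereditary-rationality mechanism for functions defined only on an open set: given the rational function $R$ on $X$ representing $f$ off a nowhere dense subvariety, one must explain why its ``restriction'' to $Z$ (which may lie inside $\Pole(R)$ or inside the exceptional set where $f\neq R$) can be replaced, using the continuity of $f$ on $U$ and a resolution/blow-up argument along $Z$ near the smooth point $x_0$, by a genuine rational function on $Z$ agreeing with $f$ on a Zariski dense open subset of $Z$. Without reproducing that argument (or an equivalent one), the proposal does not prove the proposition.
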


\begin{proof}
One can repeat the proof of \cite[Proposition~8]{bib11} with only minor
modifications.
\end{proof}

\begin{proof}[Proof of Theorem~1.10]
It is clear that (\ref{th-1-10-a}) implies (\ref{th-1-10-b}).

Suppose that (\ref{th-1-10-b}) holds. According to Theorem~\ref{th-2-4}
and Proposition~\ref{prop-5-1}, $f$ is continuous rational. Let $Z
\subset X$ be a real subvariety with $U \cap Z \neq \varnothing$ and let
$Y$ be an irreducible component of the Zariski closure of $U \cap Z$ in
$X$. Then $U \cap (Y \setminus \Sing(Y)) \neq \varnothing$, hence $f|_{U
\cap Y}$ is a rational func\-tion by Proposition~\ref{prop-5-2}.
Consequently, $f|_{U \cap Z}$ is a rational function. Thus,
(\ref{th-1-10-b}) implies~(\ref{th-1-10-a}).
\end{proof}
\begin{proof}[Proof of Theorem~1.11]
It suffices to combine Propositions \ref{prop-4-4} and \ref{prop-5-1}
\end{proof}

\subsection{Arc-analyticity}

We now prepare to deal with arc-analyticity of arc-rational functions.

\begin{lemma}\label{lem-5-3}
Let $X$ be a real algebraic variety and let $f \colon W \to \R$ be an
arc-rational function defined on a subset $W \subset X(\R)$. For some
$\varepsilon > 0$, let $\gamma \colon (-\varepsilon, \varepsilon) \to
X(\R)$ be a Nash arc with $\gamma((-\varepsilon, \varepsilon)) \subset
W$. Then $f \circ \gamma$ is an analytic function.
\end{lemma}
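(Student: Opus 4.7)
The plan is to prove analyticity of $f \circ \gamma$ at each $t_0 \in (-\varepsilon, \varepsilon)$ separately, since analyticity is a local property. Set $x_0 \coloneqq \gamma(t_0)$. First I would identify an appropriate irreducible real algebraic curve through $x_0$: by Noetherianity, the Zariski closures of $\gamma((t_0-\delta, t_0+\delta))$ in $X$ stabilize as $\delta \to 0^+$ to a subvariety $C \subset X$. If $\dim C = 0$ then $\gamma$ is constant near $t_0$ and there is nothing to prove, so assume $\dim C = 1$. Irreducibility of $C$ is the point that needs checking, and would be handled via complexification: a connected Nash arc extends complex-analytically to a connected neighborhood of $t_0$, whose image lies in a single irreducible component of the complexification of $C$; it follows that $C$ itself is irreducible after possibly shrinking $\delta$.

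Having produced such $C$, Definition~\ref{def-1-9} furnishes an open neighborhood $U_{x_0} \subset W$ of $x_0$ on which $g \coloneqq f|_{U_{x_0} \cap C}$ is continuous rational. Shrinking $\delta$ so that $\gamma((t_0-\delta, t_0+\delta)) \subset U_{x_0}$, I would represent the rational function on $C$ underlying $g$ as a quotient $P/Q$ of regular functions on $C$ (for instance, restrictions of polynomials in the ambient affine coordinates). Both $P \circ \gamma$ and $Q \circ \gamma$ are then Nash, hence real-analytic, on the whole of $(t_0-\delta, t_0+\delta)$. Crucially, $Q \circ \gamma \not\equiv 0$: by construction $C$ is the Zariski closure of the image of $\gamma$, so any polynomial not identically zero on $C$ restricts to a nonzero function on that image.

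Let $S \subset (t_0-\delta, t_0+\delta)$ be the (necessarily finite) zero set of $Q \circ \gamma$. Off $S$, the identity $f \circ \gamma = (P\circ\gamma)/(Q\circ\gamma)$ exhibits $f \circ \gamma$ as real-analytic. At any $t_1 \in S$, the function $f \circ \gamma$ is continuous (because $g$ is continuous and $\gamma$ is continuous), hence locally bounded near $t_1$; this forces the analytic function $P \circ \gamma$ to vanish at $t_1$ to order at least that of $Q \circ \gamma$. Writing $P \circ \gamma = (t-t_1)^k p_1(t)$ and $Q \circ \gamma = (t-t_1)^k q_1(t)$ with $q_1(t_1) \neq 0$, the analytic ratio $p_1/q_1$ agrees with $f \circ \gamma$ in a punctured neighborhood of $t_1$, and by continuity throughout that neighborhood. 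Hence $f \circ \gamma$ is analytic at $t_1$ as well.

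The main obstacle is the first step: justifying that the Zariski-closure germ of the Nash arc at $t_0$ is an irreducible real algebraic curve, so that Definition~\ref{def-1-9} can be applied at all. The subsequent representation $R = P/Q$ and the cancellation of common zeros are essentially bookkeeping, producing by the standard meromorphic-plus-continuity mechanism the conclusion that $f \circ \gamma$ is analytic throughout $(-\varepsilon, \varepsilon)$.
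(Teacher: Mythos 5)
Your proof is correct and takes essentially the same route as the paper: pass to the Zariski closure of the Nash arc's image (an irreducible curve, or a point in the constant case), apply the arc-rationality hypothesis on a neighborhood of $\gamma(t_0)$ to identify $f\circ\gamma$ locally with a continuous function that agrees off a discrete set with a quotient of analytic functions, and conclude analyticity. The only cosmetic differences are that the paper works with the global Zariski closure of $\gamma((-\varepsilon,\varepsilon))$ (irreducible by the same analytic-continuation reasoning you sketch for the local closures, which in fact coincide with it) and compresses your $P/Q$ order-of-vanishing cancellation into the one-line fact that a continuous real meromorphic function on an interval is analytic.
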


\begin{proof}
It is harmless to assume that $\gamma$ is not a constant map. Let $C$ be
the Zariski closure in $X$ of the semialgebraic set
$\gamma((-\varepsilon, \varepsilon))$. Then $C$ is an irreducible real
algebraic curve.

Fixing $t_0 \in (-\varepsilon, \varepsilon)$ and setting $x_0 \coloneqq
\gamma(t_0)$, we can find an open neighborhood $U(x_0) \subset W$ of
$x_0$ such that the restriction $f|_{U(x_0) \cap C}$ is a continuous
rational function. Then $I(t_0) \coloneqq \gamma^{-1}(U(x_0) \cap C)$ is
an open neighborhood of $t_0$ in $(-\varepsilon, \varepsilon)$, and $(f
\circ \gamma)|_{I(t_0)}$ is a continuous meromorphic function. Any
continuous real meromorphic function on an open interval is analytic. It
follows that $f \circ \gamma$ is analytic on $(-\varepsilon,
\varepsilon)$.
\end{proof}

\begin{proof}[Proof of Theorem~\ref{th-1-11}]
In view of  Proposition~\ref{prop-5-1}, it remains to prove that $f$ is
arc-analytic. Since each point in
$W$ has a semialgebraic open neighborhood, we may assume without loss of
generality that $W$ is open and semialgebraic. Then, by
Corollary~\ref{cor-4-6} and Proposition~\ref{prop-5-1}, $f$ is a
continuous semialgebraic function.

It is convenient to assume for the rest of the proof that $X \subset
\AB^N$, hence $X(\R) \subset \R^N$. This is justified since $X$ can be
replaced with an affine open subset containing $X(\R)$. As a consequence
of the \L{}ojasiewicz inequality \cite[Theorem~2.6.6]{bib4}, we obtain
that the function $f$ is locally H\"older. More precisely, for any point
$x \in W$ we can find an open neighborhood $U \subset W$ and two
constants $\rho > 0$, $C > 0$ such that
\begin{equation}\label{eq-1-11-1}
|f(y) - f(y')| \leq C|y - y'|^{\rho}\quad \textrm{for all}\ y, y' \in U.
\end{equation}

In order to complete the proof, it suffices to show that for each
analytic arc ${\eta \colon (-1, 1) \to W}$, the function $f \circ \eta$ is
analytic at $0 \in (-1, 1)$. Since $f \circ \eta$ is continuous and
 subanalytic (in fact it is semianalytic by a  result of {\L}ojasiewicz, cf. \cite{bib21}), it has near $0$ two (possibly distinct) expansions as
convergent Puiseux series. This means that for some integer $r > 0$,
\begin{equation*}
f(\eta(t)) = \sum_{i=1}^{\infty} a_i t^{i/r}\quad \textrm{for}\ 0 \leq t \ll
1\quad \textrm{and}\quad f(\eta(t)) = \sum_{i=0}^{\infty} b_i
(-t)^{i/r}\quad \textrm{for}\ -1 \ll t \leq 0,
\end{equation*}
where the Puiseux series are convergent \cite[Corollary~2.7]{bib21}.
Thus, $f \circ \eta$ is analytic at $t = 0$ if and only if
\begin{equation}\label{eq-1-11-2}
a_i = 0 = b_i\quad \textrm{for}\ i \in (\N \setminus r\N)\quad
\textrm{and}\quad a_i = (-1)^i b_i\quad \textrm{for}\ i \in r\N.
\end{equation}
Suppose that $f \circ \eta$ is not analytic at $t=0$, hence at least one
of the conditions in \eqref{eq-1-11-2} is violated. It follows that
there exists an integer $k > 0$ with the following properties: for every
$\varepsilon > 0$ and every analytic function $h \colon (-\varepsilon,
\varepsilon) \to \R$ we can find a constant $c_h > 0$ such that
\begin{equation}\label{eq-1-11-3}
\begin{aligned}
\textrm{either}\ |f(\eta(t)) - h(t)| &> c_h t^k\ &&\textrm{for}\ 0 < t \ll
\varepsilon\\
\textrm{or}\ |f(\eta(t)) - h(t)| &> c_h |t|^k\ &&\textrm{for}\
-\varepsilon \ll t < 0.
\end{aligned}
\end{equation}
However, this leads to a contradiction. Indeed, by
\cite[Lemma~2.9]{bib22}, or alternatively, by Artin's approximation theorem \cite[Theorem~8.3.1]{bib4}, for every integer $s > 0$ there exist constants
$\varepsilon > 0$, $c > 0$ and a Nash arc $\gamma \colon (-\varepsilon,
\varepsilon) \to W$ satisfying
\begin{equation}\label{eq-1-11-4}
|\eta(t) - \gamma(t)| \leq c|t|^s\quad \textrm{for}\ |t| \ll \varepsilon.
\end{equation}
Choose $s$ and $\gamma$ so that \eqref{eq-1-11-4} holds and $s \rho >
k$. In view of \eqref{eq-1-11-1}, with $x = \eta(0) = \gamma(0)$, we get
\begin{equation*}
|f(\eta(t)) - f(\gamma(t))| \leq C|\eta(t) - \gamma(t)|^{\rho} \leq cC
|t|^{s\rho}\quad \textrm{for}\ |t| \ll \varepsilon,
\end{equation*}
which contradicts \eqref{eq-1-11-3} since $f \circ \gamma$ is an analytic
function by Lemma~\ref{lem-5-3}. The proof is complete.
\end{proof}


\section{Rational functions on products}\label{sec-7}
\subsection{The main result}
Let $X = X_1 \times \cdots \times X_n$ be the product of real
algebraic varieties $X_1, \ldots, X_n$ and let $\pi_i \colon X \to
X_i$ be the projection on the $i$th factor. We say that a subset
$K \subset X(\R) = X_1(\R) \times \cdots X_n(\R)$ is
\emph{parallel to the $i$th factor of $X$} if $\pi_j(K)$ consists
of one point for each $j \neq i$.

The following is the main result of this section.

\begin{theorem}\label{th-7-1}
Let $X = X_1 \times \cdots \times X_n$ be the product of real
algebraic varieties and let $f \colon U \to \R$ be a function
defined on a connected smooth open subset $U \subset X(\R)$.
Assume that the restriction of $f$ is regular on each smooth
algebraic arc contained in $U$ and parallel to one of the factors
of~$X$. Then there exists a rational function $R$ on $X$ such
that $P \coloneqq U \cap \Pole(R)$ has codimension at least~$2$
and $f|_{U \setminus P} = R|_{U \setminus P}$.
\end{theorem}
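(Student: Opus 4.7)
The proof follows the three-step pattern of Theorem~\ref{th-2-4}: (i)~produce a nonempty open set on which $f$ is Nash, (ii)~upgrade this to an equality $f = R$ on an open subset of $U$ for some rational function $R$ on $X$, and (iii)~extend this equality to $U$ off a codimension-$2$ subset. Throughout one may assume each $X_i$ is irreducible.

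For step (i), one reduces to the case that $U$ is contained in a product of Nash charts: choose Zariski open $X_i^0 \subset X_i$, real morphisms $\varphi_i \colon X_i^0 \to \AB^{d_i}$ (with $d_i = \dim X_i$), semialgebraic open $V_i \subset \R^{d_i}$, and Nash diffeomorphisms $\psi_i \colon U_i \to V_i$ with $U \subset U_1 \times \cdots \times U_n$, so that $\psi \coloneqq \psi_1 \times \cdots \times \psi_n$ is a Nash isomorphism of $U_1 \times \cdots \times U_n$ onto $V \coloneqq V_1 \times \cdots \times V_n$. An open interval in $V$ parallel to a coordinate axis pulls back under $\psi$ to a smooth algebraic arc in $U$ parallel to one of the factors $X_i$; by hypothesis $f$ is regular on such an arc, so $f \circ \psi^{-1}$ is a Nash function on every axis-parallel interval in $V$. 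Corollary~\ref{cor-3-3} then supplies a nonempty semialgebraic open $V_0 \subset V$ on which $f \circ \psi^{-1}$ is Nash; consequently $f$ is Nash (in particular semialgebraic) on $U_0 \coloneqq \psi^{-1}(V_0)$.

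For step (ii), one adapts Proposition~\ref{prop-2-5} to the product setting. The Zariski closure of the graph of $f|_{U_0}$ in $X \times \AB^1$ is an irreducible hypersurface $Z$; let $m$ denote the generic degree of $\pi_1 \colon Z \to X$. It suffices to prove $m = 1$. Assume $m > 1$; the plan is to construct an irreducible real algebraic curve $C \subset X$ which (a) is parallel to one factor $X_i$, (b) meets $U_0$ in a set containing a smooth algebraic arc $A$, and (c) has an irreducible preimage $Z \cap (C \times \AB^1)$ of degree $m$ over $C$. Granting this, the hypothesis forces $f|_A$ to be regular (since $A$ is parallel to $X_i$), yet its graph must coincide with the irreducible degree-$m$ curve $Z \cap (C \times \AB^1)$, which is multi-valued over $C$ -- a contradiction exactly as in the proof of Proposition~\ref{prop-2-5}. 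To build $C$, fix an embedding $X_i \subset \PB^{N_i}$ and apply Bertini's theorem \cite[Theorem~3.3.1]{bib24} to the family of factor-parallel curves in $X$ of direction $X_i$, parametrized by pairs $(q, L)$ with $q \in \prod_{j \neq i} X_j$ and $L \in \Gr(k, N_i)$ (where $k$ is chosen so that a generic $X_i \cap L$ is a curve). A generic such pair yields a curve $C = \{q\} \times (X_i \cap L)$ for which $X_i \cap L$ is irreducible, the preimage $Z \cap (C \times \AB^1)$ is irreducible of degree $m$ over $C$, and $C$ meets $U_0$ in a set containing a smooth algebraic arc $A$.

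The birational component of $Z \to X$ then defines a rational function $R$ on $X$ with $f|_{U_0} = R|_{U_0}$ and $U_0 \subset X \setminus \Pole(R)$. For step (iii), one extends $f = R$ to all of $U \setminus P$ (with $P \coloneqq U \cap \Pole(R)$ of codimension at least $2$) by the argument of Lemma~\ref{lem-2-6}, substituting factor-parallel zigzag paths for the generic smooth algebraic arcs used there: inside any sufficiently small product neighborhood $U_1(p_1) \times \cdots \times U_n(p_n) \subset U$, any two points can be joined by successively varying one coordinate at a time along smooth algebraic arcs in the respective $X_i$, and $f$ is regular on each such arc by hypothesis. The codimension-$2$ claim is verified by transversally intersecting $P$ with a factor-parallel arc, as at the end of the proof of Lemma~\ref{lem-2-6}. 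The main obstacle is the family-Bertini construction in step~(ii): ensuring that the generic factor-parallel curve $C$ in the direction $X_i$ has both an irreducible preimage under $\pi_1$ and the full generic degree $m$, which requires care with the joint genericity conditions on the pair $(q, L)$. Steps~(i) and~(iii) are largely formal once one observes that axis-parallel intervals in a product Nash chart pull back to factor-parallel algebraic arcs, and that product neighborhoods admit factor-parallel zigzag connections.
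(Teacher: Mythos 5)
Your steps (i) and (iii) are sound and essentially match ingredients of the paper (step (i) is the chart argument of Lemma~\ref{lem-3-4} adapted to product charts, step (iii) is exactly the factor-parallel zigzag of Lemma~\ref{lem-7-5}), but step (ii) — the heart of the matter — has a genuine gap. You want Bertini's theorem to produce a factor-parallel curve $C=\{q\}\times(X_i\cap L)$ whose preimage $Z\cap(C\times\AB^1)$ is irreducible of the full degree $m$ over $C$. Bertini does not apply to this family: every member is contained in a fibre $\{q\}\times X_i$ of the projection $X\to\prod_{j\neq i}X_j$, and the preimage of a generic fibre can be reducible, splitting into components of degree $<m$ (indeed into degree-one graph components), even though $Z$ is irreducible of degree $m$ over $X$. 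In that situation the graph of $f$ restricted to $\{q\}\times X_i$ lies on a single-valued component, $f$ restricted to every curve in the fibre can be regular, and no contradiction with $m>1$ arises — for any choice of $(q,L)$ and any direction $i$. In fact, the hypothesis only tells you (via Theorem~\ref{th-2-4} applied inside each fibre, where genuine Bertini is available) that each slice function $y\mapsto f(q,y)$ is rational; ruling out that an irreducible $Z$ of degree $m>1$ splits in this way over the slices in \emph{every} direction, compatibly with a single function $f$, is precisely the ``separately rational implies rational'' problem, and it is not a genericity statement. Example~\ref{ex-7-4} in the paper is included exactly to warn that restrictions of a non-rational function to all members of large families of curves can be rational, i.e.\ that the cover can split over every member of a constrained family; your ``joint genericity of $(q,L)$'', which you yourself flag as the main obstacle, is the entire difficulty and is left unproved.

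The paper closes this gap by a completely different mechanism: induction on the number of factors, so that for each fixed $x_n$ the function of the remaining variables is rational; then Lemma~\ref{lem-7-6} (propagation of identical vanishing along factor-parallel arcs) to normalize the degenerate cases; and then Lemmas~\ref{lem-7-7} and~\ref{lem-7-8}, a refinement of the Bochner--Martin theorem on separately rational functions, which expresses the slice-wise rationality as linear relations $\sum_i\varphi_i(y)f_i(x,y)=0$ with coefficients depending on $y$, uses a Baire category argument to make the shape of these relations uniform on a dense subset of some open set, and converts the coefficients into regular functions via a determinant/minor argument. Only after that does Lemma~\ref{lem-7-5} (your step (iii)) take over. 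If you want to keep your outline, you would have to replace the Bertini claim in step (ii) by an argument of this Bochner--Martin type; as written, step (ii) does not follow from the tools you invoke.
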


Theorem~\ref{th-7-1}, for $n=1$, coincides with
Theorem~\ref{th-2-4}. We prove the general case by induction on
$n$, but this requires some preparation.
First, however, we give the following immediate consequence of
Theorem~\ref{th-7-1}.

\begin{corollary}\label{cor-7-2}
Let $f \colon U \to \R$ be a function defined on a connected open
subset $U \subset \R^n$. Assume that the restriction of $f$ is a
regular function on each open interval contained in $U$ and
parallel to one of the coordinate axes. Then there exists a
rational function $R$ on $\mathbb A^n$ such that $P \coloneqq U \cap
\Pole(R)$ has codimension at least $2$ and $f|_{U \setminus P} =
R|_{U \setminus P}$. \qed
\end{corollary}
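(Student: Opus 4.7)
The plan is to deduce the corollary as a direct specialization of Theorem~\ref{th-7-1}. I would write $\mathbb{A}^n = \mathbb{A}^1 \times \cdots \times \mathbb{A}^1$ (with $n$ factors), set $X_i = \mathbb{A}^1$ for $i = 1, \ldots, n$, and take $X = \mathbb{A}^n$. Then $X(\mathbb{R}) = \mathbb{R}^n$ is smooth, so $U$ is automatically a connected smooth open subset of $X(\mathbb{R})$. The projections $\pi_i \colon \mathbb{A}^n \to \mathbb{A}^1$ are the coordinate projections.

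The main point to verify is that the hypothesis of Theorem~\ref{th-7-1} matches the hypothesis of the corollary, namely that ``smooth algebraic arcs in $\mathbb{R}^n$ parallel to the $i$th factor'' coincide with ``open intervals contained in $U$ parallel to the $i$th coordinate axis.'' For this, suppose $A \subset U$ is a smooth algebraic arc parallel to the $i$th factor in the sense of the definition preceding Theorem~\ref{th-7-1}. Then $\pi_j(A)$ is a single point for every $j \neq i$, so $A$ lies in an affine line $L$ parallel to the $i$th coordinate axis. The Zariski closure in $\mathbb{A}^n$ of any subset of $L$ is contained in $L$, and $L$ is an irreducible nonsingular algebraic curve isomorphic to $\mathbb{A}^1$. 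Hence $A$ is a noncompact connected smooth open subset of $L(\mathbb{R}) \cong \mathbb{R}$, i.e. an open interval. Conversely, any open interval in $U$ parallel to a coordinate axis is obviously such a smooth algebraic arc. Thus the assumption that $f$ is regular on every open interval in $U$ parallel to a coordinate axis is precisely the assumption that $f$ is regular on every smooth algebraic arc in $U$ parallel to a factor of $X = \mathbb{A}^n$.

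With this correspondence in hand, I would apply Theorem~\ref{th-7-1} verbatim to produce a rational function $R$ on $X = \mathbb{A}^n$ such that $P := U \cap \Pole(R)$ has codimension at least $2$ and $f|_{U \setminus P} = R|_{U \setminus P}$, which is exactly the conclusion of the corollary. There is no genuine obstacle here; the only thing to be careful about is the identification of smooth algebraic arcs parallel to the $i$th factor with open intervals parallel to the $i$th coordinate axis, and that identification is immediate once one unpacks the definitions.
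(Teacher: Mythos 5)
Your proposal is correct and is exactly the paper's intended argument: the paper states Corollary~\ref{cor-7-2} as an immediate specialization of Theorem~\ref{th-7-1} to $X=\AB^1\times\cdots\times\AB^1$, with the same identification of smooth algebraic arcs parallel to a factor with open intervals parallel to a coordinate axis. Your verification of that identification (the Zariski closure of such an arc is the line containing it, which is an irreducible nonsingular curve) is the only point needing care, and you handle it correctly.
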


There is a related result in \cite{bib8}, which however is of a somewhat
different nature. We recall it below.

Let $f \colon U \to \R$ be an analytic function defined on a
connected open subset $U \subset \R^n$. Assume that the
restriction of $f$ is rational on each open interval $I$
contained in $U$ and parallel to one of the coordinate axes.
According to \cite[p.~201, Theorem~5]{bib8}, $f$ is then a
rational function. Since $f$ is assumed to be analytic, the
restriction $f|_I$ is regular. Furthermore, it easily follows
that $f$ is regular on $U$ (see the beginning of  Section \ref{sec-8}).


In Corollary~\ref{cor-7-2}, the function $f$ need not be regular.
In fact, Example~\ref{ex-2-3} shows that much stronger
conditions do not imply even local boundedness of $f$.

Furthermore, the function $f$ in Corollary~\ref{cor-7-2} need not
be regular on smooth algebraic arcs.

\begin{example}\label{ex-7-3}
The function $f \colon \R^2 \to \R$, defined by
\begin{equation*}
f(x,y) = \frac{xy}{x^4 + y^4}\quad \textrm{for}\ (x,y) \neq (0,0)
\quad \textrm{and}\quad f(0,0)=0,
\end{equation*}
is regular on any line parallel to one of the coordinate axes but
it is not continuous on $y=x$.
\end{example}

Corollary~\ref{cor-7-2} could suggest that on an algebraic
surface any grid formed by $2$ pencils of algebraic curves may be
enough to check rationality. The next example shows that this is
not at all the case.

\begin{example}\label{ex-7-4}
On $\R^2$ consider the function $f(x,y) = \sqrt{x^2 + y^2 +1}$.
It is not rational on any open set.

For $a>0$ let $C_a$ be the hyperbola with the equation $ax^2 -
y^2 = 1$. Note that
\begin{equation*}
(x^2 + y^2 +1)|_{C_a} = (1+a)x^2|_{C_a},
\end{equation*}
hence $f|_{C_a} = \sqrt{1+a}|x|$ is a rational function on both
connected components of $C_a(\R)$. These hyperbolas form a
pencil, a unique one passing thought every point not  on the $y$-axis.

Note further that $f$ is invariant under rotations. We
can thus rotate the hyperbolas to get infinitely many pencils of
curves, together forming a $2$-parameter family ${\{C_{a, \theta}
\colon a>0,\ 0\leq \theta \leq 2\pi\}}$, such that the restriction
of $f$ to any one  of these curves is rational (even regular) on both
connected components of $C_{a, \theta}(\R)$. Through any given
point there is now a $1$-parameter family of rotated hyperbolas
$\{C_{\lambda}\}$ such that the $f|_{C_{\lambda}}$ are rational.

More generally, if $B \subset \mathbb A^2$ is a curve of degree~$d$
that is tangent to the conic $(x^2 + y^2 + 1 =0)$ at $d$~points
then $(x^2 + y^2 +1)|_B$ is a square (over~$\CB$) hence
$f|_{B(\R)}$ is the absolute value of a rational function. The
family of such curves has dimension
\begin{equation*}
\binom{d+2}{2} - d -1.
\end{equation*}
Thus we get larger and larger  families of curves on which $f$ has rational
restriction.
\end{example}

\subsection{Initial steps}

Our first step towards the proof of Theorem~\ref{th-7-1} is  the
following variant of Lemma~\ref{lem-2-6}.

\begin{lemma}\label{lem-7-5}
Let $X = X_1 \times \cdots \times X_n$ be the product of real
algebraic varieties, $U \subset X(\R)$ a connected smooth open
subset, and $f \colon U \to \R$ a function whose restriction is
regular on each smooth algebraic arc contained in $U$ and
parallel to one of the factors of $X$. Assume that there exist a
nonempty open subset $U_0 \subset U$ and a rational function $R$
on $X$ such that $U_0 \subset U \setminus \Pole(R)$ and $f|_{U_0}
= R|_{U_0}$. Then $P \coloneqq U \cap \Pole(R)$ has codimension
at least $2$ and $f|_{U \setminus P} = R|_{U \setminus P}$.
\end{lemma}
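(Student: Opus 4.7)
The plan is to follow the strategy of Lemma~\ref{lem-2-6}, replacing the single-arc connection of two points by a coordinate-by-coordinate sweep in a product-adapted Nash chart. As in Lemma~\ref{lem-2-6} we may assume $X$ is irreducible and $d := \dim X \geq 2$; since $\Sing(X_1 \times \cdots \times X_n) = \bigcup_i X_1 \times \cdots \times \Sing(X_i) \times \cdots \times X_n$, smoothness of $U$ ensures that each factor $X_i$ is smooth at $p_i$ for every $p = (p_1, \ldots, p_n) \in U$. Applying the construction from Lemma~\ref{lem-2-6} to each factor separately, we obtain, after shrinking around $p$, real morphisms $\varphi_i \colon X_i(p_i) \to \AB^{d_i}$ whose restrictions $\psi_i \colon V_i(p_i) \to (-1, 1)^{d_i}$ are real analytic diffeomorphisms with $\psi_i(p_i) = 0$. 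Setting $U(p) := V_1(p_1) \times \cdots \times V_n(p_n) \subset U$, the product map $\psi = \psi_1 \times \cdots \times \psi_n$ identifies $U(p)$ with $(-1, 1)^d$, and for any open interval parallel to a coordinate axis lying in the $i$th block of variables, its preimage under $\psi$ is a smooth algebraic arc in $X(\R)$ parallel to the $i$th factor.

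The key step is the following propagation claim: \emph{if $V \subset U(p)$ is a nonempty open subset on which $f = R$, then $f = R$ on all of $U(p)$ and $U(p) \cap P = \varnothing$.} Enumerate the coordinates on $(-1,1)^d$ as $t_1, \ldots, t_d$ and set $V^{(0)} := V$. Inductively, given an open set $V^{(k-1)} \subset U(p)$ on which $f = R$, define $V^{(k)}$ to be the saturation of $V^{(k-1)}$ along the $t_k$-direction, i.e., the set of $(s_1, \ldots, s_d) \in (-1,1)^d$ such that $(s_1, \ldots, s_{k-1}, s', s_{k+1}, \ldots, s_d) \in V^{(k-1)}$ for some $s'$. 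Each $t_k$-parallel line in $(-1,1)^d$ either misses $V^{(k-1)}$ or meets it in a nonempty open segment; in the latter case the line pulls back under $\psi$ to a smooth algebraic arc $A \subset U$ parallel to one of the factors, on which by hypothesis $f|_A$ is regular, while $R$ restricts to a rational function on the irreducible curve underlying $A$. Since these functions agree on the nonempty open subset $A \cap V^{(k-1)}$ of $A$, they coincide as rational functions on the curve; in particular $\Pole(R) \cap A = \varnothing$ and $f = R$ on all of $A$. Hence $f = R$ on $V^{(k)}$ with $V^{(k)} \cap P = \varnothing$, and after $d$ steps $V^{(d)} = (-1,1)^d = U(p)$. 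With the claim in hand, the equality $f|_{U \setminus P} = R|_{U \setminus P}$ follows exactly as in Lemma~\ref{lem-2-6}: join $p_0 \in U_0$ to an arbitrary $p^* \in U$ by a continuous path, cover its image by finitely many product neighborhoods $U(p_0), \ldots, U(p_r)$ with $U(p_0) \subset U_0$, $p_r = p^*$ and nonempty consecutive overlaps, and induct on $j$ by applying the propagation claim to $V := U(p_j) \cap U(p_{j+1})$.

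For the codimension part, suppose $\codim P = 1$ and pick a general smooth point $q = (q_1, \ldots, q_n)$ on a codimension-$1$ component of $P$. The tangent space $T_q P$ has codimension $1$ in $T_q X = \bigoplus_i T_{q_i} X_i$, so some summand $T_{q_i} X_i$ is not contained in $T_q P$; using Bertini applied to $X_i$, pick a smooth algebraic arc $B_i \subset X_i(\R)$ through $q_i$ whose tangent at $q_i$ lies in $T_{q_i} X_i \setminus T_q P$. Then $B := \{q_1\} \times \cdots \times B_i \times \cdots \times \{q_n\}$ (suitably restricted) is a smooth algebraic arc parallel to the $i$th factor that crosses $\Pole(R)$ transversally at $q$. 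By hypothesis $f|_B$ is regular, and by the first part $f|_{B \setminus P} = R|_{B \setminus P}$, so $f|_B$ and $R|_B$ coincide as rational functions on the underlying curve; transversality, however, forces $R|_B$ to have a pole at $q$, a contradiction. The main obstacle is the propagation claim itself: unlike in Lemma~\ref{lem-2-6}, two nearby points cannot in general be joined by a single arc parallel to one factor, and one must iteratively saturate the coordinates one at a time, relying at each stage on the rigidity that a regular function and a rational function agreeing on a nonempty open subset of a smooth algebraic arc must agree as rational functions on the entire underlying curve.
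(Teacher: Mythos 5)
Your overall architecture is the same as the paper's: product-adapted analytic charts whose axis-parallel intervals pull back to smooth algebraic arcs parallel to the factors, a coordinate-by-coordinate saturation (the paper's double induction via the projections $\rho_k$), an outer induction along a chain of product neighborhoods covering a path from $U_0$, and a transversal arc through a general point of a codimension-one component of $P$ for the final contradiction. However, your propagation claim is genuinely wrong, and the error is exactly at the step ``they coincide as rational functions on the curve; in particular $\Pole(R)\cap A=\varnothing$ and $f=R$ on all of $A$.'' From the fact that $f|_A$ is regular and agrees with $R$ on a Zariski dense subset of the underlying curve $C$, you may conclude only that the rational function on $C$ representing $f|_A$ equals the \emph{restriction} $R|_C$; this does not prevent $A$ from meeting $\Pole(R)$ at points where $R|_C$ happens to extend regularly even though $R$ itself has a pole there as a function on $X$ (compare $R=x/y$ restricted to the $y$-axis). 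Consequently the conclusion $U(p)\cap P=\varnothing$ is unjustified, and it is in fact false: for $f(x,y)=x^3/(x^2+y^2)$, $f(0,0)=0$, on $U=\R^2=(\AB^1\times\AB^1)(\R)$, the restriction of $f$ to every arc parallel to a coordinate axis is regular and $f=R$ off the origin, yet the origin lies in $U\cap\Pole(R)$; your claim applied at $p=(0,0)$ (and then swept along paths) would force $U\cap\Pole(R)=\varnothing$, contradicting this example. Note that the lemma itself only asserts $\codim P\ge 2$ and $f=R$ \emph{off} $P$, not that $P$ is empty.

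The repair is the one the paper carries out: propagate only the weaker statement $f=R$ on $V^{(k)}\setminus P$, and run the outer induction only for target points $p\in U\setminus P$. Concretely, for $x\in V^{(k)}\setminus P$ the axis-parallel arc $A$ through $x$ has underlying curve not contained in $\Pole(R)$ (since $x\notin\Pole(R)$), so $A\cap\Pole(R)$ is finite; hence $A\cap V^{(k-1)}\setminus P$ is still Zariski dense in the curve, the two regular functions $f|_{A\setminus P}$ and $R|_{A\setminus P}$ agree there and therefore at $x$. With this bookkeeping your codimension argument goes through essentially as in the paper (there one simply takes an arc in the family meeting $P$ transversally at a general point, and uses that $f|_B$ is regular while $f|_{B\setminus P}=R|_{B\setminus P}$, so $R$ cannot have a pole along the component crossed by $B$). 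As written, though, the central claim of your proof proves too much and fails on the standard example above, so the argument has a genuine gap.
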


\begin{proof}
If $\dim X \leq 1$, then $f$ is a rational function, hence the
assertion holds. Suppose that $\dim X \geq 2$. The Zariski
closure of $U$ in $X$ is an irreducible component of $X$, so we
may assume that $X$ is irreducible.

First we prove that
\begin{equation}\label{eq-7-5-1}
f|_{U \setminus P} = R|_{U \setminus P}.
\end{equation}
Let $\A$ be the set of all smooth algebraic arcs contained in
$U$ and parallel to one of the factors of $X$. Each point $p \in
U$ has an arbitrarily small open neighborhood $U(p) \subset U$ of
the form $U(p) = U_1(p) \times \cdots \times U_n(p)$, where
$U_i(p) \subset X_i(\R)$ is an open subset such that any two points
of $U_i(p)$ belong to a smooth algebraic arc contained in
$U_i(p)$ (see the proof of Lemma~\ref{lem-2-6}). Fix a point $p_0
\in U_0$ and let $p \in U \setminus P$ be an arbitrary point. Let
$\gamma \colon [0,1] \to U$ be a continuous path with $\gamma(0)
= p_0$ and $\gamma(1)=p$. We can cover the compact set
$\gamma([0,1])$ by a finite collection of open sets $U(p_0),
U(p_1), \ldots, U(p_r)$ such that $U(p_0) \subset U_0$, $p_r = p$, and
the intersection $U(p_i) \cap U(p_{i+1})$ is
nonempty for all $i = 0, \ldots, r-1$. Now we use double
induction to prove that
\begin{equation}\label{eq-7-5-2}
f|_{U(p_i) \setminus P} = R|_{U(p_i) \setminus P}
\end{equation}
for $i = 0, 1, \ldots, r$. Equality~\eqref{eq-7-5-2} is obvious
for $i=0$. Suppose that \eqref{eq-7-5-2} holds for $i=j$, where $0
\leq j < r$. Set $V \coloneqq U(p_{j+1})$ and define $V(k)$
recursively:
\begin{gather*}
V(0) \coloneqq U(p_j) \cap U(p_{j+1}), \\
V(k) \coloneqq \rho_k^{-1}(\rho_k(V(k-1))) \cap V \quad \textrm{for} \ 1
\leq k \leq n,
\end{gather*}
where
\begin{equation*}
\rho_k \colon X \to X_1 \times \cdots \times X_{k-1} \times
X_{k+1} \times \cdots \times X_n
\end{equation*}
is the canonical projection. Clearly,
\begin{equation*}
V(0) \subset V(1) \subset \ldots \subset V(n) = V
\end{equation*}
are open subsets. Moreover, \eqref{eq-7-5-2} holds for $i = j+1$
if and only if
\begin{equation}\label{eq-7-5-3}
f|_{V(k) \setminus P} = R|_{V(k) \setminus P}
\end{equation}
for $k = 0, 1, \ldots, n$. Equality \eqref{eq-7-5-3} is obvious
for $k=0$. Suppose that \eqref{eq-7-5-3} holds for $k=l$, where
$0 \leq l < n$, and let $x \in V(l+1) \setminus P$ be an
arbitrary point. Pick a point $x_0 \in V(l) \setminus P$ for
which $\rho_{l+1}(x_0) = \rho_{l+1}(x)$. Then there exists an arc
$A \in \A$ such that $A \subset V$ and $x_0, x \in A$. The
functions $f|_{A \setminus P}$, $R|_{A \setminus P}$ are regular
and equal on the nonempty open subset $V(l) \cap (A \setminus P)$
of $A$, hence $f|_{A \setminus P} = R|_{A \setminus P}$ and $f(x)
= R(x)$. Thus, \eqref{eq-7-5-3} holds for $k = l+1$.
Consequently, \eqref{eq-7-5-3} holds for $k = 0, 1, \ldots, n$,
and \eqref{eq-7-5-2} holds for $i = j+1$. The double induction is
complete, which means that \eqref{eq-7-5-2} is established for $i
= 0, 1, \ldots, r$. Equality~\eqref{eq-7-5-1} follows.

It remains to prove that $\codim P \geq 2$. Suppose to the
contrary that $\codim P = 1$. Let $B$ be an arc in $\A$ that
meets $P$ transversally at a general point. Then $f|_B$ is a
regular function satisfying
\begin{equation*}
(f|_B)|_{B \setminus P}  = f|_{B \setminus P} = R|_{B \setminus
P},
\end{equation*}
which means that $R$ cannot have a pole along $B$, a
contradiction.
\end{proof}

We need another auxiliary result.

\begin{lemma}\label{lem-7-6}
Let $X = X_1 \times \cdots \times X_n$ be the product of real
algebraic varieties and let $f \colon U \to \R$ be a rational
function defined on a connected smooth open subset $U \subset
X(\R)$. Assume that the restriction of $f$ is regular on each
smooth algebraic arc contained in $U$ and parallel to one of the
factors of $X$. If $U_0 \subset U$ is a nonempty open subset and
$f$ vanishes on some dense subset of $U_0$, then $f$ is
identically equal to $0$ on $U$.
\end{lemma}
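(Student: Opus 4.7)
The plan is to use Lemma~\ref{lem-7-5} to pin down a rational representative $R$ of $f$ defined off a codimension-two set, and then exploit its continuity to propagate the vanishing to a full rational identity. As a preliminary reduction, since $U$ is connected and lies in the smooth locus of $X = X_1 \times \cdots \times X_n$, the Zariski closure $Y$ of $U$ in $X$ is a single irreducible component of $X$. Being an irreducible component of a product, $Y$ is necessarily of the form $Y_1 \times \cdots \times Y_n$, with each $Y_i$ an irreducible component of $X_i$. All hypotheses transfer from $X$ to $Y$: a smooth algebraic arc in $U$ parallel to a factor of $Y$ is also parallel to the corresponding factor of $X$. Thus I may assume that $X$ is irreducible.

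Since $f$ is rational on $U$, there exist a nonempty open $U_1 \subset U$ and a rational function $R$ on $X$ with $U_1 \subset X \setminus \Pole(R)$ and $f|_{U_1} = R|_{U_1}$. Lemma~\ref{lem-7-5} then gives that $P \coloneqq U \cap \Pole(R)$ has codimension at least two in $U$ and that $f = R$ on $U \setminus P$. Consequently, $U_0 \setminus P$ is a nonempty Euclidean open subset of $U_0$, and the hypothesis that $f$ vanishes on a dense subset of $U_0$ implies that $R$ vanishes on a dense subset of $U_0 \setminus P$. Because $R$ is regular, hence continuous, on $U \setminus P$, this forces $R \equiv 0$ on the whole open set $U_0 \setminus P$.

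At this stage, $R$ is a rational function on the irreducible variety $X$ that vanishes on a Euclidean open subset of $X(\R) \setminus \Sing(X)$ of full real dimension. Writing $R = p/q$ locally in an affine chart, the identity principle for real analytic functions (the polynomial $p$ is real-analytic and vanishes on a real-analytic submanifold of full dimension, hence identically) forces $R$ to be the zero rational function on $X$. Therefore $\Pole(R) = \varnothing$, so $P = \varnothing$ and $f = R \equiv 0$ on all of $U$. The most delicate point is the irreducibility reduction, which is needed both to ensure $Y$ inherits the product structure required for Lemma~\ref{lem-7-5} and to apply the identity principle globally; the remainder is a routine combination of Lemma~\ref{lem-7-5} with continuity.
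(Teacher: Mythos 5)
Your proof is correct, and it reaches the conclusion by a different packaging than the paper's. The paper does not cite Lemma~\ref{lem-7-5} inside this proof: it takes a regular representative $F$ of $f$ on a Zariski open dense subset $X^0$, shows $F\equiv 0$ there (so $f=0$ on $U\cap X^0$), and then, at an arbitrary point $p\in U$, re-runs the product-neighborhood double induction from the proof of Lemma~\ref{lem-7-5} to propagate the vanishing of $f$ itself to all of $U$, using regularity on arcs parallel to the factors. You instead use Lemma~\ref{lem-7-5} as a black box: rationality of $f$ supplies $R$ with $f=R$ on a nonempty open set, the lemma gives $f=R$ on $U\setminus P$ with $P=U\cap\Pole(R)$, density of the zero set plus continuity of $R$ off its poles gives $R\equiv 0$ on the open set $U_0\setminus P$, the identity principle (Zariski density of a full-dimensional open set of smooth real points of the irreducible closure) gives $R\equiv 0$ as a rational function, and then $\Pole(R)=\varnothing$ forces $P=\varnothing$, so the already-established equality reads $f\equiv 0$ on all of $U$ — no circularity, since Lemma~\ref{lem-7-5} precedes this lemma and does not depend on it. The propagation the paper repeats by hand is thus absorbed into Lemma~\ref{lem-7-5}, and the observation that the zero rational function has empty polar set removes the exceptional set; this is a legitimate and cleaner shortcut. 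One minor caveat: your blanket assertion that an irreducible component of a product of real varieties is a product of irreducible components is false over $\R$ in general (components need not be geometrically irreducible); it holds here because the component in question contains smooth real points, which makes each factor component geometrically irreducible — and in any case the reduction is dispensable, since Lemma~\ref{lem-7-5} does not assume irreducibility and the identity principle can be run on the single component containing $U$. This is at the same level of care as the paper's own reductions, so it is an imprecision rather than a gap.
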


\begin{proof}
By replacing $X$ with the Zariski closure of $U$ in $X$, we may
assume that $X$ is irreducible. Since $f$ is rational, there
exist a Zariski open dense subset $X^0 \subset X$ and a regular
function $F$ on~$X^0$ such that $f|_{U \cap X^0} = F|_{U \cap
X^0}$.


It follows that $F$ vanishes on  $X^0$, hence $f$ vanishes on $U \cap X^0$.
It remains to prove that
\begin{equation}\label{eq-7-6-1}
f(p)=0
\end{equation}
for an arbitrary point $p\in U$.  The argument is analogous to that used  in the proof 
Lemma \ref{lem-7-5}. We choose a neighborhood $V\subset U$ of $p$ of the form
$V= U_1\times \cdots \times U_n$, where $U_i \subset X_i(\R)$ is an open subset such 
any two points of $U_i$ belong to a smooth algebraic arc contained in $U_i$.
Define $V(k)$ recursively:
$$
V(0):= V\cap X^0,
$$
$$
V(k):= \rho_k^{-1}(\rho_k(V(k-1)))\cap V 
$$
for $1\le k \le n$, where
$$
 \rho_k: X \to X_1\times \cdots \times X_{k-1}\times X_{k+1}\times \cdots \times X_n
 $$
 is the canonical projection. Then
 $$
 V(0) \subset V(1) \subset \cdots \subset V(n)= V
 $$
 are open sets. Equality \eqref{eq-7-6-1} holds if 
 \begin{equation}\label{eq-7-6-2}
f|_{V(k)}=0
\end{equation}
for $k= 0,1,\dots, n$. Equality \eqref{eq-7-6-2} is obvious  for $k=0$. 
Suppose that  \eqref{eq-7-6-2} holds for $k=l$, where $0\le l <n$, and let $x\in V(l+1)$ be an arbitrary point.
Pick a point  $x_0 \in V(l)$ for which
$\rho_{l+1}(x_0)=\rho_{l+1}(x)$. Then there exists  a smooth algebraic arc $A\subset V$ that is parallel to one of the factors of $X$
and contains both points $x_0$ and $x$.
The function $f |_A$  is regular and equal to $0$ on the nonempty open subset 
$V(l)\cap A$, hence $f |_A=0$ and $f(x)=0$. Thus, \eqref{eq-7-6-2}  holds for $k=l+1$. By induction, \eqref{eq-7-6-2}  holds for 
 $k= 0,1,\dots, n$, which completes the proof.

\end{proof}

\subsection{Separately  rational functions}

The next two lemmas are refinements of \cite[pp.~199--201]{bib8}.

\begin{lemma}\label{lem-7-7}
Let $X$, $Y$ be real algebraic varieties and let $V \subset
X(\R)$, $W \subset Y(\R)$ be smooth open subsets. Assume that
$Y$ is affine and $W$ is connected. Let $f_1 \colon V \times W
\to \R, \ldots, f_r \colon V \times W \to \R$ be functions such
that for $i = 1, \ldots, r$ and each point $x \in V$ the function
\begin{equation*}
W \to \R,\quad y \mapsto f_i(x,y)
\end{equation*}
is regular on smooth algebraic arcs and it is not identically
equal to $0$. Assume that there are functions $\varphi_1 \colon
\Omega \to \R, \ldots, \varphi_r \colon \Omega \to \R$, defined
on some dense subset $\Omega \subset W$, for which the following
two conditions hold:
\begin{gather}
\label{eq-7-7-1}
\sum_{i=1}^r \varphi_i(y) f_i(x,y) = 0 \quad \textrm{for all}\
x \in V,\ y \in \Omega;\\
\label{eq-7-7-2}
\sum_{i=1}^r \varphi_i(y)^2 > 0 \quad \textrm{for all} \ y \in
\Omega.
\end{gather}
Then there exist regular functions $\Phi_1, \ldots, \Phi_r$ on $Y$
such that
\begin{equation}\label{eq-7-7-3}
\sum_{i=1}^r \Phi_i(y) f_i(x,y) = 0 \quad \textrm{for all}\ x\in
V,\ y \in W
\end{equation}
and the restrictions $\Phi_i|_W$ are not all identically equal to
$0$.
\end{lemma}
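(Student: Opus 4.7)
The plan is to use Theorem~\ref{th-2-4} to replace each slice function $y \mapsto f_i(x,y)$ by a genuine rational function on $Y$, then to produce the required $\Phi_i$ by a Cramer-rule construction applied to the resulting matrix of rational functions, and finally to invoke Lemma~\ref{lem-7-6} with $n=1$ to propagate the relation from a dense open subset of $W$ to all of $W$.

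First, since $W$ is a connected smooth open subset of $Y(\R)$, Theorem~\ref{th-2-4} applied to $y \mapsto f_i(x,y)$ for each fixed $x \in V$ produces a rational function $R_i^x$ on $Y$ with $f_i(x,y) = R_i^x(y)$ off a subset of $W$ of codimension at least $2$. Conditions~\eqref{eq-7-7-1} and~\eqref{eq-7-7-2} force, at every $y \in \Omega$, a nontrivial linear relation among the columns of the infinite matrix $M = (R_i^x)_{x \in V,\,1 \le i \le r}$. Passing if necessary to the irreducible component of $Y$ containing $W$, every $r \times r$ minor of a finite submatrix of $M$ vanishes on the dense subset $\Omega \subset W$ and is therefore identically zero as a rational function on $Y$. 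Consequently the generic rank $k$ of $M$ over $\R(Y)$ satisfies $1 \le k \le r-1$, the lower bound coming from the assumption that each $f_i(x,\cdot)$ is not identically zero.

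Pick rows $x_1, \dots, x_k \in V$ and, after relabeling, columns $1, \dots, k$ so that the $k \times k$ minor $D$ of $(R_i^{x_j})_{1 \le j,\,i \le k}$ is nonzero in $\R(Y)$. Fix any $l \in \{k+1, \dots, r\}$. By maximality of $k$, for every $x \in V$ the $(k+1) \times (k+1)$ minor on rows $\{x_1, \dots, x_k, x\}$ and columns $\{1, \dots, k, l\}$ has rank at most $k$, hence vanishes identically. Expanding along the bottom row gives rational functions $\Phi_i^{\mathrm{rat}}$ on $Y$, namely the signed $k \times k$ cofactors of the submatrix on rows $x_1, \dots, x_k$ and columns $\{1, \dots, k, l\}$, extended by zero outside these columns, satisfying $\Phi_l^{\mathrm{rat}} = \pm D \neq 0$ and $\sum_i \Phi_i^{\mathrm{rat}}\, R_i^x = 0$ in $\R(Y)$ for every $x \in V$. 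Since $Y$ is affine, multiplication by a common denominator $Q \in \OC(Y)$ yields regular functions $\Phi_i \coloneqq Q \cdot \Phi_i^{\mathrm{rat}}$ on $Y$ with $\Phi_l|_W \not\equiv 0$ and $\sum_i \Phi_i\, R_i^x = 0$ identically.

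It remains to pass from the rational-function identity to~\eqref{eq-7-7-3}. For fixed $x \in V$, set $h_x(y) \coloneqq \sum_i \Phi_i(y)\, f_i(x,y)$. Each $\Phi_i$ is regular on $Y$ and each $f_i(x,\cdot)$ is regular on smooth algebraic arcs in $W$, so $h_x$ is regular on smooth algebraic arcs in $W$; moreover $h_x$ coincides with the zero rational function on $W \setminus \bigcup_i \bigl(W \cap \Pole(R_i^x)\bigr)$, so $h_x$ is a rational function on $W$ (represented by $0$) that vanishes on a nonempty open subset. Lemma~\ref{lem-7-6} applied with $n = 1$ and $X_1 = Y$ then forces $h_x \equiv 0$ on $W$, which is~\eqref{eq-7-7-3}. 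The main obstacle is arranging the bookkeeping so that the rational function framework on $Y$ can be set up in the first place: one must invoke Theorem~\ref{th-2-4} before the cofactor argument, and one must verify that Lemma~\ref{lem-7-6} is applicable after clearing denominators; the linear algebra itself is standard, but everything hinges on having these two tools available.
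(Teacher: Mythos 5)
Your proposal is correct and takes essentially the same route as the paper: a vanishing-determinant/cofactor argument, Theorem~\ref{th-2-4} to convert functions of $y$ that are regular on smooth algebraic arcs into rational functions, affineness of $Y$ to clear denominators, and Lemma~\ref{lem-7-6} (in the one-factor case) to propagate the relation from a dense subset to all of $W$ and to secure nontriviality. The only difference is organizational: you rationalize the slices $y \mapsto f_i(x,y)$ at the outset and pick a maximal nonvanishing minor (generic rank over $\R(Y)$), whereas the paper works with the pointwise cofactors $h_i(x_1,\ldots,x_{r-1};y)$ and handles the degenerate situation by its Case~1/Case~2 reduction to smaller minors — the underlying linear algebra is the same.
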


\begin{proof}
For $x_1, \ldots, x_r$ in $V$ and $y$ in $W$, consider the matrix
\begin{equation*}
\left[
\begin{array}{ccc}
f_1(x_1,y) & \ldots & f_r(x_1,y)\\
\vdots & & \vdots\\
f_1(x_r,y) & \ldots & f_r(x_r,y)\\
\end{array}
\right].
\end{equation*}
Conditions \eqref{eq-7-7-1} and \eqref{eq-7-7-2} imply that the
determinant of this matrix is equal to $0$ for all ${x_1, \ldots, x_r \in
V}$ and $y \in \Omega$. Thus, substituting $x$ for $x_r$ and expanding
the determinant along the last row, we get a relation
\begin{equation}\label{eq-7-7-4}
\sum_{i=1}^r h_i(x_1, \ldots, x_{r-1}; y) f_i(x,y) = 0 \quad \textrm{for
all} \ x_1, \ldots, x_{r-1} \in V, \ y \in \Omega,
\end{equation}
where $h_i(x_1, \ldots, x_{r-1}; y)$ is equal to $(-1)^{i+r}$ multiplied
by the determinant of the matrix obtained from
\begin{equation*}
\left[
\begin{array}{ccc}
f_1(x_1,y) & \ldots & f_r(x_1,y)\\
\vdots & & \vdots\\
f_1(x_{r-1},y) & \ldots & f_r(x_{r-1},y)\\
\end{array}
\right]
\end{equation*}
by deleting the $i$th column.

We complete the proof in two steps.

\begin{case}\label{case-7-7-1}
Suppose that for some specific points $x_1 = a_1, \ldots, x_{r-1} =
a_{r-1}$ in $V$ the functions
\begin{equation*}
h_i \colon W \to \R, \quad h_i(y) \coloneqq h_i(a_1, \ldots, a_{r-1}; y)
\end{equation*}
are not all identically equal to $0$; say, $h_j$ is not identically $0$.

By construction, the functions $h_i$ are regular on smooth algebraic
arcs, hence they are rational functions by Theorem~\ref{th-2-4}. Since
$Y$ is an affine variety, we can find regular functions $F_i$, $G_i$
on~$Y$ such that $F_i|W = (G_i|_W)h_i$ and the zeros of $G_i$ are contained
in a Zariski nowhere dense subvariety $Z \subset Y$. Setting $\Phi_i
\coloneqq G_1 \cdots G_{i-1} F_i G_{i+1} \cdots G_r$, we get from
\eqref{eq-7-7-4} a relation
\begin{equation}\label{eq-7-7-5}
\sum_{i=1}^r \Phi_i(y) f_i(x,y) = 0 \quad \textrm{for all} \ x \in V, \
y \in \Omega.
\end{equation}
For each point $x \in V$, the function
\begin{equation*}
W \to \R, \quad y \mapsto \sum_{i=1}^r \Phi_i(y) f_i(x,y)
\end{equation*}
is regular on smooth algebraic arcs, hence rational by
Theorem~\ref{th-2-4}. Thus, according to Lemma~\ref{lem-7-6}, the
equality in \eqref{eq-7-7-5} holds for all $x \in V$, $y \in W$.
Furthermore, applying Lemma~\ref{lem-7-6}, we obtain that $\Phi_j|W$ is
not identically equal to $0$. The proof of Case~\ref{case-7-7-1} is
complete.
\end{case}

\begin{case}\label{case-7-7-2}
Suppose that for $i =1, \ldots, r$ the equality $h_i(x_1, \ldots,
x_{r-1}; y) = 0$ holds for all $x_1, \ldots, x_{r-1} \in V$, $y \in W$.

Then, for some integer $s$ satisfying $2 \leq s < r$, there is a matrix
of the form
\begin{equation*}
\left[
\begin{array}{ccc}
f_{k_1}(x_{k_1},y) & \ldots & f_{k_s}(x_{k_1},y)\\
\vdots & & \vdots\\
f_{k_1}(x_{k_s},y) & \ldots & f_{k_s}(x_{k_s},y)\\
\end{array}
\right]
\end{equation*}
such that its determinant is identically equal to $0$ for all $x_{k_1},
\ldots, x_{k_s} \in V$, $y \in W$, but at least one minor of order $s-1$
of this matrix is not identically~$0$. Thus, by Case~\ref{case-7-7-1},
there exist regular functions $\Phi_{k_1}, \ldots, \Phi_{k_s}$ on $Y$
such that
\begin{equation*}
\sum_{l=1}^s \Phi_{k_l}(y) f_{k_l} (x,y) = 0 \quad \textrm{for all} \
x \in V, \ y \in W
\end{equation*}
and the restrictions $\Phi_{k_l}|_W$ are not all identically equal to
$0$. We obtain \eqref{eq-7-7-3} by inserting some $\Phi_i$ identically
equal to $0$.\qedhere
\end{case}%
\end{proof}

The last lemma plays a crucial role in the proof of the following.

\begin{lemma}\label{lem-7-8}
Let $X$, $Y$ be affine real algebraic varieties and let ${V \subset
X(\R)}$, ${W \subset Y(\R)}$ be con\-nected smooth open subsets. Let $f
\colon V \times W \to \R$ be a function with the following two
properties:
\begin{nconditions}
\item\label{lem-7-8-1} for each point $x \in V$ the function
\begin{equation*}
W \to \R, \quad y \mapsto f(x,y)
\end{equation*}
is regular on smooth algebraic arcs and it is not identically equal to
$0$;
\item\label{lem-7-8-2} for each point $y \in W$ the function
\begin{equation*}
V \to \R, \quad x \mapsto f(x,y)
\end{equation*}
is rational and it is not identically equal to $0$ on any nonempty open
subset of $V$.
\end{nconditions}
Then there exist nonempty open subsets $V_0 \subset V$, $W_0 \subset W$
such that the restriction $f|_{V_0 \times W_0}$ is a rational function.
\end{lemma}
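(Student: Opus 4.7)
The plan is to adapt the Bochnak--Siciak strategy from \cite{bib8}: combine a Baire category argument on the ``complexity'' of the family of rational functions $R_y := f(\cdot, y) \in \R(X)$ with an application of Lemma~\ref{lem-7-7} to produce a global polynomial identity $P(x,y) f(x,y) = Q(x,y)$ on a product of open sets.

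After replacing $X$ by the irreducible component containing $V$, I may assume $X$ is irreducible, so $V$ is Zariski dense in $X$. Fix an embedding $X \subset \AB^N$ and denote by $\R[X]_d$ the finite-dimensional subspace of $\R[X]$ spanned by restrictions of polynomials of total degree at most $d$. By~(\ref{lem-7-8-2}) each $R_y$ is a nonzero element of $\R(X)$, hence $R_y = P_y/Q_y$ with $P_y, Q_y \in \R[X]$ and $Q_y \not\equiv 0$. Let $W_d$ be the set of those $y \in W$ for which such a representation can be chosen with $P_y, Q_y \in \R[X]_d$. Since $W = \bigcup_d \overline{W_d}$ and $W$ is a Baire space (being locally Euclidean), some $\overline{W_d}$ has nonempty interior; pick such a $d$, a connected nonempty open $W_1$ in that interior, and set $\Omega := W_d \cap W_1$, which is dense in $W_1$.

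Fix a basis $e_1, \ldots, e_m$ of $\R[X]_d$ and put $V_1 := V \setminus \bigcup_{j=1}^m \{e_j = 0\}$; this is a nonempty smooth open subset of $V$ that is still Zariski dense in $X$. On $V_1 \times W_1$ I introduce the $2m$ auxiliary functions
\begin{equation*}
h_i(x,y) := e_i(x) f(x,y), \qquad h_{m+j}(x,y) := e_j(x), \qquad 1 \leq i,j \leq m.
\end{equation*}
For every $x \in V_1$ each $h_k(x, \cdot)$ is regular on smooth algebraic arcs in $W_1$ and not identically zero: $h_{m+j}$ is a nonzero constant in $y$, while $h_i = e_i(x) f(x, \cdot)$ is a nonzero scalar multiple of $f(x, \cdot)$, which satisfies these properties by~(\ref{lem-7-8-1}). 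For $y \in \Omega$, expanding $Q_y = \sum_i \alpha_i(y) e_i$ and $P_y = \sum_j \beta_j(y) e_j$ yields the relation
\begin{equation*}
\sum_{i=1}^m \alpha_i(y) h_i(x,y) - \sum_{j=1}^m \beta_j(y) h_{m+j}(x,y) = 0 \quad \text{on}\ V_1,
\end{equation*}
with $(\alpha_i(y))_i \neq 0$, so that the sum of squares of the coefficients is strictly positive on $\Omega$. Applying Lemma~\ref{lem-7-7} with $V_1, W_1$ in place of $V, W$ then produces regular functions $\Phi_i, \Psi_j$ on $Y$, not all vanishing on $W_1$, such that, setting $P(x,y) := \sum_i \Phi_i(y) e_i(x)$ and $Q(x,y) := \sum_j \Psi_j(y) e_j(x)$, one has $P(x,y) f(x,y) = Q(x,y)$ throughout $V_1 \times W_1$.

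It remains to observe that $P \not\equiv 0$ on $V_1 \times W_1$: were it identically zero, the Zariski density of $V_1$ in $X$ together with the linear independence of $e_1, \ldots, e_m$ would force $\Phi_i|_{W_1} = 0$ for all $i$, which would in turn force $Q \equiv 0$ and hence $\Psi_j|_{W_1} = 0$ for all $j$, contradicting Lemma~\ref{lem-7-7}. Any nonempty open $V_0 \times W_0 \subset V_1 \times W_1$ avoiding the zero locus of $P$ then satisfies $f = Q/P$, making $f|_{V_0 \times W_0}$ the restriction of a rational function on $X \times Y$. The chief subtlety I expect is arranging the non-vanishing hypothesis of Lemma~\ref{lem-7-7}: this forces the passage from $V$ to $V_1$, and one must then verify that $V_1$ is still Zariski dense in $X$ so as to rule out the degenerate outcome $P \equiv 0$.
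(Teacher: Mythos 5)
Your argument is correct and essentially identical to the paper's own proof: a Baire category argument bounding the complexity of the rational functions $f(\cdot,y)$ (the paper counts basis elements of $\OC(X)$, you bound the degree), followed by an application of Lemma~\ref{lem-7-7} to the auxiliary functions $e_i(x)f(x,y)$ and $e_j(x)$ on a product of open sets where the basis elements are nonvanishing, your final paragraph merely making explicit the nonvanishing of the coefficient of $f$ that the paper leaves implicit. The only point you gloss over (as does the paper) is that condition~(\ref{lem-7-8-1}) gives $f(x,\cdot)\not\equiv 0$ on $W$ rather than on the smaller set $W_1$, a harmless gap repaired by Theorem~\ref{th-2-4} together with Lemma~\ref{lem-7-6}.
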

\newpage

\begin{proof}
\setcounter{equation}{\value{nconditionsi}}
Since $X$ is an affine real algebraic variety, the $\R$-algebra $\OC(X)$ of regular functions on $X$ is finitely generated.
Let $\{ P_k \}$ be a basis for the $\R$-vector space $\OC(X)$.

According to (\ref{lem-7-8-2}), for each $y \in W$ we get a relation
\begin{equation}\label{eq-7-8-3}
\left(\sum_{i=1}^m \varphi_i(y) P_i)(x) \right) f(x,y) + \sum_{j=1}^n
\psi_j(y) P_j(x) = 0 \quad \textrm{for all} \ x \in V,
\end{equation}
where $m$, $n$ are positive integers depending on $y$, and the
$\varphi_i(y)$, $\psi_j(y)$ are real numbers satisfying
\begin{equation}\label{eq-7-8-4}
\sum_{i=1}^m \varphi_i(y)^2 + \sum_{j=1}^n \psi_j(y)^2 > 0.
\end{equation}
For each pair $(m,n)$ of positive integers, let $W(m,n)$ denote the set
of those $y \in W$ for which there is a relation~\eqref{eq-7-8-3} with
property~\eqref{eq-7-8-4}. Then
\begin{equation*}
W = \bigcup_{m,n} W(m,n),
\end{equation*}
hence by Baire's theorem, there exists a nonempty open subset $W_0
\subset W$ such that the intersection $W_0 \cap W(m,n)$ is dense in
$W_0$ for some $(m,n)$. Fix such a pair $(m,n)$ and set $\Omega
\coloneqq W_0 \cap W(m,n)$. We now have functions $\varphi_i \colon
\Omega \to \R$, $\psi_j \colon \Omega \to \R$ such that
\eqref{eq-7-8-3}~and~\eqref{eq-7-8-4} hold for all $y \in \Omega$.

The subset
\begin{equation*}
V_0 \coloneqq \{ x \in V \mid P_k(x) \neq 0 \ \textrm{for all} \ k=1,
\ldots, m+n \}
\end{equation*}
of $V$ is nonempty and open. Define functions
\begin{equation*}
f_1, \ldots, f_{m+n} \colon V_0 \times W_0 \to \R,
\end{equation*}
by $f_i(x,y) = P_i(x)f(x,y)$ for $i=1, \ldots, m$ and $f_{m+j}(x,y) =
P_j(x)$ for $j=1, \ldots, n$. By (\ref{lem-7-8-1}) and
Lemma~\ref{lem-7-7}, there exist regular functions $\Phi_i$, $\Psi_j$
on~$Y$ such that
\begin{equation*}
\left( \sum_{i=1}^m \Phi_i(y) P_i(x) \right) f(x,y) + \sum_{j=1}^n
\Psi_j(y) P_j(y) = 0 \quad \textrm{for all} \ x \in V_0, \ y \in W_0
\end{equation*}
and the restrictions $\Phi_i|_{W_0}$, $\Psi_j|_{W_0}$ are not all
identically equal to $0$. It follows that $f|_{V_0 \times W_0}$ is a
rational function.
\end{proof}

\begin{proof}[Proof of Theorem~\ref{th-7-1}]
By replacing $X_i$ with an affine open subset containing $X_i(\R)$, we
may assume that each $X_i$ is an affine variety.

We use induction on $n$. For $n=1$, Theorem~\ref{th-7-1} coincides with
Theorem~\ref{th-2-4}.

Suppose that $n \geq 2$. Let $U_1 \subset X_1(\R), \ldots, U_n \subset
X_n(\R)$ be connected smooth open subsets such that $U_1 \times \cdots
\times U_n \subset U$. By the induction hypothesis, for each point $x_n
\in U_n$ the function
\begin{equation*}
U_1 \times \cdots \times U_{n-1} \to \R, \quad (x_1, \ldots, x_{n-1})
\mapsto f(x_1, \ldots, x_{n-1}, x_n)
\end{equation*}
is rational. Furthermore, it is clear that this function is regular on
each smooth algebraic arc contained in $U_1 \times \cdots \times
U_{n-1}$ and parallel to one of the factors of $X_1 \times \cdots \times
X_{n-1}$. Consider the relation
\begin{equation}\label{eq-star}\tag{$\ast$}
f(x_1, \ldots, x_{n-1}, x_n) = 0.
\end{equation}
Denote by $\Delta$ the set of those points $x_n \in U_n$ for which
\eqref{eq-star} holds for all $(x_1, \ldots, x_{n-1})$ in ${U_1 \times
\cdots \times U_{n-1}}$. If $\Delta$ is dense in $U_n$, then $f$ is
identically equal to~$0$ on $U_1 \times \cdots \times U_{n-1} \times
U_n$ by Lemma~\ref{lem-7-6} and Theorem~\ref{th-2-4}.

Suppose that $\Delta$ is not dense in $U_n$ and let ${W \subset U_n}$ be a
connected open subset disjoint from~$\Delta$. Denote by $\Gamma$ the set
of those points $(x_1, \ldots, x_{n-1})$ in $U_1 \times \cdots \times
U_{n-1}$ for which \eqref{eq-star} holds for all~$x_n \in W$. If $\Gamma$
is dense in $U_1 \times \cdots \times U_{n-1}$, then $f$ is identically
equal to~$0$ on  $U_1 \times \cdots \times U_{n-1} \times W$ by
Lemma~\ref{lem-7-6}.

Suppose that $\Delta$ (resp.~$\Gamma$) is not dense in $U_n$ (resp.~$U_1
\times \cdots \times U_{n-1}$) and let $V \subset U_1 \times \cdots
\times U_{n-1}$ be a connected open subset disjoint from $\Gamma$. By
Lemma~\ref{lem-7-6}, the assumptions of Lemma~\ref{lem-7-8} are
satisfied for the function $f|_{V \times W}$. Hence the restriction
$f|_{V_0 \times W_0}$ is a rational function for some nonempty open
subsets $V_0 \subset V$, $W_0 \subset W$.

It follows from Lemma~\ref{lem-7-5} that Theorem~\ref{th-7-1} holds in
each of the cases considered above.
\end{proof}

\begin{remark} \label{rem-7-9} If in Theorem~\ref{th-7-1} $\dim X_i = 1$ for
$i=1,\dots, n$, then its proof given above does not depend on Theorem~\ref{th-2-4}.
In particular, one can obtain Corollary~\ref{cor-7-2} without making use of  Theorem~\ref{th-2-4}
and without involving analytic functions.
\end{remark}
\section{Regular functions}\label{sec-8}
As applications of Corollary~\ref{cor-7-2} and Theorem~\ref{th-4-3}, we
obtain two results on regular functions. For these applications we also
need a characterization of real analytic functions, recalled in
Theorem~\ref{th-8-3}.
For the sake of clarity, we state explicitly the following  simple observation.
If $X$ is a smooth real algebraic variety, $U\subset X(\R)$ an open subset, and $f: U\to \R$
a rational function which is analytic, then $f$ is a regular function \cite[Proposition~2.1]{bib12}.
This can also be seen  directly, since for each point $x\in U$, the ring of germs of real analytic functions at $x$ 
is faithfully flat over the ring of germs of regular functions at $x$.

\begin{theorem}\label{th-8-1}
Let $U \subset \R^n$ be a connected open subset, $n \geq 2$. For a
function $f \colon U \to \R$, the following conditions are equivalent:
\begin{conditions}
\item\label{th-8-1-a} $f$ is regular.

\item\label{th-8-1-b} For each $2$-dimensional affine plane $M \subset
\R^n$ the restriction of $f$ is regular on each connected component of
$U \cap M$.
\end{conditions}
\end{theorem}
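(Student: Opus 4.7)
The implication (\ref{th-8-1-a})~$\Rightarrow$~(\ref{th-8-1-b}) is immediate, as the restriction of a regular function to any subset is regular. For the converse, my plan is to establish successively that $f$ is a rational function, then that $f$ is real analytic, and finally to combine the two to obtain regularity.

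For rationality, I would verify the hypothesis of Corollary~\ref{cor-7-2}. Let $I \subset U$ be any open interval parallel to a coordinate axis. Then $I$ is contained in a line $L$, and $L$ can be extended to a $2$-dimensional affine plane $M \subset \R^n$. Since $I$ is connected, it lies in a single connected component $C$ of $U \cap M$, and by (\ref{th-8-1-b}) the restriction $f|_C$ is regular; hence $f|_I$ is regular. Corollary~\ref{cor-7-2} then produces a rational function $R$ on $\AB^n$ with $P \coloneqq U \cap \Pole(R)$ of codimension at least $2$ and $f|_{U \setminus P} = R|_{U \setminus P}$. In particular, $f$ is a rational function on $U$ in the sense of Definition~\ref{def-1-1}.

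For analyticity, condition~(\ref{th-8-1-b}) gives that for every $2$-dimensional affine plane $M \subset \R^n$ the restriction $f|_{U \cap M}$ is regular, hence real analytic, on each connected component of $U \cap M$. Invoking Theorem~\ref{th-8-3}, I conclude that $f$ is real analytic on $U$. Since $f$ is a rational function on the open subset $U$ of the smooth real algebraic variety $\AB^n$ and is simultaneously analytic, the observation recorded at the start of this section (a rational analytic function on an open subset of a smooth variety is regular) shows that $f$ is regular.

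The main obstacle is the analyticity step, and specifically the passage from regularity on every $2$-plane section to joint analyticity via Theorem~\ref{th-8-3}. It is essential here that the hypothesis is imposed on $2$-dimensional planes rather than merely on axis-parallel intervals: Theorem~\ref{th-3-1} by itself would only give analyticity on some nonempty open subset of $U$, which would not suffice to force $f = R$ on all of $U$ and thereby rule out poles along the codimension-$2$ set $P$.
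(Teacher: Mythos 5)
Your proposal is correct and follows essentially the same route as the paper's own proof: regularity on axis-parallel intervals plus Corollary~\ref{cor-7-2} gives rationality, Theorem~\ref{th-8-3} gives analyticity, and the observation that a rational analytic function on an open subset of a smooth variety is regular concludes. Your added details (extending an interval to a $2$-plane and staying in one connected component, and the remark on why Theorem~\ref{th-3-1} alone would not suffice) are sound elaborations of the same argument.
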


\begin{proof}
It is clear that (\ref{th-8-1-a}) implies (\ref{th-8-1-b}).

Suppose that (\ref{th-8-1-b}) holds. Then the restriction of $f$ is a
regular function on each open interval contained in $U$. By
Corollary~\ref{cor-7-2}, $f$ is a rational function. Furthermore, $f$ is
also an analytic function according to Theorem~\ref{th-8-3}.
Consequently, $f$ is regular, being rational and analytic.
 Thus, (\ref{th-8-1-b})
implies (\ref{th-8-1-a}).
\end{proof}

For varieties we have the following.

\begin{theorem}\label{th-8-2}
Let $X$ be a smooth real algebraic variety of pure dimension $n \geq 2$
and let $U \subset X(\R)$ be an open subset. For a function $f \colon U
\to \R$, the following conditions are equivalent:
\begin{conditions}
\item\label{th-8-2-a} $f$ is regular.

\item\label{th-8-2-b} For each irreducible real algebraic surface ${S
\subset X}$ the restriction of $f$ is regular on\linebreak ${U \cap (S \setminus
\Sing(S))}$.
\end{conditions}
\end{theorem}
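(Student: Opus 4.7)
The implication $(\ref{th-8-2-a}) \Rightarrow (\ref{th-8-2-b})$ is immediate, since restrictions of regular functions to locally closed subvarieties are regular. For the converse, my plan is first to show that $f$ is a rational function on $U$ via Theorem~\ref{th-4-3}, then to deduce regularity by a direct surface-restriction contradiction argument.

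For rationality, I would verify the hypotheses of Theorem~\ref{th-4-3}: that $f$ is regular on smooth algebraic arcs in $X(\R)$ and rational on irreducible algebraic curves in $X$. Given a smooth algebraic arc $A \subset U$ contained in an irreducible curve $C \subset X$ and a point $x \in A$, I would construct an irreducible real algebraic surface $S \subset X$ containing $C$ and smooth at $x$ by a Bertini-type argument: embed $X \subset \PB^N$ and take $S$ to be the complete intersection $X \cap H_3 \cap \cdots \cap H_n$ for $n-2$ general hypersurfaces $H_3, \ldots, H_n$ in $\PB^N$ passing through $C$ (when $n=2$, take $S$ to be the irreducible component of $X$ containing $x$). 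Iterated Bertini ensures $S$ is irreducible for generic choice, and genericity at the single point $x$ ensures $x \in S \setminus \Sing(S)$. Condition $(\ref{th-8-2-b})$ then yields that $f|_{U \cap (S \setminus \Sing(S))}$ is a regular function, so $f|_{U_x \cap A}$ is regular for some open neighborhood $U_x$ of $x$. The rationality of $f$ on algebraic curves is verified in the same manner. Theorem~\ref{th-4-3} then gives that $f$ is hereditarily rational, and hence a rational function on $U$.

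For regularity at each $x_0 \in U$, suppose by contradiction that $f$ is not regular at $x_0$. Smoothness of $X$ implies that $\OC_{X, x_0}$ is a regular local ring (hence a UFD), so we may write $f = p/q$ with $p, q \in \OC_{X, x_0}$ coprime and $q(x_0) = 0$. Since every height-$1$ prime of a UFD is principal, coprimality forces $V(p) \cap V(q)$ to have codimension $\geq 2$ in $X$ near $x_0$. I would then construct, again by a Bertini-type argument, an irreducible real algebraic surface $S \subset X$ smooth at $x_0$ such that $V(p) \cap V(q) \cap S$ is finite near $x_0$: taking $S = X \cap H_3 \cap \cdots \cap H_n$ for general hypersurfaces $H_i$ through $x_0$ works by the dimension count. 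Finiteness of this intersection prevents $p|_S$ and $q|_S$ from sharing an irreducible factor in $\OC_{S, x_0}$, so $f|_S = p|_S / q|_S$ is a rational function on $S$ with coprime numerator and denominator and $q|_S(x_0) = 0$; hence $f|_S$ is not regular at $x_0$. Since $x_0 \in S \setminus \Sing(S)$, this contradicts condition $(\ref{th-8-2-b})$.

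The principal obstacle will be the Bertini-type constructions: one must choose the hypersurfaces $H_i$ so that, simultaneously, the cut surface $S$ is irreducible, smooth at a prescribed point, contains a given curve (in the rationality step), or meets $V(p) \cap V(q)$ in a finite set (in the regularity step). Each requirement holds on a Zariski-dense open subset of the parameter space of $(n-2)$-tuples of hypersurfaces, so the task is to verify, by standard dimension counts, that these open conditions have non-empty intersection.
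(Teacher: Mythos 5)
Your first half (deducing that $f$ is rational by verifying the hypotheses of Theorem~\ref{th-4-3} with Bertini-type surfaces through a given curve and a given smooth point) is essentially the paper's own argument. Your second half, however, replaces the paper's route --- the paper shows $f$ is \emph{analytic} by pulling back to local algebraic coordinates and applying Theorem~\ref{th-8-3} (Bochnak--Siciak), and then uses that a rational analytic function on a smooth variety is regular --- and your local-algebra alternative has a genuine gap at its very first move. You write: ``suppose $f$ is not regular at $x_0$; write $f=p/q$ with $p,q\in\OC_{X,x_0}$ coprime and $q(x_0)=0$.'' This silently identifies the function $f$ with the rational function $R$ representing it. But at this stage all you know (from Theorem~\ref{th-4-3}, or from Theorem~\ref{th-2-4} componentwise) is that $f=R$ on $U\cap Y^0$ for some Zariski open dense $Y^0$; on $U\setminus Y^0$ nothing constrains $f$, since neither continuity nor analyticity has been established. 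Hence $f$ can fail to be regular at $x_0$ in the sense of Definition~\ref{def-1-1} even though $R\in\OC_{X,x_0}$, simply because $f\neq R$ at points of $U\setminus Y^0$ near $x_0$ (possibly at $x_0$ itself). Your contradiction argument only excludes the case $R\notin\OC_{X,x_0}$; it never shows $f|_{U\cap X_{x_0}}=R|_{U\cap X_{x_0}}$, which is exactly what Definition~\ref{def-1-1} demands and exactly what the paper's analyticity detour supplies.

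The gap is fillable inside your framework, by one more application of condition~(\ref{th-8-2-b}): for any $z\in U$ with $R\in\OC_{X,z}$, choose a generic irreducible surface $S\ni z$, smooth at $z$, contained neither in $X\setminus Y^0$ nor in $\Pole(R)$; the regular function furnished by~(\ref{th-8-2-b}) agrees with $f=R$ on a subset of $U\cap(S\setminus\Sing(S))$ that is Zariski dense in $S$, hence equals $R|_S$ near $z$, giving $f(z)=R(z)$. Combined with your contradiction step this yields $f=R$ on $U\cap\{q\neq0\}$ and hence regularity at $x_0$; with that addition your argument is a legitimate, more algebraic alternative to the paper's use of Bochnak--Siciak. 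Two smaller cautions: in the coprimality-descent you must read ``$V(p)\cap V(q)\cap S$ finite near $x_0$'' for the zero set of the ideal $(p,q)$ (equivalently after complexification), not merely for real points --- otherwise a common factor such as $u^2+v^2$, whose real zero set is a single point, is not excluded; and the Bertini statements you invoke (irreducibility over $\R$ of complete intersections through a prescribed point or containing a prescribed curve, with smoothness at the prescribed real point) need hypersurfaces of large degree, not hyperplanes, though the paper is equally terse on this point.
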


\begin{proof}
It is clear that (\ref{th-8-2-a}) implies (\ref{th-8-2-b}).

Suppose that (\ref{th-8-2-b}) holds.

First we prove that $f$ is rational. To this end consider an irreducible
real algebraic curve $C \subset X$ which has a smooth point $x \in
C(\R)$. We can find an irreducible real algebraic surface $S \subset X$
such that $C \subset S$ and $x$ is a smooth point of $S$. Condition
(\ref{th-8-2-b}) implies that $f|_{U \cap C}$ is regular at $x$.
Consequently, $f$ is rational on algebraic curves and regular on smooth
algebraic arcs. By Theorem~\ref{th-4-3}, $f$ is rational.

Next we show that $f$ is analytic. For each point $p \in U$, we can find
a Zariski open neighborhood $X(p) \subset X$, a real morphism $\varphi
\colon X(p) \to \AB^n$ and an open neighborhood 
$U(p) \subset U\cap X(p)$ such that $\varphi(U(p)) =
(-1, 1)^n \subset \R^n$ and the restriction $\psi \colon U(p) \to (-1,
1)^n$ of $\varphi$ is a real analytic diffeomorphism. Let $M \subset
\R^n$ be a $2$-dimensional affine plane and let $S$ be the Zariski
closure of $N \coloneqq \psi^{-1} ( (-1, 1)^n \cap M)$. Then $S \subset
X$ is an irreducible real algebraic surface and $N \subset S \setminus
\Sing(S)$. Condition (\ref{th-8-2-b}) implies that $f|_N$ is a regular
function, hence $(f \circ \psi^{-1})|_{(-1,1)^n \cap M}$ is an analytic
function. By Theorem~\ref{th-8-3}, $f \circ \psi^{-1}$ is an analytic
function. Consequently, $f|_{U(p)}$ is analytic, which means that so is
$f$.

In conclusion, $f$ is regular, being rational and analytic.
Thus, (\ref{th-8-2-b}) imp\-lies~(\ref{th-8-2-a}).
\end{proof}

We have used a result of Bochnak and Siciak \cite{bib5}, stated below as
Theorem~\ref{th-8-3}. Since \cite{bib5} is not easily available, we show
how to derive Theorem~\ref{th-8-3} from \cite{bib7}.

\begin{theorem}\label{th-8-3}
Let $f \colon U \to \R$ be a function defined on an open subset $U
\subset \R^n$, $n\geq 2$. Assume that for each $2$-dimensional affine
plane $M \subset \R^n$ the restriction $f|_{U \cap M}$ is an analytic
function. Then $f$ is analytic.
\end{theorem}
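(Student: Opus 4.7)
The plan is to deduce Theorem~\ref{th-8-3} from a criterion of Bochnak in \cite{bib7} characterizing real analyticity of a function on an open subset of $\R^n$ via analyticity along lines together with uniform Cauchy-type bounds on the Taylor coefficients. Since analyticity is a local property, it suffices to fix $x_0 \in U$ and show that $f$ is analytic in a neighborhood of $x_0$; after translation we may take $x_0 = 0$ and choose $r>0$ with $\overline{B}(0,2r) \subset U$.

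From the hypothesis, the restriction of $f$ to every $2$-plane through $0$ is analytic, and a fortiori the restriction of $f$ to every line through $0$ is analytic. The $2$-plane hypothesis furnishes candidates for the Taylor coefficients of $f$ at $0$: for any $v,w \in \R^n$ the function $(s,t)\mapsto f(sv+tw)$ has a convergent Taylor expansion at the origin, and a polarization argument identifies all of these bivariate expansions as coming from a single sequence of symmetric multilinear forms $T_k \colon (\R^n)^k \to \R$ satisfying $f(tv) = \sum_{k\ge 0} (t^k/k!)\,T_k(v,\ldots,v)$ for each $v$ and each $t$ sufficiently small (depending on $v$). These $T_k$ provide the formal Taylor series $\sum_\alpha c_\alpha x^\alpha$ of $f$ at $0$.

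The main obstacle, and the crux of Bochnak's theorem in \cite{bib7}, is to pass from the \emph{pointwise} bounds $|T_k(v,\ldots,v)| \le C_v/\rho_v^k$ supplied by analyticity on each line to a \emph{uniform} bound $|T_k(v_1,\ldots,v_k)| \le C\rho^{-k}|v_1|\cdots|v_k|$ valid on a common neighborhood of $0$. Once this uniform estimate holds, $\sum_\alpha c_\alpha x^\alpha$ converges absolutely on $B(0,\rho)$, and its sum must equal $f$ there by agreement along every line through $0$. The uniformization proceeds by a Baire/Banach--Steinhaus argument: for each positive integer $j$ the set $F_j \coloneqq \{v \in \R^n : |T_k(v,\ldots,v)| \le j^{k+1}/k!\text{ for all }k\in\N\}$ is closed (the maps $v \mapsto T_k(v,\ldots,v)$ being continuous in view of $2$-plane analyticity) and $\bigcup_j F_j = \R^n$ by pointwise analyticity along lines. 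Hence some $F_{j_0}$ has nonempty interior, and rescaling together with polarization spreads the bound to the required uniform estimate. Invoking Bochnak's theorem from \cite{bib7} thus yields the analyticity of $f$ at $x_0$ and completes the proof.
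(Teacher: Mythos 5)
The gap is in the construction of the forms $T_k$. From the hypothesis you know, for each fixed pair $v,w$, that $(s,t)\mapsto f(x_0+sv+tw)$ is analytic near the origin, hence that $g_k(v):=\frac{d^k}{dt^k}f(x_0+tv)\big|_{t=0}$ restricts to a homogeneous polynomial of degree $k$ on every $2$-dimensional subspace. But polarization does not manufacture a symmetric $k$-linear form on $(\R^n)^k$ out of this: for $k\ge 3$ linearly independent vectors $v_1,\dots,v_k$, the polarization formula evaluates $g_k$ at points $\epsilon_1v_1+\cdots+\epsilon_kv_k$ lying in a $k$-dimensional subspace, where the $2$-plane hypothesis gives no direct information, and multi-additivity in three or more vector arguments is precisely what is not yet known. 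The same unproved fact is used again when you assert that $v\mapsto T_k(v,\dots,v)$ is continuous (to make $F_j$ closed) and when you ``spread the bound by rescaling and polarization'': all of this presupposes that $g_k$ is a genuine polynomial of degree $k$ on all of $\R^n$, not merely plane by plane. That these steps are not routine is illustrated by $f(x,y)=xy^2/(x^2+y^4)$, $f(0,0)=0$, which is analytic on every affine line in $\R^2$ yet discontinuous at the origin, with $g_1(v)=v_2^2/v_1$ neither polynomial nor continuous; so some genuine theorem converting linewise (or planewise) polynomiality of the differentials into global polynomiality has to be invoked, and your sketch never supplies it.

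This is exactly where the paper's proof differs: it notes that $\delta_x^k f$ is a homogeneous polynomial of degree $k$ on every $2$-plane, hence a polynomial on every affine line (an affine line together with $x$ spans a plane), and then quotes Bochnak--Siciak \cite[Lemma~1]{bib6} to conclude that $\delta_x^k f$ is a polynomial on $\R^n$; with that established it simply applies \cite[Theorem~7.5]{bib7}, remarking that the continuity hypothesis there is needed only in infinite-dimensional spaces, rather than re-running the Baire/uniformization machinery. Your closing sentence ``invoking Bochnak's theorem from \cite{bib7}'' inherits the same problem in citation form: to apply that theorem one must first verify that the directional differentials are polynomials, which is the step your polarization claim skips. (Minor points: the bound defining $F_j$ should be $|T_k(v,\dots,v)|\le j^{k+1}k!$ rather than $j^{k+1}/k!$; and Baire gives the bound only on a cone of directions, so the Chebyshev-type estimate you allude to for homogeneous polynomials is indeed needed -- again only available once $g_k$ is known to be a polynomial. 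Your final identification of $f$ with the sum of the series along lines through $x_0$ is fine.)
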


\begin{proof}
The assumption implies that for $x \in U$ and $k \in \N$, the function
\begin{equation*}
\delta_x^k f \colon \R^n \to \R, \quad
\delta_x^k f(v) \coloneqq \left. \frac{d^k}{dt^k} f(x+tv)\right|_{t=0}
\end{equation*}
is well defined. Furthermore, the restriction of $\delta_x^k f$ to any
$2$-dimensional vector subspace of $\R^n$ is a homogeneous polynomial of
degree~$k$. In particular, the restriction of $\delta_x^k f$ to any
affine line in~$\R^n$ is a polynomial function. By \cite[Lemma~1]{bib6}
the latter property implies that $\delta_x^k f$ is a polynomial
function. Thus, according to \cite[Theorem~7.5]{bib7}, $f$ is an
analytic function (in \cite[Thoerem~7.5]{bib7} it is also assumed that
$f$ is continuous, but this is required only for functions defined in
topological vector spaces of infinite dimension).
\end{proof}

\section{Rational functions on complex algebraic varieties}\label{sec-9}
\subsection{Preliminaries}
Throughout this section, a complex algebraic variety is a
quasi-projective variety $X$ defined over $\CB$ (always reduced but
possibly reducible). By a subvariety we mean a closed subvariety. We
regard $X(\CB)$, the set of complex points of $X$, as a complex analytic
variety. An open subset $U \subset X(\CB)$ is said to be \emph{smooth}
if it is contained in $X \setminus \Sing(X)$.

Our goal is to present complex counterparts of the results obtained in
the preceding sections. We restrict our attention to functions defined
on some open subset $W \subset X(\CB)$ since for complex varieties only
this case seems to be of interest.

We say that a function $f \colon W \to \CB$ is \emph{regular at a point
$x \in W$} if for some regular function~$\Phi_x$ defined on a Zariski
open neighborhood $X_x \subset X$ of $x$ the equality $f|_{W \cap X_x} =
\Phi_x|_{W \cap X_x}$ holds. Moreover, $f$ is said to be \emph{regular}
if it is regular at each point in $W$.

We say that $f$ is \emph{regular on smooth algebraic arcs} if the
restriction $f|_A$ is a regular function for each smooth algebraic arc
$A \subset W$. Here $A$ is called a \emph{smooth algebraic arc} if it is
a smooth open subset of $C(\CB)$, where $C \subset X$ is an irreducible
complex algebraic curve, that is homeomorphic to the unit open disc $\DB
\coloneqq \{ z \in \CB \colon |z| <1 \}$.

We say that $f$ is a \emph{rational function} if there exist a rational
function $R$ on $X$ and a Zariski open dense subset $X^0 \subset X$ such
that $X^0 \subset X \setminus \Pole(R)$ and $f|_{W \cap X^0} = R|_{W
\cap X^0}$. This is consistent with Definition~\ref{def-1-1} since the
Zariski closure of $W$ in $X$ is the union of some irreducible
components of~$X$.

We are mainly interested in continuous rational functions. By the
Riemann extension theorem, if $f$ is continuous rational and $X$ is a
normal variety, then $f$ is regular. However if $X$ is not normal it can happen that
$f$ is continuous rational but not regular.
\begin{example}\label{ex-9-1}
Let $X(\CB)= (z^2 - x^3y=0)$ and $f(x,y,z) = \frac{z^3}{x^4y}$. Then
$|f(x,y,z)|= |xy|^{1/2}$ on $X(\CB)$. Hence $f$ extends to a continuous rational function which is not regular at the origin.
\end{example}

 In general, continuous rational
functions are related to the notions of seminormality and
seminormalization; see \cite{bibA1, bibA2} or
\cite[Section~10.2]{bibA4}.

We say that $f$ is \emph{continuous rational on algebraic arcs} or
\emph{arc-rational} for short if for every point $x \in W$ and every
irreducible complex algebraic curve $C \subset X$, with $x \in C(\CB)$,
there exists an open neighborhood $U_x \subset W$ of $x$ such that the
function $f|_{U_x \cap C}$ is continuous rational.

\subsection{Complex regular functions}
The following corresponds to Theorem~\ref{th-2-4}.

\begin{theorem}\label{th-9-1}
Let $X$ be a complex algebraic variety, $U \subset X(\CB)$ a connected
smooth open subset, and $f \colon U \to \CB$ a function regular on
smooth algebraic arcs. Then $f$ is a regular function.
\end{theorem}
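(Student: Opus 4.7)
The plan is to adapt the three-step strategy from the proof of Theorem~\ref{th-2-4} to the complex setting, exploiting at the end that on a smooth complex variety a rational function whose polar set has codimension at least two is automatically regular.

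For the local step, I would fix $p\in U$ and choose, exactly as in Lemma~\ref{lem-2-6}, a Zariski open neighborhood $X(p)\subset X$, a regular morphism $\varphi\colon X(p)\to\AB^d$ (with $d=\dim X$), and an open neighborhood $V(p)\subset U\cap X(p)$ of $p$ such that the restriction $\psi\colon V(p)\to\DB^d$ of $\varphi$ is a biholomorphism onto a polydisc. For every open disc $I\subset\DB^d$ parallel to a coordinate axis, $\psi^{-1}(I)$ is a smooth algebraic arc in the sense of Section~\ref{sec-9}, so by hypothesis $F\coloneqq f\circ\psi^{-1}$ is regular on $I$; in particular $F$ is separately holomorphic on $\DB^d$ and is a rational function with no poles on each axis-parallel line. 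Hartogs' theorem on separate holomorphy yields that $F$ is holomorphic on $\DB^d$, and the complex analog of the Bochnak--Siciak theorem (a holomorphic function on a polydisc which is rational on each coordinate line is globally rational) then shows that $F$ is the restriction of a rational function without poles on $\DB^d$. Consequently $f|_{V(p)}$ is the restriction of a rational function on $X$, regular throughout $V(p)$.

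Next I would carry out a complex version of Proposition~\ref{prop-2-5} via Bertini. Assuming $X$ irreducible and $d\geq 2$ (the case $d\leq 1$ being immediate), take a nonempty open $W\subset U$ on which $f$ is the restriction of a rational function as produced above. The Zariski closure of the graph of $f|_W$ in $X\times\AB^1$ lies in an irreducible hypersurface $Y\subset X\times\AB^1$; if the first projection $\pi_1\colon Y\to X$ had degree $m>1$, Bertini would furnish a linear subspace $L\subset\PB^N$ such that $\pi_1^{-1}(X\cap L)\to X\cap L$ is an irreducible cover of degree $m$, and $W\cap L$ would contain a smooth algebraic arc $A$ on which the graph of $f|_A$ lies on this cover, contradicting the regularity of $f|_A$. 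Hence $m=1$, and we obtain a rational function $R$ on $X$ together with a nonempty open $U_0\subset U\setminus\Pole(R)$ satisfying $f=R$ on $U_0$.

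Finally, a verbatim adaptation of Lemma~\ref{lem-2-6} --- using the neighborhoods $V(p)$ from the local step, in which any two points are joined by an axis-parallel disc and hence by a smooth algebraic arc --- will propagate $f=R$ from $U_0$ along a chain of such neighborhoods to every point of $U\setminus P$, where $P\coloneqq U\cap\Pole(R)$ has codimension at least $2$ in $U$. Because $U$ is smooth, the pole divisor of $R$ on $U$ must be of pure codimension one, hence empty, so $R$ is regular on all of $U$; since $f$ and $R$ are continuous (continuity of $f$ comes from its local holomorphy established above) and agree on the dense open $U\setminus P$, we conclude $f=R$ on $U$. The main obstacle will be the complex Bochnak--Siciak step used for local rationality, namely passing from separate rationality on coordinate lines to global rationality on a polydisc; with that classical ingredient in place, the remaining steps are essentially the complex translations of the arguments already developed in Sections~\ref{sec-2} and~\ref{sec-4}.
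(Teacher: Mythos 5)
There is a genuine gap in your local step, and it is exactly the point the paper has to work around. You claim that $F\coloneqq f\circ\psi^{-1}$ is ``a rational function with no poles'' on each axis-parallel disc $I\subset\DB^d$, and then invoke a separate-rationality theorem (your ``complex Bochnak--Siciak''; the relevant classical statement is Bochner--Martin \cite{bib8}). But the hypothesis of that theorem is not available: the chart $\psi$ is only a Nash isomorphism, not a birational one. Concretely, if $A=\psi^{-1}(I)$ and $C$ is the Zariski closure of $A$, then $\varphi|_{C}$ maps $C$ onto the affine line containing $I$, possibly with degree $m>1$, and $A$ is just one branch of this covering; the section $\psi^{-1}|_I\colon I\to A$ is then Nash but not rational, so $F|_I=(f|_A)\circ(\psi^{-1}|_I)$ is only a Nash (algebraic holomorphic) function of the coordinate $z_j$, not a rational one. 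Thus ``separately rational $\Rightarrow$ rational'' cannot be applied, and your step~2 does not repair this, because as written it starts from an open set $W$ on which $f$ is \emph{already} assumed to be the restriction of a rational function, so it adds nothing.

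The correct route (and the one the paper takes) is to weaken the local claim: $F$ is separately Nash, Hartogs gives that $F$ is holomorphic on $\DB^d$, and then the Nash version of Bochner--Martin \cite[p.~202, Theorem~6]{bib8} gives that $F$, hence $f|_{U_x}$, is a Nash function. Rationality is then obtained by the Bertini argument of Proposition~\ref{prop-2-5} applied to the algebraic hypersurface in $X\times\AB^1$ containing the graph of $f|_{U_x}$: a projection of degree $m>1$ would produce an irreducible curve on which $f$ restricted to a smooth algebraic arc fails to be regular. Once $f$ is known to be rational and holomorphic near each point of the smooth set $U$, regularity follows (your final step, including the observation that the polar set has pure codimension one on a smooth variety and is therefore empty when it has codimension $\geq 2$, is fine). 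So your overall architecture matches the paper's, but the key local ingredient must be the Nash statement plus Bertini, not a separate-rationality theorem whose hypotheses your chart does not deliver.
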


As in the real case, it is convenient to make use of Nash functions.
Recall that in the complex setting Nash maps can be defined as follows
\cite{bibA3}. Let $X$, $Y$ be complex algebraic varieties and let ${U
\subset X(\CB)}$, ${V \subset Y(\CB)}$ be open subsets. A map $\psi \colon
U \to V$ is said to be a \emph{Nash map} if it is holomorphic and each
point $x \in U$ has an open neighborhood $U_x \subset U$ such that the
graph of $f|_{U_x}$ is contained in a complex algebraic subvariety of $X
\times Y$ of dimension equal to $\dim U_x$ (this subvariety can be
chosen irreducible if $x$ is a smooth point of $X$ and $U_x$ is a
connected smooth open neighborhood of $x$). The composition of Nash maps
is a Nash map. In case $V = \CB$, we obtain \emph{Nash functions} (also
called \emph{algebraic functions}).

\begin{proof}[Proof of Theorem~\ref{th-9-1}]
We may assume that $X$ is irreducible and smooth. The case $\dim X \leq
1$ is obvious. Suppose that $d \coloneqq \dim X \geq 2$.
For each point ${x \in U}$, we can find a Zariski open neighborhood ${X_x
\subset X}$, a morphism
${\varphi_x \colon X_x \to \AB^d}$ and an open
neighborhood $U_x \subset U$ such that ${U_x \subset X_x}$,
${\varphi_x(U_x) = \DB^d \subset \CB^d}$ and the restriction $\psi_x
\colon U_x \to \DB^d$ of $\varphi_x$ is a Nash isomorphism. Then $f
\circ \psi_x^{-1} \colon \
\DB^d \to \CB$ is a function of $d$ complex variables $(z_1, \ldots,
z_d)$, which has the following property: for $j=1, \ldots, d$ and every
point $(a_1, \ldots, a_d) \in \DB^d$, the assignment
\begin{equation*}
\DB \to \CB, \quad z_j \mapsto (f \circ \psi_x^{-1}) (a_1, \ldots,
a_{j-1}, z_j, a_{j+1}, \ldots, a_d)
\end{equation*}
is a Nash function on $\DB$. By Hartogs' theorem, $f \circ \psi_x^{-1}$
is a holomorphic function on $\DB^d$. Furthermore, according to
\cite[p.~202, Theorem~6]{bib8}, $f \circ \psi_x^{-1}$ is a Nash function
on $\DB^d$. Consequently, $f|_{U_x}$ is a Nash function. Now, the
argument used in the proof of Proposition~\ref{prop-2-5} shows that
$f|_{U_x}$ is a rational function. Thus, $f|_{U_x}$ is a regular
function, being rational and holomorphic. It easily follows that $f$ is
a regular function.
\end{proof}

Let $X = X_1 \times \cdots \times X_n$ be the product of complex
algebraic varieties and let $\pi_i \colon X \to X_i$ be the canonical
projection. We say that a subset $K \subset X(\CB)$ is \emph{parallel to
the $i$th factors of $X$} if $\pi_j(K)$ consists of one point for each $j
\neq i$.

As a counterpart of Theorem~\ref{th-7-1}, we get the following.

\begin{theorem}\label{th-9-2}
Let $X = X_1 \times \cdots \times X_n$ be the product of complex
algebraic varieties and let $f \colon U \to \CB$ be a function defined
on a connected smooth open subset $U \subset X(\CB)$. Assume that the
restriction of $f$ is regular on each smooth algebraic arc contained in
$U$ and parallel to one of the factors of $X$. Then $f$ is a regular
function.
\end{theorem}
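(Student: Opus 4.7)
The plan is to mimic the proof of Theorem~\ref{th-9-1}, but with the Nash parametrization chosen to respect the product decomposition of $X$. After passing to irreducible components one may assume each $X_i$ is irreducible; set $d_i:=\dim X_i$ and $d:=d_1+\cdots+d_n$. Near an arbitrary point $x=(x_1,\dots,x_n)\in U$, smoothness of each $X_i$ at $x_i$ allows one to choose algebraic morphisms $X_i\supset X_{i,x}\to \AB^{d_i}$ whose restrictions give Nash isomorphisms $\psi_i\colon U_i\to \DB^{d_i}$ from connected open neighborhoods $U_i\subset X_i(\CB)$ of $x_i$, arranged so that $U_1\times\cdots\times U_n\subset U$. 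Taking the product $\psi:=\psi_1\times\cdots\times\psi_n$ yields a Nash isomorphism from $U_1\times\cdots\times U_n$ onto $\DB^d$.

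Set $g:=f\circ\psi^{-1}\colon\DB^d\to\CB$ and group the coordinates of $\DB^d$ into $n$ blocks of sizes $d_1,\dots,d_n$. The key observation is that every standard coordinate line of $\DB^d$ lies entirely in some block, say the $i$-th; its $\psi^{-1}$-image is a Nash arc in $U_1\times\cdots\times U_n$ along which the projection to every factor $X_j$ with $j\neq i$ is constant, so it sits on an irreducible algebraic curve in $X$ parallel to the $i$-th factor and constitutes a smooth algebraic arc in $X(\CB)$ parallel to that factor. By the hypothesis of the theorem $f$ restricts to a regular function on this arc, so $g$ restricts to a Nash function on every standard coordinate line of $\DB^d$.

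From this point the proof of Theorem~\ref{th-9-1} applies word for word: Hartogs' theorem upgrades $g$ to a holomorphic function on $\DB^d$, the Bochnak--Siciak type result \cite[p.~202, Theorem~6]{bib8} upgrades $g$ further to a Nash function, and the Bertini argument used in Proposition~\ref{prop-2-5} (in the complex incarnation used for Theorem~\ref{th-9-1}) shows that $g$ is rational. Thus $f|_{U_1\times\cdots\times U_n}$ is rational and holomorphic on a smooth variety, hence regular; since $x\in U$ was arbitrary, $f$ is regular on $U$.

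The main (and only mildly nontrivial) step is the bookkeeping with product Nash parametrizations: one must verify that the ambient Nash coordinates can be aligned with the factorization of $X$, so that the standard coordinate lines of $\DB^d$ correspond precisely to the smooth algebraic arcs in $X(\CB)$ parallel to the factors whose regularity is hypothesized. Everything else reduces to the template already in place for Theorem~\ref{th-9-1}.
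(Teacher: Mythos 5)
The first half of your argument is fine and runs parallel to the paper's proof of Theorem~\ref{th-9-1}: with a product-adapted chart $\psi=\psi_1\times\cdots\times\psi_n$ the coordinate lines of $\DB^d$ do map to smooth algebraic arcs parallel to the factors, so $g=f\circ\psi^{-1}$ is Nash on every coordinate line, Hartogs gives holomorphy, and \cite[p.~202, Theorem~6]{bib8} makes $g$ a Nash function on $\DB^d$. The gap is the final step, ``the Bertini argument used in Proposition~\ref{prop-2-5} shows that $g$ is rational.'' That argument reaches its contradiction by producing, via Bertini, the curve $X\cap L$ for a \emph{generic} linear space $L$ and then invoking the hypothesis that $f$ is regular on some smooth algebraic arc $A$ lying on that curve; in Theorem~\ref{th-9-1} (and in Proposition~\ref{prop-2-5}) this is legitimate because regularity is assumed on \emph{all} smooth algebraic arcs. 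Under the hypotheses of Theorem~\ref{th-9-2} you only control $f$ on arcs parallel to the factors, and a generic Bertini section of a product is not parallel to any factor; nor can you force $L$ to cut out a slice curve, since Bertini's irreducibility/degree statement needs $L$ generic, while over a special slice $X_1\times\{x_2\}\times\cdots\times\{x_n\}$ the hypersurface $Y\supset\Gamma(g)$ may become reducible, with a component birational to the slice carrying the graph of the restriction, so no contradiction with $\deg(\pi_1)>1$ arises. What your argument actually establishes is therefore only that $f$ is a Nash function near every point, which is strictly weaker than regular: a Nash function need not be rational, and Nash-ness does not retroactively give you regularity on the arbitrary arcs the Bertini step needs.

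This is precisely where the paper proceeds differently: it argues by induction on $n$, uses Theorem~\ref{th-9-1} on slices parallel to the last factor and the induction hypothesis on slices parallel to the remaining factors to get that $f$ is \emph{separately} regular in the two groups of variables, and then concludes with Proposition~\ref{prop-9-3}, the complex Bochner--Martin statement (the analogue of Lemma~\ref{lem-7-8}) that a holomorphic function on a product, regular in each group of variables separately, is regular. Your proof needs some ingredient of this ``separately regular $\Rightarrow$ regular'' type; the Nash property you derived cannot substitute for it.
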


\begin{proof}
We use induction on $n$. For $n=1$, Theorem~\ref{th-9-2} coincides with
Theorem~\ref{th-9-1}.

Suppose that $n \geq 2$. By Hartogs' theorem, $f$ is a holomorphic
function. Clearly, each point $x \in U$ has a neighborhood in $U$ of the
form $U_1 \times \cdots \times U_n$, where $U_i \subset X_i(\CB)$ is a
connected smooth open subset that is contained in an affine open subset
of $X_i$. Setting $V \coloneqq U_1 \times \cdots \times U_{n-1}$, $W
\coloneqq U_n$, using the induction hypothesis and applying
Proposition~\ref{prop-9-3} below, we conclude that $f|_{V \times W}$ is
a regular function. Now it easily follows that $f$ is a regular
function.
\end{proof}

\begin{proposition}\label{prop-9-3}
Let $X$, $Y$ be affine complex algebraic varieties and let $V \subset
X(\CB)$, $W \subset Y(\CB)$ be connected smooth open subsets. Let $f
\colon V \times W \to \CB$ be a holomorphic function with the following
two properties:
\begin{nconditions}
\item\label{prop-9-3-1} for each point $x \in V$ the function
%
$W \to \CB, \quad y \mapsto f(x,y)$
%
is regular;
\item\label{prop-9-3-2} for each point $y \in W$ the function
%
$
V \to \CB, \quad x \mapsto f(x,y)
$
%
is regular.
\end{nconditions}
Then the function $f$ is regular.
\end{proposition}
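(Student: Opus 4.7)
The plan is to mimic the proof of Lemma~\ref{lem-7-8}, then use that $f$ is already holomorphic on $V \times W$ to upgrade the resulting rational description to a regular one. Throughout I assume (without loss of generality, since $V$ and $W$ are connected) that $X$ and $Y$ are irreducible.

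I begin by fixing a $\CB$-vector space basis $\{P_k\}$ of the finitely generated algebra $\OC(X)$. For each $y \in W$, condition~(\ref{prop-9-3-2}) gives that $x \mapsto f(x,y)$ is regular, hence rational on the irreducible $X$; writing a defining relation in terms of the basis yields, for every $y$, coefficients $\varphi_i(y)$, $\psi_j(y)$ (with $m, n$ depending on $y$), not all zero, such that
\[
\Bigl(\sum_{i=1}^m \varphi_i(y) P_i(x)\Bigr) f(x,y) + \sum_{j=1}^n \psi_j(y) P_j(x) = 0 \quad \text{for all } x \in V.
\]
Applying Baire's theorem to the countable union $W = \bigcup_{m,n} W(m,n)$ exactly as in the proof of Lemma~\ref{lem-7-8}, I obtain a fixed pair $(m,n)$ and a nonempty open $W_0 \subset W$ on which the relation is realizable with coefficients defined on a dense subset $\Omega \subset W_0$.

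Next I would establish a complex analog of Lemma~\ref{lem-7-7} for the functions $f_i(x,y) = P_i(x) f(x,y)$ and $f_{m+j}(x,y) = P_j(x)$. The determinantal construction of that proof goes through verbatim, with the real nondegeneracy condition $\sum \varphi_i(y)^2 > 0$ replaced by $\sum |\varphi_i(y)|^2 > 0$, which is what is actually used to force vanishing of the relevant determinant. The minors that arise, viewed as functions of $y$ with the other arguments fixed, are polynomial combinations of values $f(a_k, y)$ and constants $P_l(a_k)$; by condition~(\ref{prop-9-3-1}), these minors are themselves regular functions on $W$, and since $Y$ is affine irreducible they extend to ratios of elements of $\OC(Y)$. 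Clearing denominators produces regular functions $\Phi_i, \Psi_j$ on $Y$, not all identically zero on $W_0$, satisfying the same relation for all $(x,y) \in V_0 \times W_0$, where $V_0 \subset V$ is a nonempty open subset on which the $P_k$ are simultaneously nonzero. This shows that $f|_{V_0 \times W_0}$ coincides with a rational function $R$ on $X \times Y$.

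To finish, I would invoke holomorphy: since $V \times W$ is connected and $f$ is holomorphic there, agreement with $R$ on the nonempty open subset $V_0 \times W_0$ propagates by analytic continuation to $f = R$ on $(V \times W) \setminus \Pole(R)$. Because $V \times W$ is a smooth open subset of the product variety, being simultaneously holomorphic and rational forces $f$ to be regular, by the observation recorded at the beginning of Section~\ref{sec-8}. The main obstacle is the complex adaptation of Lemma~\ref{lem-7-7}: one must check that the determinantal argument survives the move from $\R$ to $\CB$, and that the resulting minors are genuinely regular on $W$ rather than merely holomorphic---which here is immediate, because by hypothesis, for each fixed $a_k$ the map $y \mapsto f(a_k, y)$ is already regular.
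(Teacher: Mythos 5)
Your proposal is correct and is essentially the argument the paper intends: its own proof of Proposition~\ref{prop-9-3} is just a citation of \cite[p.~201, Theorem~6]{bib8} plus the remark that the statement corresponds to Lemma~\ref{lem-7-8} and is easier because $f$ is holomorphic, which is exactly the Lemma~\ref{lem-7-7}/\ref{lem-7-8}-style determinantal argument, with identity-theorem propagation replacing Lemma~\ref{lem-7-6}, and the final ``rational $+$ holomorphic on a smooth open set $\Rightarrow$ regular'' step (Riemann extension/normality, as noted in Section~\ref{sec-9}) that you spell out. The only detail to add is that Proposition~\ref{prop-9-3} has no analogue of the non-vanishing hypotheses of Lemmas~\ref{lem-7-7} and~\ref{lem-7-8} (some $f(x,\cdot)$ could vanish identically), but this is harmless: either $f \equiv 0$ on $V \times W$ by the identity theorem, or one shrinks $V_0$ away from the closed, nowhere dense set $\{x \in V : f(x,\cdot) \equiv 0\}$ before running the determinant argument.
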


\begin{proof}
This is a straightforward generalization of \cite[p.~201,
Thoerem~6]{bib8}. Of course, Proposition~\ref{prop-9-3} corresponds to
Lemma~\ref{lem-7-8}, but the proof in the complex setting is easier
because $f$ is holomorphic.
\end{proof}

Theorem~\ref{th-9-2} implies the following.

\begin{corollary}\label{cor-9-4}
Let $X = X_1 \times \cdots \times X_n$ be the product of complex
algebraic varieties, $X^0 \subset X$ a Zariski open subset, and $f \colon
X^0(\CB) \to \CB$ a function whose restriction is regular on each smooth
algebraic arc contained in $X^0(\CB)$ and parallel to one of the factors
of $X$. Then $f$ is a rational function.
\end{corollary}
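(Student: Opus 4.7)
The plan is to derive this from Theorem~\ref{th-9-2} by a local-to-global argument, patching together the rational representations furnished by Theorem~\ref{th-9-2} on product neighborhoods via the identity principle.

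First I would reduce to the case that $X$ is irreducible. Since $X = X_1 \times \cdots \times X_n$ is a finite union of products of irreducible components of the $X_i$, and rationality can be verified on each irreducible component of $X$ separately, this reduction is harmless.

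Next, fix a point $x_0 \in X^0(\CB) \cap (X \setminus \Sing(X))(\CB)$ and choose a product neighborhood $U = U_1 \times \cdots \times U_n \subset X^0(\CB) \cap (X \setminus \Sing(X))(\CB)$ of $x_0$, with each $U_i$ a connected smooth open subset of $X_i(\CB)$. Any smooth algebraic arc in $U$ parallel to a factor of $X$ is also such an arc in $X^0(\CB)$, so the hypothesis of Theorem~\ref{th-9-2} is satisfied and $f|_U$ is a regular function. Consequently there exist a Zariski open subset $X' \subset X$ meeting $U$ and a regular function $\Phi$ on $X'$ with $f = \Phi$ on $U \cap X'$; let $R$ be the rational function on $X$ represented by $\Phi$. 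It remains to show that $f = R$ on a Zariski open dense subset of $X^0$.

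For any $p \in X^0(\CB) \cap (X \setminus \Sing(X))(\CB)$, applying Theorem~\ref{th-9-2} to a product neighborhood of $p$ produces a rational function $R_p$ on $X$, regular near $p$, with $f = R_p$ in some Euclidean open neighborhood of $p$. The main obstacle is to verify that $R_p = R$ as rational functions on $X$. For this I would use that $X^0(\CB) \cap (X \setminus \Sing(X))(\CB)$ is connected in the Euclidean topology (since $X$ is irreducible, its smooth locus is a connected complex manifold and removing the proper subvariety $X \setminus X^0$ preserves connectedness), together with the identity principle that two rational functions on an irreducible variety agreeing on a nonempty Euclidean open set must coincide. Chaining product neighborhoods along a continuous path from $x_0$ to $p$ and invoking the identity principle at each overlap gives $R_p = R$. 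Once this is done, $R$ is regular at every point of $X^0(\CB) \cap (X \setminus \Sing(X))(\CB)$, so $\Pole(R)$ is disjoint from the Zariski open dense subset $X^0 \cap (X \setminus \Sing(X))$ of $X$, and $f = R$ there, showing that $f$ is rational.
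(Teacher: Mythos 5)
Your proof is correct, but it takes a more laborious route than the paper's, and the extra work comes from using Theorem~\ref{th-9-2} only on product-shaped neighborhoods. The hypothesis of Theorem~\ref{th-9-2} merely asks that $U$ be a \emph{connected smooth open} subset of $X(\CB)$ (the product structure of $X$ enters only through which arcs are tested, and that hypothesis is inherited by every open subset of $X^0(\CB)$), so after the reduction to $X$ irreducible the paper applies the theorem once, to the whole set $X^0(\CB)\setminus\Sing(X)$, which is a connected smooth open subset of $X(\CB)$. The advantage of the global application is that each local regular representation $\Phi_p$ then agrees with $f$ on $\bigl(X^0(\CB)\setminus\Sing(X)\bigr)\cap X_p$ for a Zariski open $X_p$, i.e.\ on a set whose complement lies in a proper subvariety; any two such representations therefore overlap on a nonempty Euclidean open set and coincide as rational functions by the identity principle, with no need to connect distant points. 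In your version the theorem is invoked only on small product neighborhoods, so the agreement region attached to a point $p$ is just a small Euclidean neighborhood, and for distant points these regions can be disjoint; that is exactly why you are forced into the path-covering and chaining argument (which is sound: the agreement sets are Euclidean open and dense in each neighborhood, and the connectedness of the smooth locus of an irreducible variety minus a proper subvariety is the same fact the paper uses). In short, your argument re-proves at the level of the corollary a propagation step that the full-strength statement of Theorem~\ref{th-9-2} renders unnecessary; the paper's proof is the one-line observation that $X^0(\CB)\setminus\Sing(X)$ is connected, smooth and open, followed by a direct application of the theorem, after which $\Pole(R)$ is disjoint from the Zariski dense open set $X^0\setminus\Sing(X)$ and $f=R$ there, exactly as in your concluding step.
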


\begin{proof}
We may assume that $X$ is irreducible, in which case $X^0(\CB) \setminus
\Sing(X)$ is a connected open subset of $X(\CB)$. Now it suffices to
apply Theorem~\ref{th-9-2}.
\end{proof}

\subsection{Continuous complex rational functions}
Next we prove continuity of arc-rational functions.

\begin{theorem}\label{th-9-5}
Let $X$ be a complex algebraic variety and let $f \colon W \to \CB$ be
an arc-rational function defined on an open subset $W \subset X(\CB)$.
Then $f$ is continuous.
\end{theorem}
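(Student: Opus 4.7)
The plan is to argue by induction on $d \coloneqq \dim X$. Since continuity is a local property, fix $x_0 \in W$, replace $X$ by an affine open neighborhood of $x_0$ and $W$ by a connected open neighborhood of $x_0$, and assume $X$ is irreducible. The base case $d = 0$ is trivial. For the inductive step, the restriction $f|_{W \cap \Sing(X)}$ is again arc-rational on the lower-dimensional variety $\Sing(X)$, so by the induction hypothesis it is continuous. On the complex smooth part $W \setminus \Sing(X)$, which is connected because $\Sing(X)$ has real codimension at least $2$, arc-rationality and the Riemann extension theorem (continuous rational on a smooth complex curve is automatically regular) together imply that $f$ is regular on smooth algebraic arcs. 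Theorem~\ref{th-9-1} then yields a rational function $R$ on $X$ such that $f = R$ on $W \setminus \Sing(X)$.

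Suppose for contradiction that $f$ is not continuous at $x_0$. After extracting a subsequence, there is a sequence $x_n \to x_0$ in $W$ with $f(x_n) \to l \in \PB^1(\CB)$ and $l \neq f(x_0)$; the induction hypothesis applied on $\Sing(X)$ rules out the case of infinitely many $x_n$ in $\Sing(X)$, so we may assume all $x_n$ lie in $W \setminus \Sing(X)$. Then $R(x_n) \to l$, so the point $(x_0, l)$ belongs to the Zariski closure $\tilde{\Gamma} \subset X \times \PB^1$ of the graph of $R$, which is an irreducible algebraic variety of dimension $d$. Since the fiber of $\tilde{\Gamma}$ over $x_0$ has dimension at most $1$, by cutting $\tilde{\Gamma}$ with a generic system of hyperplane sections through $(x_0, l)$ we produce an irreducible algebraic curve $D \subset \tilde{\Gamma}$ containing $(x_0, l)$ whose projection $C \coloneqq \pi_1(D)$ is an irreducible algebraic curve in $X$ passing through $x_0$; by a further generic choice we also arrange $C$ not to lie in the indeterminacy locus of $R$.

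By arc-rationality, some open neighborhood $U_{x_0} \subset W$ makes $f|_{U_{x_0} \cap C}$ continuous rational, so $f|_{W \cap C}$ is continuous at $x_0$ and agrees with the rational function $R|_C$ on a Zariski dense open subset. Hence $D$, being the unique horizontal component of $\tilde{\Gamma} \cap (C \times \PB^1)$ (as $R$ is single-valued), coincides with the Zariski closure of the graph of $R|_C$. Passing to the normalization $\nu \colon \tilde{C} \to C$, the morphism $\tilde{C} \to \PB^1$ induced by $R|_C$ sends every preimage of $x_0$ under $\nu$ to $f(x_0)$ by continuity, so the fiber of $D$ over $x_0$ consists of the single point $(x_0, f(x_0))$. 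This forces $l = f(x_0)$, a contradiction. The principal obstacle is the construction of the curve $D$ with its projection $C$ behaving well (not collapsed to a point and not lying in the indeterminacy locus of $R$); this rests on dimension counting inside $\tilde{\Gamma}$ and a Bertini-style genericity argument.
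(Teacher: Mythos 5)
Your overall strategy (reduce to a rational $R$ via Theorem~\ref{th-9-1}, view a hypothetical bad limit value $l$ as a point of the graph closure $\tilde\Gamma$, and pin it down by restricting to an algebraic curve through $(x_0,l)$) differs from the paper, which instead observes that $\Gamma(f|_{W\setminus\Sing(X)})$ is semialgebraic and applies the Nash curve selection lemma to produce an arc \emph{inside the graph of $f$}. That difference is exactly where your argument has a genuine gap. First, the claim that $W\setminus\Sing(X)$ is connected ``because $\Sing(X)$ has real codimension at least $2$'' is false: codimension-two removal preserves connectedness on manifolds, but near a non-unibranch point $W$ is not a manifold. For instance, for the irreducible surface $X=(z^2-x^2(x+1)=0)\subset\AB^3$ and $x_0$ on the line $x=z=0$, a small neighborhood $W$ of $x_0$ has $W\setminus\Sing(X)$ disconnected (two local sheets). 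So Theorem~\ref{th-9-1} only gives you, on each connected component $V$ of $W\setminus\Sing(X)$, an equality $f=R_V$ with possibly different rational functions $R_V$.

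This is not merely a bookkeeping issue, because it breaks the final contradiction. After passing to the component $V$ containing (a subsequence of) the $x_n$ and setting $R\coloneqq R_V$, your curve $D\subset\tilde\Gamma$ through $(x_0,l)$ controls the limiting values of $R$ along $C=\pi_1(D)$, but the step ``every preimage of $x_0$ in $\tilde C$ maps to $f(x_0)$ by continuity'' needs $f=R$ at points of $C(\CB)$ accumulating at $x_0$ along \emph{each} branch of $C$ at $x_0$. Nothing in the Bertini construction forces the branches of $C$ at $x_0$ to approach $x_0$ through the local sheet carrying $V$; they may lie in another local component of $X(\CB)$, where $f$ agrees with a different $R_{V'}$, and then arc-rationality of $f$ along $C$ constrains $R_{V'}|_C$, not $R|_C$, so $l=f(x_0)$ does not follow. (Your argument is fine at normal, or more generally unibranch, points; but the theorem is interesting precisely at non-normal points, cf.\ Example~\ref{ex-9-1}.) To repair it you would have to select a curve in $\tilde\Gamma$ whose germ at $(x_0,l)$ lies in the closure of $\Gamma(R|_V)$ — which is in essence the semialgebraic curve selection the paper performs. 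A smaller point: a \emph{generic} hyperplane section through an assigned point of an irreducible variety need not be irreducible (hyperplanes through the vertex of a cone cut out unions of lines), so even the existence of $D$ should be argued via a component of the section or via the standard lemma producing an irreducible curve through a given point not contained in a prescribed proper closed subset.
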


\begin{proof}
We begin with a general remark. Identifying $\CB$ with $\R^2$, one can
consider semialgebraic subsets of $Y(\CB)$ for any complex algebraic
variety $Y$.

Since continuity is a local property, it suffices to consider $W$ open
and semialgebraic. Then $W_0 \coloneqq W \setminus \Sing(X)$ is also
semialgebraic. Furthermore, we may assume that $X$ is irreducible and
$\dim X \geq 1$. We now prove continuity of $f$ at an arbitrary point
$x_0 \in W$.

We fix an embedding $\CB \subset \PB^1(\CB)$ and regard $\Gamma(f)$, the
graph of $f$, as a subset of $X(\CB) \times \PB^1(\CB)$. Let $l \in
\PB^1(\CB)$ be any point such that $(x_0, l)$ belongs to the closure of
$\Gamma(f)$. It remains to prove that $f(x_0) = l$.

We claim that $\Gamma(f|_{W_0})$ is dense in $\Gamma(f)$. Indeed, for
any point $x \in W$ there exists an irreducible complex algebraic curve
$B \subset X$ with $x \in B$ and $W_0 \cap B \neq \varnothing$. Then $x$
belongs to the closure of $W_0 \cap B$ in $W \cap B$. Since $f$ is
arc-rational, the restriction $f|_{W \cap B}$ is a continuous function.
Hence $(x ,f(x))$ belongs to the closure of $\Gamma(f|_{W_0})$ in
$\Gamma(f)$, which proves the claim.

According to the claim, $(x_0, l)$ belongs to the closure of
$\Gamma(f|_{W_0})$. It follows from Theorem~\ref{th-9-1} that
$\Gamma(f|_{W_0})$ is a semialgebraic subset of $X(\CB) \times
\PB^1(\CB)$. Thus, by the Nash curve selection lemma
\cite[Proposition~8.1.13]{bib4}, there exists a Nash arc $\varphi =
(\gamma, \psi) \colon (-1,1) \to X(\CB) \times \PB^1(\CB)$ with
\begin{equation*}
\varphi(0) = (\gamma(0), \psi(0)) = (x_0, l) \quad \textrm{and} \quad
\varphi((0,1)) \subset \Gamma(f|_{W_0}).
\end{equation*}
In particular,
\begin{equation*}
\psi(t) = f(\gamma(t)) \quad \textrm{for} \ t \in (0,1).
\end{equation*}
Let $C \subset X$ be the Zariski closure of the semialgebraic set
$\gamma((-1,1))$. Then, either $C = \{ x_0 \}$ or $C$ is an irreducible
complex algebraic curve with $x_0 \in C(\CB)$. Since $f$ is
arc-rational, $f|_{W \cap C}$ is continuous. Consequently, the function
$f \circ \gamma$, which is well defined on $[0,1)$, is continuous at
$0$, hence
\begin{equation*}
\lim_{t \to 0^+} f(\gamma(t)) = f(x_0).
\end{equation*}
On the other hand,
\begin{equation*}
\lim_{t \to 0} \psi(t) = l.
\end{equation*}
It follows that $f(x_0) = l$, as required.
\end{proof}

We can now characterize continuous rational functions.

\begin{theorem}\label{th-9-6}
Let $X$ be a complex algebraic variety. For a function $f \colon X(\CB)
\to \CB$, the following conditions are equivalent:
\begin{conditions}
\item\label{th-9-6-a} $f$ is continuous  rational.

\item\label{th-9-6-b} $f$ is arc-rational.
\end{conditions}
\end{theorem}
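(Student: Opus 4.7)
My plan is to handle the two implications of Theorem~\ref{th-9-6} separately. The implication \ref{th-9-6-b}$\Rightarrow$\ref{th-9-6-a} will essentially assemble results already established in this section, while \ref{th-9-6-a}$\Rightarrow$\ref{th-9-6-b} will require a normalization argument to deal with irreducible curves that lie in the singular locus of $X$.

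For \ref{th-9-6-b}$\Rightarrow$\ref{th-9-6-a}, I would first apply Theorem~\ref{th-9-5} to obtain continuity of $f$. Next I would verify that $f$ is regular on smooth algebraic arcs in the sense of the definition given in Section~\ref{sec-9}: if $A \subset C(\CB) \setminus \Sing(C)$ is such an arc and $x \in A$, then arc-rationality yields a neighborhood $U_x$ with $f|_{U_x \cap C}$ continuous rational, and its restriction to the smooth piece $U_x \cap A$ is then a continuous rational function on a smooth open subset of a complex curve, which by the Riemann extension theorem is holomorphic and therefore regular. I would then apply Theorem~\ref{th-9-1} on each connected component of the smooth open subset $X(\CB) \setminus \Sing(X)$ to conclude that $f$ is regular there. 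Since $X \setminus \Sing(X)$ is Zariski open and dense in $X$, the resulting regular function extends to a rational function $R$ on $X$ with $f = R$ on $(X \setminus \Sing(X))(\CB) \subset X \setminus \Pole(R)$, which gives the required rationality.

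For \ref{th-9-6-a}$\Rightarrow$\ref{th-9-6-b}, I fix $x \in X(\CB)$ and an irreducible complex algebraic curve $C \subset X$ through $x$; continuity of $f|_{C}$ is automatic, so the task is to produce a rational function on $C$ that agrees with $f|_C$ on a Zariski dense open subset. The plan is to invoke the normalization $\nu \colon \tilde X \to X$ and set $\tilde f \coloneqq f \circ \nu$, which is continuous on $\tilde X(\CB)$ and rational on $\tilde X$ because $\nu$ is birational; since $\tilde X$ is normal, the standard fact that a locally bounded rational function on a normal complex variety is regular forces $\tilde f$ to be regular on $\tilde X$. Then I would consider the graph $\Gamma(\tilde f|_{\nu^{-1}(C)}) \subset \tilde X \times \AB^1$, a closed algebraic subscheme, and take its scheme-theoretic image under the finite morphism $(\nu|_{\nu^{-1}(C)}, \mathrm{id})$, which is a closed algebraic subvariety $Z \subset C \times \AB^1$ coinciding set-theoretically with the graph of $f|_C$. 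Arguing that $Z^{\mathrm{red}}$ is irreducible (multiple components would force $Z$ to have more than one point over a generic point of $C$, contradicting the graph description), the projection $Z^{\mathrm{red}} \to C$ is finite and set-theoretically bijective, hence birational; composing its inverse (defined over a Zariski dense open of $C$) with the projection to $\AB^1$ then produces a rational function on $C$ which agrees with $f|_C$ generically, as required.

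The hard part will be this last step of \ref{th-9-6-a}$\Rightarrow$\ref{th-9-6-b} in the case when $C$ is contained in the singular locus of $X$, where the original rational representation $R$ of $f$ may satisfy $\Pole(R) \supset C$ so that no direct restriction $R|_C$ is available. The normalization argument above is designed precisely to circumvent this difficulty, since $\tilde f$ is regular on the normal variety $\tilde X$ and hence restricts to a regular function on $\nu^{-1}(C)$, which then descends via the finite map $\nu^{-1}(C) \to C$ to produce the required rational function on $C$.
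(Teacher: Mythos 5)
Your proof is correct, and the direction (\ref{th-9-6-b})$\Rightarrow$(\ref{th-9-6-a}) is essentially the paper's argument: continuity from Theorem~\ref{th-9-5}, the Riemann-extension observation that arc-rationality gives regularity on smooth algebraic arcs, and then Theorem~\ref{th-9-1} applied to the (connected, componentwise) smooth locus $X(\CB)\setminus\Sing(X)$, which is Zariski dense, to get rationality. Where you genuinely diverge is in (\ref{th-9-6-a})$\Rightarrow$(\ref{th-9-6-b}). The paper does this via Proposition~\ref{prop-9-7}: a continuous rational $f$ on $X(\CB)$ has $\Gamma(f)=(X(\CB)\times\CB)\cap\Gamma$ with $\Gamma$ the Zariski closure of the graph, and intersecting with $Z\times\AB^1$ immediately gives that $f|_{Z(\CB)}$ is rational for \emph{every} subvariety $Z$, curves included; this is shorter and delivers the stronger hereditary statement in one stroke. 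You instead pass to the normalization $\nu\colon\tilde X\to X$, use the fact (stated in the paper before Example~\ref{ex-9-1}) that a continuous rational function on a normal variety is regular to make $\tilde f=f\circ\nu$ a morphism, and then push the graph of $\tilde f|_{\nu^{-1}(C)}$ forward along the finite map to $C\times\AB^1$; since the image is set-theoretically the graph of $f|_C$, its reduction is an irreducible curve mapping finitely and bijectively, hence (in characteristic zero) birationally, onto $C$, producing the required rational function. This is a perfectly sound, more hands-on geometric route; its small extra costs are the facts you quietly use (surjectivity and finiteness of $\nu$, that bijective on closed points implies birational, and, in the irreducibility step, that possible zero-dimensional pieces of $\nu^{-1}(C)$ or of the image are absorbed set-theoretically by the one-dimensional component surjecting onto $C$), all routine but worth a sentence each; what the paper's graph-closure argument buys is brevity and the full hereditary conclusion for arbitrary $Z$, while your normalization argument would need to be repeated (though it works verbatim) to reach that generality.
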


\begin{proof}
It follows from Proposition~\ref{prop-9-7} below that (\ref{th-9-6-a})
implies (\ref{th-9-6-b}).

Suppose that (\ref{th-9-6-b}) holds. According to Theorem~\ref{th-9-5},
$f$ is continuous. If $X$ is irreducible, then the set $X(\CB)
\setminus \Sing(X)$ is connected, hence $f$ is rational by
Theorem~\ref{th-9-1}. Consequently, $f$ is also rational if $X$ is
reducible. Thus, (\ref{th-9-6-b}) implies (\ref{th-9-6-a}).
\end{proof}

\begin{proposition}\label{prop-9-7}
Let $X$ be a complex algebraic variety. For a function $f \colon X(\CB)
\to \CB$, the following conditions are equivalent:
\begin{conditions}
\item\label{prop-9-7-a} $f$ is continuous rational.
\item\label{prop-9-7-b} $f$ is continuous and $\Gamma(f) = (X(\CB)
\times \CB) \cap \Gamma$, where $\Gamma(f)$ is the graph of $f$ and
$\Gamma \subset X \times \AB^1$ is the Zariski closure of $\Gamma(f)$.
\end{conditions}
Furthermore, if these conditions hold, then for each algebraic
subvariety $Z \subset X$ the restriction $f|_{Z(\CB)}$ is a rational
function.
\end{proposition}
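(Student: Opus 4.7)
The plan is to handle the two implications in turn: continuity combined with a curve-selection argument will give (\ref{prop-9-7-a})~$\Rightarrow$~(\ref{prop-9-7-b}), while (\ref{prop-9-7-b})~$\Rightarrow$~(\ref{prop-9-7-a}) reduces, after a degree-counting argument, to the observation that $\pi_1 \colon \Gamma \to X$ must be birational onto $X$. The ``Furthermore'' clause then follows by observing that condition~(\ref{prop-9-7-b}) is inherited by every subvariety.

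For (\ref{prop-9-7-a})~$\Rightarrow$~(\ref{prop-9-7-b}), continuity is immediate. I would choose a rational function $R$ on $X$ and a Zariski open dense $X^0 \subset X \setminus \Pole(R)$ with $f|_{X^0(\CB)} = R|_{X^0(\CB)}$. Continuity of $f$ together with Euclidean density of $X^0(\CB)$ in $X(\CB)$ implies that $\Gamma$ coincides with the Zariski closure of the graph $\Gamma(R|_{X^0(\CB)})$; in particular every irreducible component of $\Gamma$ dominates some irreducible component of $X$, and $\Gamma(R|_{X^0(\CB)})$ is a Euclidean dense semialgebraic subset of $\Gamma(\CB)$ (after identifying $\CB$ with $\R^2$). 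Given $(x, c) \in \Gamma \cap (X(\CB) \times \CB)$, the Nash curve selection lemma (used as in the proof of Theorem~\ref{th-9-5}) yields a Nash arc $\varphi = (\gamma, \psi) \colon (-1, 1) \to X(\CB) \times \CB$ with $\varphi(0) = (x, c)$ and $\varphi((0,1)) \subset \Gamma(R|_{X^0(\CB)})$. Then $\psi(t) = R(\gamma(t)) = f(\gamma(t))$ for $t \in (0,1)$, and continuity of $f$ at $x$ forces $c = \lim_{t \to 0^+} \psi(t) = f(x)$.

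For (\ref{prop-9-7-b})~$\Rightarrow$~(\ref{prop-9-7-a}), I would first verify that condition~(\ref{prop-9-7-b}) is inherited by every subvariety $Z \subset X$: if $\Gamma_Z$ denotes the Zariski closure of $\Gamma(f|_{Z(\CB)})$ in $Z \times \AB^1$, then $\Gamma_Z \subset \Gamma \cap (Z \times \AB^1)$, and any $(x, c) \in \Gamma_Z \cap (Z(\CB) \times \CB)$ satisfies $(x,c) \in \Gamma \cap (X(\CB) \times \CB) = \Gamma(f)$, whence $c = f(x)$. This lets me reduce to $X$ irreducible. Then $\pi_1 \colon \Gamma \to X$ must be generically finite: otherwise some fiber $\pi_1^{-1}(x_0)$ over an $x_0 \in X(\CB)$ would be positive-dimensional and contain infinitely many complex points $(x_0, c)$, all forced by (\ref{prop-9-7-b}) to equal $(x_0, f(x_0))$. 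Let $\Gamma_1, \ldots, \Gamma_m$ be the irreducible components of $\Gamma$ dominating~$X$, with $\deg(\pi_1|_{\Gamma_i}) = d_i$. For a general complex point $x \in X(\CB)$, chosen to avoid the images of the non-dominating components, the branch locus of $\pi_1$, and the pairwise intersections $\Gamma_i \cap \Gamma_j$, there are exactly $d_1 + \cdots + d_m$ distinct complex preimages, all lying in $(X(\CB) \times \CB) \cap \Gamma = \Gamma(f)$ and therefore equal to $(x, f(x))$. This forces $m = 1$ and $d_1 = 1$, so $\pi_1 \colon \Gamma_1 \to X$ is birational; composing its rational inverse with the $\AB^1$-projection produces the rational function on $X$ representing $f$. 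The ``Furthermore'' assertion now follows by applying the equivalence (\ref{prop-9-7-a})~$\Leftrightarrow$~(\ref{prop-9-7-b}) to each irreducible component of any subvariety $Z \subset X$, using the inherited graph condition established above.

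The main delicate step is the degree-counting in (\ref{prop-9-7-b})~$\Rightarrow$~(\ref{prop-9-7-a}): one must choose the general point $x$ so that its complex preimages in the various dominating components of $\Gamma$ really are distinct, which requires simultaneously avoiding the branch locus of $\pi_1$, the images of non-dominating components of $\Gamma$, and the pairwise intersections of dominating components. Once the generic \'etale picture is arranged, the conclusion $d_1 + \cdots + d_m = 1$ is immediate, and the remaining ingredients---curve selection on one side and propagation of condition~(\ref{prop-9-7-b}) to subvarieties on the other---are routine.
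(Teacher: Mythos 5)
Your proposal is correct and follows essentially the same route as the paper: Euclidean density of the complex points of a Zariski open dense subset together with continuity yields (\ref{prop-9-7-a})~$\Rightarrow$~(\ref{prop-9-7-b}), generic injectivity of $\pi_1\colon\Gamma\to X$ on complex points (which the paper dismisses as ``clear'') yields (\ref{prop-9-7-b})~$\Rightarrow$~(\ref{prop-9-7-a}), and restricting the graph condition to subvarieties gives the final clause, exactly as in the paper. The only cosmetic difference is your appeal to the Nash curve selection lemma in the first implication, which is superfluous: once $\Gamma(R|_{X^0(\CB)})$ is known to be Euclidean dense in $\Gamma(\CB)$, taking a convergent sequence and using continuity of $f$ already forces $c=f(x)$.
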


\begin{proof}
We will make use of the following familiar fact.
If $Y$ is a complex algebraic variety and $Y_0\subset Y$ is a Zariski open dense subset,
then the  set $Y_0 (\CB)$ is Euclidean dense in   $Y (\CB)$. 

Suppose that (\ref{prop-9-7-a}) holds.
 Then for some Zariski open dense
subset $X^0 \subset X$ the restriction $f|_{X^0(\CB)}$ is a regular
function. If $G \subset X \times \AB^1$ is the Zariski closure of
$\Gamma(f|_{X^0(\CB)})$ and ${G^* \subset X(\CB) \times \CB}$ is the
closure of $\Gamma(f|_{X^0(\CB)})$, then $G^* = (X(\CB) \times \CB) \cap
G$. On the other hand, $G^* = \Gamma(f)$ since $X^0(\CB) \subset
X(\CB)$ is a dense subset and $f$ is continuous. Consequently, $G =
\Gamma$ and ${\Gamma(f) = (X(\CB) \times \CB) \cap \Gamma}$. Thus,
(\ref{prop-9-7-a}) implies (\ref{prop-9-7-b}).

It is clear that (\ref{prop-9-7-b}) implies (\ref{prop-9-7-a}).

For any algebraic subvariety $Z \subset X$, we have
\begin{equation*}
\Gamma(f|_{Z(\CB)}) = \Gamma(f) \cap (Z(\CB) \times \CB).
\end{equation*}
Therefore $f|_{Z(\CB)}$ is a rational function, provided that
(\ref{prop-9-7-a}) and (\ref{prop-9-7-b}) hold.
\end{proof}

\begin{acknowledgements}
We thank J.~Bochnak, C.~Fefferman and J.~Siciak for useful comments, and S.~Yakovenko
for making us aware of the relevance of \cite{bib8} for our project.
Partial financial support to JK   was provided  by  the NSF under grant
number DMS-1362960.
For WK, research was partially
supported by the National Science Centre (Poland) under grant number
2014/15/B/ST1/00046. Furthermore, WK acknowledges with gratitude support
and hospitality of the Max--Planck--Institut f\"ur Mathematik in Bonn.
Partial support for KK was provided by the ANR project STAAVF (France).
\end{acknowledgements}

\phantomsection
\addcontentsline{toc}{section}{\refname}


\begin{thebibliography}{99}

\bibitem{bibA1} A.~Anrdreotti and E.~Bombieri, Sugli
omeomorfismi delle varieta algebriche, Ann. Scuola Norm. Sup.
Pisa (3) 23 (1969), 431--450.

\bibitem{bibA2} A.~Andreotti and F.~Norguet, La convexit\'e
holomorphe dans l'espace analytique des cycles d'une vari\'et\'e
alg\'ebrique, Ann. Scuola Norm. Sup. Pisa (3) 21 (1967), 31--82.

\bibitem{bib1} E.~Bierstone, P.~Milman, and A.~Parusi\'nski, A~function
which is arc-analytic but not continuous, Proc. Amer. Math. Soc. 113
(1991), 419--424.

\bibitem{bib2} M.~Bilski, W.~Kucharz, A.~Valette, and G.~Valette, Vector
bundles and reguluous maps, Math. Z. 275 (2013), 403--418.

\bibitem{bib3} Z.~B\l{}ocki, Singular sets of separately analytic
functions, Ann. Polon. Math. 56 (1992), 219--225.

\bibitem{bib4} J.~Bochnak, M.~Coste, and M.-F. Roy, Real Algebraic
Geometry, Ergeb. der Math. und ihrer Grenzgeb. Folge~3, vol.~36,
Springer, 1998.

\bibitem{bib5} J.~Bochnak and J.~Siciak, Analytic functions in
topological vector spaces, mim\-e\-o\-graphed preprint, IHES Bures-sur-Yvette,
1971.

\bibitem{bib6} J.~Bochnak and J.~Siciak, Polynomials and multilinear
mappings in topological vector spaces, Studia Math. 39 (1971), 59--76.

\bibitem{bib7} J.~Bochnak and J.~Siciak, Analytic functions
in topological vector spaces, Studia Math. 39 (1971), 77--112.

\bibitem{bib8} S.~Bochner and W.~Martin, Several Complex Variables,
Princeton University Press, 1948.

\bibitem{bibA3} J.-P.~Demailly, L.~Lempert, and B.~Shiffman,
Algebraic approximations of holomorphic maps from Stein domains
to projective manifolds, Duke Math. J. 76 (1994), 333--363.

\bibitem{bib9} G.~Fichou, J.~Huisman, F.~Mangolte, and J.-Ph.~Monnier,
Fonctions r\'egulues, J.~Reine
Angew. Math. 718
(2016), 103--151.

\bibitem{bib10} G.~Fichou, J.-Ph.~Monnier, and R.~Quarez, Continuous
functions in the plane regular after one blowing-up, arXiv:1409.8223
[math.AG].

\bibitem{bibA4} J.~Koll\'ar, Singularities of the Minimal Model
Program (with a collaboration of S\'andor Kov\'acs), Cambridge
Tracts in Mathematics, vol.~200, Cambridge University Press,
Cambridge (2013).

\bibitem{bib11} J.~Koll\'ar and K.~Nowak, Continuous rational functions
on real and $p$-adic varieties, Math.~Z. 279 (2015), 85--97.

\bibitem{bib12} W.~Kucharz, Rational maps in real algebraic geometry,
Adv. Geom. 9 (2009), 517--539.

\bibitem{bib13} W.~Kucharz, Regular versus continuous rational maps,
Topology Appl. 160 (2013), 1375--1378.

\bibitem{bib14} W.~Kucharz, Approximation by continuous rational maps
into spheres, J.~Eur. Math. Soc. 16 (2014), 1555--1569.

\bibitem{bib15} W.~Kucharz, Continuous rational maps into the unit
$2$-sphere, Arch. Math. (Basel) 102 (2014), 257--261.

\bibitem{bib16} W.~Kucharz, Some conjectures on stratified-algebraic
vector bundels, J.~Singul. 12 (2015), 92--104.

\bibitem{bib17} W.~Kucharz, Continuous rational maps into spheres,
arXiv:1403.5127 [math.AG], Math. Z. DOI 10.1007/s00209-016-1639-4.

\bibitem{bib18} W.~Kucharz and K.~Kurdyka, Stratified-algebraic vector
bundles, arXiv:1308.4378 [math.AG],J.~Reine Angew. Math. (Online)
DOI: 10.1515/crelle-2015-0105, March 2016.

\bibitem{bib19} W.~Kucharz and K.~Kurdyka, Comparison of
stratified-algebraic and topological K-theory, arXiv:1511.04238
[math.AG].

\bibitem{bib20} W.~Kucharz and K.~Kurdyka, Linear equations on real
algebraic surfaces,  arXiv:1602.01986.

\bibitem{bib21} K.~Kurdyka, Points r\'eguliers d'un ensemble
sous-analytique, Ann. Inst. Fourier (Grenoble) 38 (1988), 133--156.

\bibitem{bib22} K.~Kurdyka, Ensembles semi-alg\'ebrique symetriques par
arcs, Math. Ann. 281 (1988), 445--462.

\bibitem{bib23} K.~Kurdyka, An arc-analytic function with nondiscrete
singular set, Ann. Polon. Math. 59 (1994), 251--254.

\bibitem{bib24} R.~Lazarsfeld, Positivity in Algebraic Geometry. I.
Classical Setting: Line Bundles and Linear Series, Ergeb. der Math. und
ihrer Grenzeb. Folge~3, vol.~48, Springer, 2004.

\bibitem{bib25} K.J.~Nowak, Some results of algebraic geometry over Henselian rank one
valued fields,  Selecta Math.(Online)
DOI: 10.1007/s00029-016-0245-y.

\bibitem{bib26} J.~Siciak, Singular sets of separately analytic
functions, Colloq. Math. 60/61 (1990), 281--290.

\bibitem{bib27} M.~Zieli\'nski, Homotopy properties of some real
algebraic maps,  Homology, Homotopy and Appl. 18(2016), 287-299.

\end{thebibliography}
\end{document}